\definecolor{bleuf}{rgb}{0.,0.,0.8}
\definecolor{labelkey}{rgb}{0.6,0,1}
\newcounter{corr}
\definecolor{violet}{rgb}{0.580,0.,0.827}
\newcommand{\corr}[3]{\typeout{Warning : a correction remains in page
\thepage}
				\stepcounter{corr}   \def\identite{{\mathrm{Id}}}     
				{\color{blue}\ifmmode\text{\,\sout{\ensuremath{#1}}\,}\else\sout{#1}\fi}
       {\color{red}#2}
       {\color{violet} #3}}
\newenvironment{proof}{\noindent {\sc Proof.} }{$\square$ }
\newtheorem{theorem}{Theorem}[section]
\newtheorem{definition}{Definition}[section]
\newtheorem{lemma}[theorem]{Lemma}
\newtheorem{remark}{Remark}[section]
\newcommand{\bfA}{{\boldsymbol A}}
\newcommand{\bfa}{{\boldsymbol a}}
\newcommand{\bfD}{{\boldsymbol D}}
\newcommand{\bfe}{{\boldsymbol e}}
\newcommand{\bfE}{{\boldsymbol E}}
\newcommand{\bff}{{\boldsymbol f}}
\newcommand{\bfg}{{\boldsymbol g}}
\newcommand{\bfn}{\boldsymbol n}
\newcommand{\bfT}{{\boldsymbol T}}
\newcommand{\bfu}{{\boldsymbol u}}
\newcommand{\bfv}{{\boldsymbol v}}
\newcommand{\bfV}{{\boldsymbol V}}
\newcommand{\bfVN}{\bfV_{\!\!N}}
\newcommand{\bfw}{{\boldsymbol w}}
\newcommand{\bfW}{{\boldsymbol W}}
\newcommand{\bfx}{\boldsymbol x}
\newcommand{\bfy}{\boldsymbol y}
\newcommand{\bfvarphi}{{\boldsymbol \varphi}}
\newcommand{\bfH}{{\boldsymbol H}}
\newcommand{\bfX}{\boldsymbol X}
\newcommand{\bfY}{\boldsymbol Y}
\newcommand{\bfC}{\boldsymbol C}
\newcommand{\bfZ}{\boldsymbol Z}
\newcommand{\dx}{\ \mathrm{d}\bfx}
\newcommand{\dt}{\ \mathrm{d} t}
\newcommand{\ds}{\ \mathrm{d} s}
\newcommand{\dedge}{\ \mathrm{d}\gamma(\bfx)}
\newcommand{\nstep}{N}
\newcommand{\nalgo}{n}
\newcommand{\nnn}{{n \in \xN}}
\newcommand{\nti}{n \to + \infty}
\newcommand{\Nti}{\nstep \to + \infty}
\newcommand{\deltat}{{\delta\!t}}
\newcommand{\disc}{{\mathcal D}}
\newcommand{\mesh}{{\mathcal M}}
\newcommand{\edge}{{\sigma}}
\newcommand{\edgeperp}{{\tau}}
\newcommand{\edges}{{\mathcal E}}
\newcommand{\edgesint}{{\mathcal E}_{\mathrm{int}}}
\newcommand{\edgesext}{{\mathcal E}_{\mathrm{ext}}}
\newcommand{\edgesinti}{{\mathcal E}_{\mathrm{int}}^{(i)}}
\newcommand{\edgesexti}{{\mathcal E}_{\mathrm{ext}}^{(i)}}
\newcommand{\edgesi}{{\edges\ei}}
\newcommand{\edgesj}{{\edges\ej}}
\newcommand{\edged}{\epsilon}
\newcommand{\edgesd}{{\widetilde {\edges}}}
\newcommand{\edgesdi}{{\edgesd^{(i)}}}
\newcommand{\edgesdinti}{{\edgesd^{(i)}_{{\rm int}}}}
\newcommand{\edgesdexti}{{\edgesd^{(i)}_{{\rm ext}}}}
\newcommand{\ei}{^{(i)}}
\newcommand{\ej}{^{(j)}}
\newcommand{\Hmesh}{\bfH_{\! N}}
\newcommand{\Hmeshzero}{\bfH_{\! N,0}}
\newcommand{\Hmeshmzero}{\bfH_{\edges_m,0}}
\newcommand{\Hmeshnzero}{\bfH_{\edges_n,0}}
\newcommand{\HmeshNzero}{\bfH_{\! N,0}}
\newcommand{\Hmeshi}{H_N^{(i)}}
\newcommand{\Hmeshij}{H_{\edgesd^{(i,j)}}}
\newcommand{\Hmeshizero}{H_{\edges^{(i)},0}}
\newcommand{\HmeshiN}{H_{N}^{(i)}}
\newcommand{\HmeshiNzero}{H_{N,0}^{(i)}}
\newcommand{\dive}{{\mathrm{div}}}
\newcommand{\gradi}{\boldsymbol \nabla}
\newcommand{\xN}{\mathbb{N}}
\newcommand{\xR}{\mathbb{R}}
\newcommand{\ie}{{\em i.e.}}
\newcommand{\blist}{\begin{list}{-}{\itemsep=0.5ex \topsep=0.5ex \leftmargin=1.cm \labelwidth=0.3cm \labelsep=0.5cm \itemindent=0.cm}}
\newcommand{\Ent}[1]{{\lfloor #1 \rfloor}}
\newcommand{\Char}{{\mathds 1}}
\newcommand{\exm}{^{(m)}}
\newcounter{cst}
\newcommand{\ctel}[1]{C_{\refstepcounter{cst}\label{#1}\thecst}}
\newcommand{\cter}[1]{C_{\ref{#1}}}
\author{ R. Eymard and D. Maltese\thanks{Universit\'e Gustave Eiffel, LAMA, (UMR 8050), UPEM, UPEC, CNRS, F-77454, Marne-la-Vallée (France), {\tt
robert.eymard, david.maltese@univ-eiffel.fr}}}
\title{Convergence of the incremental projection method \\ using conforming approximations}
\begin{document}

\maketitle

{\bf Keywords:} {Incompressible Navier-Stokes equations, incremental projection scheme, conforming scheme, convergence analysis}

\begin{abstract}
We prove the convergence of an incremental projection numerical scheme for the time-dependent incompressible Navier--Stokes equations, without any regularity assumption on the weak solution. 
The velocity and the pressure are discretized in conforming spaces, whose compatibility is ensured by the existence of an interpolator for regular functions which preserves approximate divergence free properties. 
Owing to a priori estimates, we get the existence and uniqueness of the discrete approximation. Compactness properties are then proved, relying on a Lions-like lemma for time translate estimates. It is then possible to show the convergence of the approximate solution to a weak solution of the problem. The construction of the interpolator is detailed in the case of the lowest degree Taylor-Hood finite element.
\end{abstract}

%
%
\section{Introduction}
The Navier–Stokes equations for a homogeneous incompressible fluid can be written in a strong form as:
\begin{subequations} \label{pb:cont}
\begin{align}
 \label{qdm} &
 \partial_t \bfu + (\bfu \cdot \gradi)\bfu -  \Delta \bfu +\nabla p = \bff~\text{in}~(0, T) \times \Omega,
\\ \label{inc} &
\dive \bfu=0~\text{in}~(0, T) \times \Omega,
\end{align}
\end{subequations}
where the density and the viscosity are set to one for the sake of simplicity, and  where
\begin{equation}\label{hyp:T-Omega}
    \begin{array}{l}
        T>0, \mbox{ and }\Omega  \mbox{ is a connected, open and bounded subset of }\ \xR^d, \; d\in \{2, 3\},\\  \mbox{with a Lipschitz boundary } \partial \Omega.
    \end{array}
\end{equation}
The variables $\bfu$ and $p$ are respectively the velocity and the pressure of the fluid, and \eqref{qdm} and \eqref{inc} respectively model the momentum conservation and the mass conservation of an incompressible fluid. 
This system is supplemented with the homogeneous Dirichlet boundary condition 
\begin{equation}
\bfu=0~\text{on}~ (0,T) \times \partial \Omega,  \label{boundary}
\end{equation}%
and the initial condition 
\begin{equation}
\bfu(0)=\bfu_0~\text{in}~  \Omega.  \label{init}
\end{equation} 
The function $\bfu_0$ is the initial datum for the velocity and the function $\bff$ is the source term.

While this system of equations is coupling the velocity and the pressure, projection numerical schemes, introduced in  \cite{Chorin1969OnTC} and \cite{Temam1969SurLD}, enable the successive resolution of decoupled elliptic equations for the velocity and the pressure. This leads to cheaper and smaller computations than those issued from a coupled approximation.
Nevertheless, the projection method as originally presented has some drawbacks: the precision is limited for the $L^2$ norm of the pressure and for the $H^1$ norm of the velocity.
This limited precision  is a consequence of the artificial boundary condition for the pressure.
The use of a higher-order time discretization does not improve the convergence rate. We refer to  \cite{Rannacher1992OnCP} for error estimates for the semi-discrete case (see also \cite{Shen1992OnEE} and \cite{Shen1994RemarksOT}).
Different choices of the pressure boundary condition have been discussed to improve the efficiency of this method. The first major improvement of the original projection scheme was proposed in \cite{Goda1979AMT} (see also \cite{Kan1986ASA}) and now referred to (following \cite{Guermond2006AnOO}) as  incremental projection schemes. Such schemes can be written as follows in the case of a continuous-in-space discrete-in-time scheme:
\begin{itemize}
\item \emph{Prediction step}~: Find a function $\tilde \bfu^{n+1} \in H_0^1(\Omega)^d$, which is therefore regular in space and respects the boundary conditions but is not divergence-free, such that the following linearized momentum equation holds in the homogeneous Dirichlet weak sense:
\begin{multline} \label{pred2}
\frac{1}{\delta\!t}(\tilde \bfu^{n+1} - \bfu^n)+ \\ (\tilde \bfu^n \cdot \gradi)\tilde \bfu^{n+1} + \frac{1}{2} \dive \tilde \bfu^n \ \tilde \bfu^{n+1}
-  \Delta \tilde \bfu^{n+1} + \nabla p^n  = \bff^{n+1}~\text{in}~ \Omega.
\end{multline}
This step involves $d$ decoupled resolution of elliptic problems for each of the components of the velocity.
\item \emph{Correction step}~: Find the new pressure field $p^{n+1} \in H^1(\Omega)\cap L^2_0(\Omega)$ (denoting by $L^2_0(\Omega)$  the space of functions of $L^2(\Omega)$ with null average on $\Omega$), and a corrected velocity $ \bfu^{n+1} \in \bfV(\Omega)$ (denoting by $\bfV(\Omega)$ the space of $L^2$-divergence-free functions and null normal trace, precisely defined by \eqref{def:V}) therefore with a lower regularity in space and a weaker boundary condition, such that, in the homogeneous Neumann weak sense for the unknown $p^{n+1} -p^n\in H^1(\Omega)\cap L^2_0(\Omega)$,
\begin{subequations} \label{cor2}
\begin{align}
  &
 \frac{1}{\delta\!t}( \bfu^{n+1} - \tilde \bfu^{n+1}) +   \nabla (p^{n+1} -p^n) = 0~\text{in}~ \Omega,
\\  &
\dive  \bfu^{n+1}=0~\text{in}~ \Omega.
\end{align}
\end{subequations}

This step involves the resolution of an elliptic problem for finding the new pressure.
\end{itemize}
Scheme \eqref{pred2}-\eqref{cor2} is called an ``incremental projection scheme'', since it is obtained by introducing the previous pressure gradient in the prediction step and by solving the increment of the pressure in the correction step, which is a projection step of the predicted velocity on the divergence-free functions. Such a scheme seems to be much more efficient from a computational point of view than non-incremental projection schemes and has been the object of several error analyses, under some regularity assumptions on the solution of the continuous problem in the semi-discrete setting, see  \cite{Shen1992OnEE} and \cite{Shen1994RemarksOT}.
The incremental schemes have been the object of some works in the fully discrete setting (see  \cite{Guermond1998OnTA}, \cite{Guermond1998OnSA}, \cite{Guermond1996SomeIO}, \cite{Guermond1997CalculationOI}). For more details concerning projection methods we refer to \cite{Guermond2006AnOO} and references therein.

Some recent papers \cite{Kuroki2020OnCO,gal2022con} propose convergence proofs for fully discrete projection schemes. In \cite{gal2022con}, the incremental projection scheme is considered without any assumption of regularity assumptions on the exact solution. The proof of its convergence is done both for the semi-discrete scheme and for a fully discrete scheme using the Marker-And-Cell scheme (introduced in the seminal paper \cite{Harlow1965NumericalCO}). 
In this paper, we extend such a proof to the case where the spatial discretization is done using conforming methods (in most of the cases, finite element methods or spectral methods). 

For any integer $N\ge 1$, we define the time step by $\deltat_N = T/N$ and we consider approximation spaces $\bfX_{\!N} \subset H_0^1(\Omega)^d$ for the predicted velocity, $M_N\subset H^1(\Omega)\cap L^2_0(\Omega)$ for the approximate pressure. Then the space $\bfVN $, defined as the space of functions which are $L^2$-orthogonal to the gradient of the elements of $M_N$ (it is therefore not a subset of $\bfV(\Omega)$) is used for the approximation of the corrected velocity.  The fully discrete incremental projection scheme is given in Section \ref{sec:fulldisprojsch}, and is shown to have a unique solution $(\tilde \bfu_N^{\nalgo},p_N^{n},\bfu_{N}^{n})_{n=1,\ldots,N}\in (\bfX_{\!N}\times M_N\times \bfVN)^N$.

In order to prove the convergence of the incremental projection scheme, we need to prove the compactness of the sequence of the approximate velocities in $L^2$. The main difficulty relies in the time translate estimates; this difficulty is solved in the coupled case by the famous Lions' Lemma \cite[Lemme 5.1 p.58]{lions1969method}, bounding the norm in $L^2$ by a combination of the norm in $H^1$ and of a semi-norm similar to a $H^{-1}$ norm. In the case of the incremental projection method, the difficulty relies in the fact that there are two different approximations for the velocities. We handle this difficulty in Section \ref{sec:timetrans}, by closely following the method first introduced in \cite{gal2022con}. We can then conclude the convergence proof to a solution, considered in the weak sense given by Definition \ref{def:weaksol}, in Section \ref{sec:convweaksol}.

It is noticeable that, for this existence and uniqueness result, no compatibility condition between $M_N$ and $\bfX_{\!N}$ is required. This is no longer the case in Section \ref{sec:conv}, where the convergence of the method is proved. The compatibility condition is expressed through the existence of an interpolator $\Pi_N  : \bfW \to \bfX_N\cap \bfVN\cap L^\infty(\Omega)^d$, whose purpose is to build approximate values in the discrete velocity spaces  of some regular functions. This interpolator replaces in this paper a Fortin interpolator (see \cite{Scott1990FiniteEI,ern2017fin,Ern2021FiniteEI,gal2012fortin} for more properties on such operator). In the present paper, the requested properties are slightly different (see Section \ref{sec:cvhyp}):
\begin{itemize}
 \item The set $\bfW$  is a dense subset  of the set $H^1_0(\Omega)^d\cap \bfV(\Omega)$ for the $H^1_0(\Omega)^d$ norm (see \eqref{hyp:PNexist}).
 \item For any $\bfvarphi\in\bfW$, the convergence of $\Pi_N\bfvarphi$ to $\bfvarphi$ is assumed to hold in $H^1_0(\Omega)^d$, while the $L^\infty$ norm of $\Pi_N\bfvarphi$ remains bounded (see \eqref{hyp:PNconv}). 
\end{itemize}
So the hypotheses which are given on the space discretization in this paper are not shown to be equivalent to an inf-sup property. In Section \ref{sec:taylorhood}, we consider the example of the lowest degree Taylor-Hood finite element. In this section we provide all the calculations that prove the convergence property which is requested on $\Pi_N$, since the properties which are studied in the literature are generally different. We show that constructing $\Pi_N$ and checking its properties is much simplified by the fact that we only need to apply it on regular divergence free functions with compact support. It is in some way simpler than the verification of the inf-sup condition for these spaces (we provide in this section simple examples where there is no inf-sup stability with the lowest degree Taylor-Hood finite element, but where $\Pi_N$ is nevertheless defined). Note that the study done in this paper also applies to the semi-discrete scheme (see Remark \ref{rem:semidiscrete}). 
We refer to \cite{gal2022con} for the convergence of the semi-discrete scheme using another discretization of the convective term.

Throughout the paper, we shall assume that the data $\bff$ and $\bfu_0$ satisfy
\begin{equation}\label{hyp:f-u0}
\bff \in L^2(0,T;H^{-1}(\Omega)^d)  \mbox{ and }\bfu_0 \in L^2(\Omega)^d.
\end{equation}
We denote in the whole paper
\begin{equation}\label{eq:pdtscaldd}
 \forall \bfv,\bfw\in L^2(\Omega)^d,\ (\bfv,\bfw) := \int_\Omega \bfv(\bfx)\cdot\bfw(\bfx)\dx
\end{equation}
and
\begin{equation}\label{eq:pdtduality}
  \forall \bfg\in H^{-1}(\Omega)^d,\ \forall \bfv\in H^1_0(\Omega)^d,\ \langle\bfg,\bfv\rangle := \bfg(\bfv).
\end{equation}

We recall that we denote  by $L^2_0(\Omega)$ the space of $L^2$ functions with null average on $\Omega$, which can be defined by
  \begin{equation}\label{def:Ldz}
L^2_0(\Omega)= \{ q \in L^2(\Omega)~\text{such that}~\int_\Omega q(\bfx) \dx=0\},
\end{equation}
and by $\bfV(\Omega)$ the space of $L^2$-divergence-free functions, which can be defined by
  \begin{equation}\label{def:V}
\bfV(\Omega)= \{ \bfu \in L^2(\Omega)^d~\text{such that}~( \bfu , \nabla\xi ) =0~\text{for any}~\xi \in H^1(\Omega)\}.
\end{equation}
Recall that $\bfV(\Omega)$ (sometimes also denoted by $L^2_\sigma(\Omega)$) is the closure in $L^2(\Omega)$ of $C^\infty_{c,\sigma}(\Omega) := \{\bfv\in C^\infty_c(\Omega)^d;\dive\bfv = 0\}$ 
and that we can write as well \cite[Eq. 3.5.13]{sohr2001nav}, \cite[Definition IV.3.2 p. 248]{BoyerFabrie-book}
\[
 \bfV(\Omega)= \{ \bfu \in H_{\rm div}(\Omega)~\text{such that}~\dive \bfu = 0 \text{ and }\gamma_\nu \bfu = 0\},
\]
denoting by $\gamma_\nu$ the normal trace of $\bfu$ on $\partial\Omega$.
Then $H_0^1(\Omega)^d\cap\bfV(\Omega)$  (sometimes also denoted by $W^{1,2}_{0,\sigma}(\Omega)$) is the subset of $H_0^1(\Omega)^d$ of divergence-free functions, and is the closure in $H^1(\Omega)$ of $C^\infty_{c,\sigma}(\Omega)$  \cite[p.157]{sohr2001nav}, \cite[Lemma IV.3.4 p.249]{BoyerFabrie-book}.
Let us define the weak solutions of Problem  \eqref{pb:cont}-\eqref{init} in the sense of Leray \cite{leray1934}.
\begin{definition}[Weak solution]\label{def:weaksol}
Under the assumptions \eqref{hyp:T-Omega} and \eqref{hyp:f-u0}, a function $\bfu \in L^2(0,T;H_0^1(\Omega)^d\cap\bfV(\Omega))$ $\cap$ $L^\infty(0,T;L^2(\Omega)^d)$ is a weak solution of the  problem \eqref{pb:cont}-\eqref{init} if 
\begin{multline} \label{weaksol}
- \int_0^T \!\!(\bfu, \partial_t \bfv) \dt + \int_0^T \!\!((\bfu \cdot \gradi) \bfu,  \bfv) \dt   + \int_0^T \!\!\int_\Omega \gradi \bfu  : \gradi \bfv \dx \dt \\
= (\bfu_0 , \bfv(0,\cdot) ) +  \int_0^T \!\!\langle \bff , \bfv\rangle \dt
\end{multline}
for any  $\bfv\in C_c^\infty (\Omega \times [0,T))^d$, such that $\dive \bfv=0$ a.e. in $\Omega \times (0,T)$.
\end{definition}

\section{The projection scheme using a conforming method}\label{sec:fulldisprojsch}

\subsection{Space and time discretizations}
We consider a partition of the time interval $[0,T]$, which we suppose uniform to alleviate the notations, so that the assumptions read (omitting in the whole paper to recall that $N$ is assumed to be an integer):
\begin{equation}\label{hyp:timediscfinite}
  N\ge 1, \qquad \deltat_{\! N} = \frac T N, \qquad t_N^n  = n\,\deltat_{\! N} \mbox{ for }  n \in \llbracket 0,N\rrbracket.\end{equation}
 We consider a sequence of velocity-pressure approximations in $H_0^1(\Omega)^d$ and $H^1(\Omega) \cap L^2_0(\Omega)$ respectively.
For the approximation of the predicted velocity, let
\begin{equation}\label{hyp:X}
  \begin{array}{l}
\mbox{ $(\bfX_{\!N}  )_{N \ge 1}$ be a sequence of closed subspaces of $H^1_0(\Omega)^d$.}
  \end{array}
\end{equation}
For the approximation of the pressure, let  
\begin{equation}\label{hyp:M}
  \begin{array}{l}
\mbox{ $(M_N )_{N \ge 1}$ be a  sequence of closed subspaces of $H^1(\Omega) \cap L^2_0(\Omega)$.}
\end{array}
\end{equation}
For the approximation of the corrected velocity
we define the space of weakly divergence free functions by
\begin{equation}\label{eq:defVN}
\bfVN    = \{ \bfv \in L^2(\Omega)^d~\text{such that}~( \bfv ,\nabla q )= 0~\text{for any}~q \in M_N \}.
\end{equation}

We denote by $\mathcal{P}_{\bfVN   } : L^2(\Omega)^d \to \bfVN   $ the orthogonal projection in $L^2(\Omega)^d$ onto the space $\bfVN   $. We denote by $\mathcal{P}_{\bfV(\Omega)} : L^2(\Omega)^d \to \bfV(\Omega)$ the orthogonal projection in $L^2(\Omega)^d$ onto the space $\bfV(\Omega)$.

\begin{remark}\label{rem:semidiscretezero} The choice $\bfX_{\!N} = H^1_0(\Omega)^d$, $M_N = H^1(\Omega) \cap L^2_0(\Omega)$ (which implies $\bfVN  = \bfV(\Omega)$) yields the continuous-in-space discrete-in-time scheme. 
\end{remark}

\subsection{The projection scheme}
 Let 
$a : H_0^1(\Omega)^d \times H_0^1(\Omega)^d \to \mathbb{R}$ be the coercive bilinear form defined by
\[
 a(\bfu,\bfv) = \int_\Omega \gradi \bfu : \gradi \bfv \dx,
\]
and let $b : H_0^1(\Omega)^d  \times H_0^1(\Omega)^d \times H_0^1(\Omega)^d \to \mathbb{R} $  be the ``skew-symmetric'' trilinear form defined by
\[
b(\bfu,\bfv,\bfw) =  \frac{1}{2} \int_\Omega \Big((\bfu \cdot \gradi) \bfv \cdot \bfw - (\bfu \cdot \gradi) \bfw \cdot \bfv\Big) \dx. 
\]
Note that we have for any $(\bfu,\bfv,\bfw) \in  H_0^1(\Omega)^d  \times H_0^1(\Omega)^d \times H_0^1(\Omega)^d $ 
\begin{equation}\label{eq:identityconv}
b(\bfu,\bfv,\bfw)  =\int_\Omega (\bfu \cdot \gradi) \bfv \cdot \bfw \dx + \frac{1}{2} \int_\Omega \dive \bfu ( \bfv \cdot \bfw) \dx.
\end{equation}
Note that the fact that $b$ is continuous over  $H_0^1(\Omega)^d  \times H_0^1(\Omega)^d \times H_0^1(\Omega)^d$ is a consequence of the Sobolev continuous embedding of $H_0^1(\Omega)$ in $L^4(\Omega)$ for $d \in \{2,3\}$.
Using the above definitions, the first order time fully discrete incremental projection scheme reads:  
\begin{subequations}\label{eq:semidiscretefinite}  
\begin{align} 
 & \mbox{\emph{Initialization:}} \nonumber \\
 &\hspace{2ex} \mbox{Let}~ \bfu_N^0 = \mathcal{P}_{\bfVN} \bfu_0,~\tilde \bfu_N^n=0 ~\text{and}~ p_N^0 =0. \label{eq:semidisc-finiteinit}\\[2ex]
 &\mbox{Solve for } 0 \le n \le N-1: \nonumber \\
 & \hspace{2ex}\mbox{\emph{Prediction step:}} ~\mbox{Find $\bfu_N^{\nalgo+1} \in \bfX_{\!N}$ such that }  \nonumber \\
  & \label{eq:finitepre}  \hspace{2ex}
\frac{1}{\deltat_{\! N}} \Big( (\tilde{\bfu}_N^{n+1},\bfv) - (\bfu_N^\nalgo,\bfv)\Big) +b( \tilde \bfu_N^n,\tilde \bfu_N^{n+1},\bfv)
\\[1ex] \nonumber 
& \hspace{1ex} +a(\tilde{\bfu}_N^{n+1},\bfv) +(\nabla p_N^n,\bfv) = \langle\bff_{\! N}^{n+1},\bfv\rangle,~\text{for any}~\bfv \in \bfX_{\!N}  ,\\[1ex] \nonumber 
  &\hspace{2ex} \mbox{\emph{Correction step 1:}}~\mbox{Find $p_N^{\nalgo+1}  \in  M_{\!N}$ such that }  \nonumber \\ 
  \label{eq:finitecor}  
&\hspace{2ex} 
(\nabla (p_N^{n+1}-p_N^n),\nabla q) = \frac{1}{\delta\!t_N}( \tilde \bfu_N^{n+1},\nabla q),~\text{for any}~q \in M_N(\Omega).
  \\[1ex] \nonumber 
  &\hspace{2ex} \mbox{\emph{Correction step 2:}}~\mbox{Define $\bfu_N^{\nalgo+1} \in L^2(\Omega)^d$ as follows }  \nonumber \\ 
  \label{eq:finitecor2}  
&\hspace{2ex} 
\bfu_N^{n+1} = \tilde \bfu_N^{n+1} - {\delta\!t_N} \nabla (p_N^{n+1} -p_N^n).
\end{align}
\end{subequations}
The term $\bff_{\! N}^{n+1} \in H^{-1}(\Omega)^d$ is given by 
$  \bff_{\! N}^{n+1}=\dfrac 1 {\deltat_{\! N}}\int_{t^n}^{t^{n+1}} \bff(t)  \dt.$
\begin{remark}\label{rem:rembothfields}
Note that $\bfu_N^{n+1}$ belongs to $\bfX_N + \nabla M_N\subset L^2(\Omega)^d$.
We refer to \cite{Guermond1996SomeIO} for other discretizations of the projection step. Multiplying \eqref{eq:finitecor2} by the gradient of an element of $M_N$ and using \eqref{eq:finitecor2} we obtain for any $n \in \llbracket 0,N-1 \rrbracket$
\begin{equation}\label{eq:zerodiv}
(\bfu_N^{n+1},\nabla q)=0~\text{for any}~q\in M_N,
\end{equation}
which means that $\bfu_N^{n+1}\in\bfVN $. Hence, as stated in the introduction, $\bfu_N$ satisfies the discrete divergence free constraint, but not the regularity one. We therefore obtain a discrete Helmholtz-Leray decomposition of the function  $\tilde \bfu_N^{n+1}$ by writing $\tilde \bfu_N^{n+1} = \bfu_N^{n+1} + \delta\!t_N \nabla(p_N^{n+1} - p_N^n)$. Although $\tilde \bfu_N$ safisfies the regularity constraint and the boundary conditions, but not the discrete divergence free constraint, Theorem \ref{thm:cvscheme} proves that both approximate fields converge to the same weak solution of the continuous problem, simultaneously satisfying all these constraints.
\end{remark}
\begin{remark}
 Note that the numerical method can be expressed without using the sequence  of corrected velocity $(\bfu_N^n)_{n \in \llbracket 0,N\rrbracket }$. Letting $p_N^0 = 0$, the predicted velocity is such that
 $$
 \frac{1}{\deltat_{\! N}} \Big( (\tilde{\bfu}_N^{1},\bfv) - (\mathcal{P}_{\bfVN} \bfu_0,\bfv)\Big) + a(\tilde \bfu_N^1,\bfv)= \langle\bff_{\! N}^{1},\bfv\rangle,~\text{for any}~\bfv \in \bfX_{\!N}, 
 $$
and, for $n \in \llbracket 1,N-1 \rrbracket$,
 \begin{multline*}
 \frac{1}{\deltat_{\! N}} \Big( (\tilde{\bfu}_N^{n+1},\bfv) - (\tilde \bfu_N^n,\bfv)\Big) + b( \tilde \bfu_N^n,\tilde \bfu_N^{n+1},\bfv) \\
 +a(\tilde \bfu_N^{n+1},\bfv) + (\nabla (2 p_N^n - p_N^{n-1}),\bfv) = \langle\bff_{\! N}^{n+1},\bfv\rangle,~\text{for any}~\bfv \in \bfX_{\!N}.
 \end{multline*}
The pressure $p_N^{n+1} \in M_N$ is such that 
\begin{equation*}
(\nabla (p_N^{n+1}-p_N^n),\nabla q) = \frac{1}{{\delta\!t_N}}( \tilde \bfu_N^{n+1},\nabla q)~\text{for any}~q\in M_N.
\end{equation*}
\end{remark}
\begin{remark}
The choice for the initial conditions $\tilde \bfu_N^0$ and $p_N^0$ plays no significant role for the convergence study. It suffices to consider any sequences $(\tilde \bfu_N^0)_{N \ge 1}$ bounded in $H^1(\Omega)^d$ and  $(\tilde p_N^0)_{N \ge 1}$ bounded in $H^1(\Omega)$.
\end{remark}

\subsection{Existence of a solution to the projection scheme}

The following existence result allows to define the approximate solutions obtained by the projection scheme \eqref{eq:semidiscretefinite}.
\begin{lemma}[Approximate solutions] \label{lem:exist-finite}
 Under Assumptions \eqref{hyp:T-Omega}, \eqref{hyp:f-u0}, \eqref{hyp:timediscfinite}, \eqref{hyp:X}, \eqref{hyp:M}, for any $N\ge 1$,
 there exists a unique sequence $(\tilde \bfu_N^{\nalgo},\bfu_N^{\nalgo},$ $p_N^{\nalgo})_{\nalgo \in \llbracket 1,N \rrbracket} \subset$  $\bfX_{\!N}  \times \bfVN  $ $ \times M_N $ satisfying \eqref{eq:semidiscretefinite}. 
 We then define the functions  $\bfu_\nstep : (0,T) \to \bfVN   $ and $\tilde \bfu_\nstep: (0,T) \to \bfX_{\!N}  $ by
\begin{equation}\label{eqdef:fullfunctions-finite}
\bfu_\nstep (t) = \sum_{n=0}^{\nstep-1} \ \mathds 1_{(t^\nalgo_\nstep,t^{\nalgo+1}_\nstep]}(t)\bfu^{\nalgo}_\nstep,\quad \quad
\tilde \bfu_\nstep (t) = \sum_{n=0}^{N-1} \  \mathds 1_{(t^\nalgo_\nstep,t^{n+1}_\nstep]}(t) \tilde \bfu^{\nalgo+1}_\nstep.
\end{equation}
 \end{lemma}

 \begin{proof}
  For given functions $p_N^\nalgo\in M_N $, $\tilde \bfu_N^{\nalgo}\in \bfX_{\!N}  $ and $\bfu_N^\nalgo\in \bfVN   $, Problem \eqref{eq:finitepre} is under the form:
  \[
   \tilde \bfu_N^{\nalgo+1}\in \bfX_{\!N}  ,\ \forall \bfv\in \bfX_{\!N}  ,\ a_N^\nalgo(\bfu_N^{\nalgo+1},\bfv) = \langle \bfg_N^n, \bfv\rangle,
  \]
  where we define the bilinear form $a_N^\nalgo:H_0^1(\Omega)^d \times H_0^1(\Omega)^d\to\mathbb{R}$,
  $(\bfu,\bfv)\mapsto \frac{1}{\deltat_{\! N}} (\bfu,\bfv)  +b( \tilde \bfu_N^n,\bfu,\bfv) + a(\bfu,\bfv)$, and where $\bfg_N^n\in H^{-1}(\Omega)^d$. The continuity of this bilinear form is a consequence of that of $b$.   
  Owing to the property $b(\cdot,\bfv,\bfv) = 0$, this continuous bilinear form is such that
  \[
   \forall \bfv\in \bfX_{\!N}  ,\ a_N^\nalgo(\bfv,\bfv) \ge \Vert \bfv\Vert_{H^1_0(\Omega)^d}^2.
  \]
Hence the existence and uniqueness of the solution $\tilde \bfu_N^{\nalgo+1}\in \bfX_{\!N}  $ (which is closed by assumption) is given by the Lax-Milgram theorem.
  
 For given functions  $p_N^\nalgo\in M_N$, $\tilde{\bfu}_N^{n+1}\in \bfX_{\!N}$, Problem \eqref{eq:finitecor} is under the form:
  \[
  p_N^{\nalgo+1}\in M_N ,\ \forall q \in M_N ,\ (\nabla p_N^{n+1}, \nabla q) = c_N(q).
  \]
  Hence the existence and uniqueness of the solution $p_N^{\nalgo+1}\in M_N $ (again closed by assumption) is given by the fact that the bilinear form $(p,q)\mapsto (\nabla p, \nabla q)$ is coercive on $H^1(\Omega) \cap L_0^2(\Omega)$. 
 \end{proof}

\section{Convergence study}\label{sec:conv}

When the Stokes problem is formulated as a coupled mixed problem whose unknowns are the velocity and the pressure, the approximation spaces used in the case of a coupled scheme must satisfy a compatibility condition (called the inf–sup condition) to obtain existence and uniqueness of a solution (see \cite{brezzifortin}). Although such condition is not needed for proving Lemma \ref{lem:exist-finite}, the numerical simulations presented in \cite{Guermond1998OnSA} suggest that some compatibility conditions are necessary to obtain satisfactory convergence properties. We present in Section \ref{sec:cvhyp} some sufficient compatibility conditions, which are not equivalent to the standard inf-sup condition, as detailed in Section \ref{sec:taylorhood}.

\subsection{Convergence assumptions}\label{sec:cvhyp}

The following assumptions are done on the sequences  $(\bfX_{\!N}  )_{N\ge 1}$ and $(M_N )_{N\ge 1}$ in order to prove the convergence of the projection scheme as $N\to\infty$. These assumptions, provided in terms of the existence of a family  $(\Pi_N)_{N\ge 1}$ of interpolators on some spaces, replace the standard uniform inf-sup hypothesis.
We assume that
\begin{equation}\label{hyp:PNexist}
  \begin{array}{l}
\mbox{ $\bfW$ is a dense subset of $ \bfV(\Omega)\cap H^1_0(\Omega)^d$ for the norm of $H^1_0(\Omega)^d$. } 
\end{array}
\end{equation}
We define
\begin{equation}\label{eq:defbfE}
 \forall N\ge 1,\ \bfE_N = \bfX_{\!N} \cap \bfVN\cap L^\infty(\Omega)^d,
\end{equation}
\begin{equation}\label{eq:defnormeE}
 \forall N\ge 1,\ \forall \bfu\in \bfE_N, \ \Vert \bfu\Vert_\bfE = \Vert\gradi\bfu\Vert_{L^2(\Omega)^{d\times d}} + \Vert\bfu\Vert_{L^\infty(\Omega)^{d}}.
\end{equation}
We assume that
\begin{equation}\label{hyp:PNconv}
  \begin{array}{l}
\mbox{ For any $N \ge 1$ there exists a mapping $\Pi_N : \bfW \to \bfE_N  $ such that,} \\
 \mbox{for any $\bfvarphi\in  \bfW $,   $(\Pi_N\bfvarphi )_{N \ge 1}$ converges to  $\bfvarphi$ in $H^1_0(\Omega)^d$  } \\
 \mbox{and the sequence  $(\Vert \Pi_N\bfvarphi\Vert_\bfE )_{N \ge 1}$ is bounded.} \\
  \end{array}
\end{equation}

 The sequence $(M_N )_{N\ge 1}$ is assumed to be such that
 \begin{equation}\label{hyp:Mlim}
  \begin{array}{l}
 \mbox{  $(\mathcal{P}_{M_N }(p))_{N \ge 1}$ converges to $p$ in $H^1(\Omega)$, } \\
 \mbox{for any $p \in H^1(\Omega) \cap L^2_0(\Omega)$,}
  \end{array}
\end{equation}
where we denote by $\mathcal{P}_{M_{N} } : H^1(\Omega) \cap L^2_0(\Omega) \to M_{N} $  the orthogonal projection  onto the space $M_{N} $ in the Hilbert space $H^1(\Omega) \cap L_0^2(\Omega)$  equipped with the scalar product $(p,q)\mapsto (\nabla p, \nabla q)$. 

\begin{remark}
 The density of $C^\infty_c(\Omega)^d\cap\bfV(\Omega)$ in  $\bfV(\Omega)$ for the $L^2(\Omega)^d$ norm is proved in particular in \cite[Lemma IV.3.5 p. 249]{BoyerFabrie-book}. Since the density of $\bfW$ in  $H^1_0(\Omega)^d\cap \bfV(\Omega)$ enables to approximate any element of $C^\infty_c(\Omega)^d\cap\bfV(\Omega)$ as closely as desired  for the $L^2(\Omega)^d$ norm, we get that $\bfW$  is dense as well in  $\bfV(\Omega)$ for the $L^2(\Omega)^d$ norm. 
\end{remark}

\begin{remark}\label{rem:semidiscrete} If, for all $N\ge 1$, we consider as in Remark \ref{rem:semidiscretezero} the semi-discrete case $\bfX_N = H^1_0(\Omega)^d$, $M_N = H^1(\Omega) \cap L^2_0(\Omega)$ (recall that this yields $\bfVN  = \bfV(\Omega)$), we let  $\bfW= \bfV(\Omega)\cap C^\infty_c(\Omega)^d  $ and $\Pi_N = {\rm Id}$. Then all the assumptions of this section are satisfied.
\end{remark}
\begin{remark}
 We provide in Section \ref{sec:taylorhood} the construction of $\Pi_N$ in the case of the lowest degree Taylor-Hood finite element. We show in Theorem \ref{thm:hyppinok} that it satisfies the assumptions given in this section, for a regular family of meshes in the standard sense.
\end{remark}

\subsection{Space estimate}

\begin{lemma}\label{lem:estfinite}  
Under Assumptions \eqref{hyp:T-Omega}, \eqref{hyp:f-u0}, \eqref{hyp:timediscfinite}, \eqref{hyp:X}, \eqref{hyp:M}, for any $N\ge 1$,
the following relation holds for $ n \in \llbracket 0,N-1 \rrbracket$:
\begin{multline}\label{eq:local_u_estsemidis}
  \frac 1 {2 \delta\!t_N} \left(\Vert\bfu^{\nalgo+1}_\nstep\Vert_{L^2(\Omega)^d}^2 - \Vert\bfu_\nstep^{\nalgo}\Vert_{L^2(\Omega)^d}^2\right) + \frac{\delta\!t_N}{2} \left(\Vert \nabla p_\nstep^{n+1}\Vert_{L^2(\Omega)^d}^2 - \Vert \nabla p_\nstep^{n}\Vert_{L^2(\Omega)^d}^2\right)  \\+   \frac 1 {2 \delta\!t_N} \Vert\tilde\bfu_\nstep^{\nalgo+1} - \bfu_\nstep^{\nalgo}\Vert_{L^2(\Omega)^d}^2 
   + \Vert  \tilde\bfu_N^{\nalgo+1}\Vert_{H^1_0(\Omega)^d}^2 = \langle \bff_N^{n+1} , \tilde \bfu_\nstep^{\nalgo+1} \rangle.
\end{multline}
Consequently, there exists $\ctel{cste:est}$ depending only on $|\Omega|$, on the $L^2(\Omega)^d$-norm of $\bfu_0$ and on the $L^2(0,T;H^{-1}(\Omega)^d)$-norm of $\bff$ such that the functions $\tilde \bfu_\nstep$ and $\bfu_\nstep$ defined by \eqref{eqdef:fullfunctions-finite} satisfy:
\begin{equation} \label{eq:tildeuu_estfinite} 
\|\tilde \bfu_\nstep\|_{L^2(0,T;H^1_0(\Omega)^d)} \leq \cter{cste:est} \quad \mbox{and } \quad
\|\bfu_\nstep\|_{L^{\infty}(0,T;L^{2}(\Omega)^{d})} 
 \leq \cter{cste:est}.
\end{equation}
\begin{equation}\label{eq:difftildeuu_estfinite}
\|  \tilde \bfu_\nstep \|_{L^\infty(0,T;L^2(\Omega)^d)}
\le \cter{cste:est} 
\quad \mbox{and } \quad
\| \bfu_\nstep - \tilde \bfu_\nstep \|_{L^2(0,T;L^2(\Omega)^d)}
\le \cter{cste:est} \sqrt{\delta\!t_N}.
\end{equation}
\end{lemma}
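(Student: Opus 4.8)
The plan is the standard discrete energy argument. To derive the local identity \eqref{eq:local_u_estsemidis}, fix $n\in\llbracket 0,N-1\rrbracket$ and test the prediction equation \eqref{eq:finitepre} with $\bfv=\tilde\bfu_N^{n+1}\in\bfX_{\!N}$. The convective term disappears since $b(\cdot,\bfw,\bfw)=0$, and the diffusion term equals $\|\tilde\bfu_N^{n+1}\|_{H^1_0(\Omega)^d}^2$. For the discrete time derivative I would use the polarization identity $(\bfw-\bfz,\bfw)=\tfrac12\big(\|\bfw\|_{L^2}^2-\|\bfz\|_{L^2}^2+\|\bfw-\bfz\|_{L^2}^2\big)$ with $\bfw=\tilde\bfu_N^{n+1}$, $\bfz=\bfu_N^{n}$ (writing $\|\cdot\|_{L^2}$ for the $L^2(\Omega)^d$ norm), so that \eqref{eq:finitepre} becomes
\[
\tfrac{1}{2\delta\!t_N}\big(\|\tilde\bfu_N^{n+1}\|_{L^2}^2-\|\bfu_N^{n}\|_{L^2}^2+\|\tilde\bfu_N^{n+1}-\bfu_N^{n}\|_{L^2}^2\big)+\|\tilde\bfu_N^{n+1}\|_{H^1_0(\Omega)^d}^2+(\nabla p_N^{n},\tilde\bfu_N^{n+1})=\langle\bff_N^{n+1},\tilde\bfu_N^{n+1}\rangle.
\]

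The key step is to turn the two remaining terms $\|\tilde\bfu_N^{n+1}\|_{L^2}^2$ and $(\nabla p_N^{n},\tilde\bfu_N^{n+1})$ into the telescoping pressure term of \eqref{eq:local_u_estsemidis} by means of the correction steps. From \eqref{eq:finitecor2} one has $\tilde\bfu_N^{n+1}=\bfu_N^{n+1}+\delta\!t_N\nabla(p_N^{n+1}-p_N^{n})$, and by \eqref{eq:zerodiv} the corrected velocity $\bfu_N^{n+1}\in\bfVN$ is $L^2$-orthogonal to $\nabla q$ for all $q\in M_N$, in particular to $\nabla(p_N^{n+1}-p_N^{n})$ and to $\nabla p_N^{n}$. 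This yields $\|\tilde\bfu_N^{n+1}\|_{L^2}^2=\|\bfu_N^{n+1}\|_{L^2}^2+\delta\!t_N^2\|\nabla(p_N^{n+1}-p_N^{n})\|_{L^2}^2$ (a discrete Pythagoras for the Helmholtz--Leray splitting of Remark \ref{rem:rembothfields}), and $(\nabla p_N^{n},\tilde\bfu_N^{n+1})=\delta\!t_N(\nabla p_N^{n},\nabla(p_N^{n+1}-p_N^{n}))$ (the latter can also be read directly off \eqref{eq:finitecor} with $q=p_N^{n}$). Substituting these and using the elementary identity $\tfrac{\delta\!t_N}{2}\|g-h\|_{L^2}^2+\delta\!t_N(h,g-h)=\tfrac{\delta\!t_N}{2}\big(\|g\|_{L^2}^2-\|h\|_{L^2}^2\big)$ with $g=\nabla p_N^{n+1}$, $h=\nabla p_N^{n}$, all pressure contributions collapse to $\tfrac{\delta\!t_N}{2}\big(\|\nabla p_N^{n+1}\|_{L^2}^2-\|\nabla p_N^{n}\|_{L^2}^2\big)$, which gives \eqref{eq:local_u_estsemidis}.

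For the a priori bounds I would multiply \eqref{eq:local_u_estsemidis} by $\delta\!t_N$ and sum over $n\in\llbracket 0,m-1\rrbracket$ for arbitrary $m\le N$, the $\bfu_N$- and pressure-gradient terms then telescoping. Using $p_N^{0}=0$ and $\|\bfu_N^{0}\|_{L^2}=\|\mathcal P_{\bfVN}\bfu_0\|_{L^2}\le\|\bfu_0\|_{L^2}$, the Young inequality $\delta\!t_N\langle\bff_N^{n+1},\tilde\bfu_N^{n+1}\rangle\le\tfrac{\delta\!t_N}{2}\|\bff_N^{n+1}\|_{H^{-1}(\Omega)^d}^2+\tfrac{\delta\!t_N}{2}\|\tilde\bfu_N^{n+1}\|_{H^1_0(\Omega)^d}^2$ (whose second summand is absorbed into the diffusion sum on the left), and the Jensen estimate $\delta\!t_N\sum_n\|\bff_N^{n+1}\|_{H^{-1}(\Omega)^d}^2\le\|\bff\|_{L^2(0,T;H^{-1}(\Omega)^d)}^2$, one obtains a bound, uniform in $m$ and $N$, by a constant $\cter{cste:est}^2$ depending only on $\|\bfu_0\|_{L^2(\Omega)^d}$, $\|\bff\|_{L^2(0,T;H^{-1}(\Omega)^d)}$ (and, if one wishes, on $|\Omega|$), simultaneously for $\|\bfu_N^{m}\|_{L^2}^2$, for $\delta\!t_N^2\|\nabla p_N^{m}\|_{L^2}^2$, for $\delta\!t_N\sum_{n=0}^{m-1}\|\tilde\bfu_N^{n+1}\|_{H^1_0(\Omega)^d}^2=\|\tilde\bfu_N\|_{L^2(0,T;H^1_0(\Omega)^d)}^2$, and for $\sum_{n=0}^{m-1}\|\tilde\bfu_N^{n+1}-\bfu_N^{n}\|_{L^2}^2$; taking the maximum over $m$ gives the two bounds of \eqref{eq:tildeuu_estfinite}. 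For \eqref{eq:difftildeuu_estfinite}, I would note that $\bfu_N^{n+1}=\mathcal P_{\bfVN}\tilde\bfu_N^{n+1}$, since $\tilde\bfu_N^{n+1}-\bfu_N^{n+1}=\delta\!t_N\nabla(p_N^{n+1}-p_N^{n})$ is $L^2$-orthogonal to $\bfVN$ by \eqref{eq:defVN}; as $\bfu_N^{n}\in\bfVN$ this gives $\|\tilde\bfu_N^{n+1}-\bfu_N^{n+1}\|_{L^2}\le\|\tilde\bfu_N^{n+1}-\bfu_N^{n}\|_{L^2}$, whence $\|\bfu_N-\tilde\bfu_N\|_{L^2(0,T;L^2(\Omega)^d)}^2=\delta\!t_N\sum_{n}\|\tilde\bfu_N^{n+1}-\bfu_N^{n+1}\|_{L^2}^2\le\cter{cste:est}^2\,\delta\!t_N$. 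Finally, discarding the nonnegative terms $\tfrac{1}{2\delta\!t_N}\|\bfu_N^{n+1}\|_{L^2}^2$, $\tfrac{\delta\!t_N}{2}\|\nabla p_N^{n+1}\|_{L^2}^2$ and half the diffusion term in \eqref{eq:local_u_estsemidis} after a Young splitting of the source term gives the pointwise estimate $\|\tilde\bfu_N^{n+1}-\bfu_N^{n}\|_{L^2}^2\le\|\bfu_N^{n}\|_{L^2}^2+\delta\!t_N^2\|\nabla p_N^{n}\|_{L^2}^2+\delta\!t_N\|\bff_N^{n+1}\|_{H^{-1}(\Omega)^d}^2$; each term on the right is uniformly bounded by what precedes (with $\delta\!t_N\|\bff_N^{n+1}\|_{H^{-1}(\Omega)^d}^2\le\|\bff\|_{L^2(0,T;H^{-1}(\Omega)^d)}^2$), so $\|\tilde\bfu_N^{n+1}\|_{L^2}\le\|\tilde\bfu_N^{n+1}-\bfu_N^{n}\|_{L^2}+\|\bfu_N^{n}\|_{L^2}\le\cter{cste:est}$ up to adjusting the constant, which is the remaining bound of \eqref{eq:difftildeuu_estfinite}.

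The step that is not purely routine is the pressure manipulation of the second paragraph — choosing the right substitutions from the correction steps so that the discrete pressure energy telescopes \emph{exactly}, without any residual term — together with, for the uniform $L^\infty(0,T;L^2)$ control of $\tilde\bfu_N$, the observation that $\bfu_N^{n+1}$ is the $\bfVN$-orthogonal projection of $\tilde\bfu_N^{n+1}$, which is what makes the difference $\tilde\bfu_N^{n+1}-\bfu_N^{n+1}$, not itself telescoping, controllable by the quantity $\tilde\bfu_N^{n+1}-\bfu_N^{n}$ that the energy identity estimates. Beyond this, the proof is bookkeeping around a textbook energy estimate.
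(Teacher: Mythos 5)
Your proposal is correct and follows essentially the paper's route: testing \eqref{eq:finitepre} with $\tilde\bfu_N^{n+1}$ and then using the discrete Helmholtz orthogonality to collapse the pressure terms is exactly the paper's step of squaring $\bfu_N^{n+1}+\delta\!t_N\nabla p_N^{n+1}=\tilde\bfu_N^{n+1}+\delta\!t_N\nabla p_N^{n}$ and adding, your Pythagoras-plus-quadratic-identity bookkeeping being the same computation written differently, and the summation/Young/Jensen part matches what the paper's one-line conclusion intends. One small remark: by \eqref{eqdef:fullfunctions-finite} the corrected velocity satisfies $\bfu_N(t)=\bfu_N^{n}$ on $(t_N^n,t_N^{n+1}]$, so $\Vert\bfu_N-\tilde\bfu_N\Vert_{L^2(0,T;L^2(\Omega)^d)}^2=\delta\!t_N\sum_n\Vert\tilde\bfu_N^{n+1}-\bfu_N^{n}\Vert_{L^2(\Omega)^d}^2$ and the $\sqrt{\delta\!t_N}$ bound follows directly from the energy identity; your detour through $\bfu_N^{n+1}=\mathcal{P}_{\bfVN}\tilde\bfu_N^{n+1}$ is correct but not needed with the paper's indexing convention.
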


\begin{proof}
We take  $\tilde \bfu_N^{n+1} $  in \eqref{eq:finitepre}  and we obtain  for $ n \in \llbracket 0,N-1 \rrbracket$
\begin{multline}
 \label{eq:estim_semid_pred}
 \frac 1 {2\deltat_{\! N}} \Vert \tilde \bfu_\nstep^{\nalgo+1}\Vert_{L^2(\Omega)^d}^2 -\frac 1 {2\deltat_{\! N}}  \Vert \bfu_\nstep^\nalgo\Vert_{L^2(\Omega)^d}^2  +  \frac 1 {2\deltat_{\! N}} \Vert \tilde \bfu_\nstep^{\nalgo+1}- \bfu_\nstep^\nalgo\Vert_{L^2(\Omega)^d}^2   \\ + ( \nabla p_\nstep^\nalgo, \tilde \bfu_\nstep^{\nalgo+1} ) + \Vert   \tilde \bfu_\nstep^{\nalgo+1}\Vert_{H^1_0(\Omega)^d}^2  = \langle\bff_{\! N}^{n+1} , \tilde \bfu_\nstep^{\nalgo+1}\rangle. 
\end{multline}
Squaring the relation $ \bfu_N^{n+1} + \delta\!t_N \nabla p_N^{n+1} = \tilde \bfu_N^{n+1} + \delta\!t_N \nabla p_N^{n}$, integrating over $\Omega$, 
and owing to 
$\bfu_N^{n+1} \in \bfVN   $, we get that for $ n \in \llbracket 0,N-1 \rrbracket$,
\begin{multline*}
  \frac 1 {2 \deltat_{\! N}}
  \Vert \bfu_\nstep^{\nalgo+1}\Vert_{L^2(\Omega)^d}^2 + \frac{\deltat_{\! N}}{2} \| \nabla p_\nstep^{n+1}\|_{L^2(\Omega)^d}^2  = \frac 1 {2 \deltat_{\! N}}   \|\tilde\bfu_\nstep^{\nalgo+1}\|_{L^2(\Omega)^d}^2 \\ + \frac{\deltat_{\! N}}{2}  \| \nabla p_\nstep^{n}\|_{L^2(\Omega)^d}^2 + ( \tilde\bfu_\nstep^{\nalgo+1}, \nabla p_\nstep^\nalgo ).
\end{multline*}
Summing the latter relation with \eqref{eq:estim_semid_pred} yields \eqref{eq:local_u_estsemidis} for $n \in \llbracket 0,N-1 \rrbracket$.
We then get Relations \eqref{eq:tildeuu_estfinite} by summing over the time steps,  using the Cauchy-Schwarz and Poincar\'e inequalities.
\end{proof}

\begin{lemma}\label{lem:weakcv}
Under Assumptions \eqref{hyp:T-Omega}, \eqref{hyp:f-u0}, let $(\delta\!t_N,\bfX_{\!N},M_N)_{N\ge 1}$ be a sequence such that \eqref{hyp:timediscfinite}, \eqref{hyp:X}, \eqref{hyp:M} and \eqref{hyp:Mlim} hold. Then
 there exists $\bfu\in L^\infty(0,T;L^2(\Omega)^d) \cap L^2(0,T;H_0^1(\Omega)^d\cap\bfV(\Omega))$ and a subsequence again denoted by $(\delta\!t_N,\bfX_{\!N},M_N)_{N\ge 1}$ such that, letting $(\tilde \bfu_N,\bfu_N)_{N\ge 1}$ be defined by Lemma \ref{lem:exist-finite}, then $(\tilde \bfu_\nstep)_{\nstep \ge 1}$ weakly converges to $\bfu$ in $L^2(0,T;H^1_0(\Omega)^d)$ and  $( \bfu_\nstep)_{\nstep \ge 1}$ weakly-$\star$ converges to $\bfu$ in $L^\infty(0,T;L^2(\Omega)^d)$.
\end{lemma}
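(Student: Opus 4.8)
The plan is to extract weakly convergent subsequences from the a priori estimates of Lemma~\ref{lem:estfinite}, identify the two limits, and check that the common limit has the announced regularity. By \eqref{eq:tildeuu_estfinite} and \eqref{eq:difftildeuu_estfinite}, $(\tilde\bfu_N)_{N\ge1}$ is bounded in the reflexive space $L^2(0,T;H^1_0(\Omega)^d)$, while $(\bfu_N)_{N\ge1}$ is bounded in $L^\infty(0,T;L^2(\Omega)^d)$, the topological dual of the separable space $L^1(0,T;L^2(\Omega)^d)$. Hence, up to a subsequence (not relabelled), $\tilde\bfu_N$ converges weakly in $L^2(0,T;H^1_0(\Omega)^d)$ to some $\bfu$ in that space, and $\bfu_N$ converges weakly-$\star$ in $L^\infty(0,T;L^2(\Omega)^d)$ to some $\bfw$. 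Since $(0,T)$ has finite measure and $H^1_0(\Omega)^d$ embeds continuously in $L^2(\Omega)^d$, both $\tilde\bfu_N$ and $\bfu_N$ also converge weakly in $L^2(0,T;L^2(\Omega)^d)$, to $\bfu$ and $\bfw$ respectively; as the second bound in \eqref{eq:difftildeuu_estfinite} forces $\bfu_N-\tilde\bfu_N\to0$ strongly in $L^2(0,T;L^2(\Omega)^d)$, uniqueness of weak limits yields $\bfw=\bfu$. Finally $\bfu\in L^\infty(0,T;L^2(\Omega)^d)$, with the corresponding norm bound, follows from the weak-$\star$ lower semicontinuity of the norm.

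It remains to prove $\bfu(t)\in\bfV(\Omega)$ for a.e.\ $t\in(0,T)$. Fix $\xi\in H^1(\Omega)$; replacing $\xi$ by $\xi-\tfrac{1}{|\Omega|}\int_\Omega\xi\dx$ we may assume $\xi\in H^1(\Omega)\cap L^2_0(\Omega)$ without changing $\nabla\xi$. By \eqref{hyp:Mlim}, $q_N:=\mathcal{P}_{M_N}(\xi)\to\xi$ in $H^1(\Omega)$, so $\nabla q_N\to\nabla\xi$ in $L^2(\Omega)^d$. Fix $\theta\in C^\infty_c(0,T)$: then $(t,\bfx)\mapsto\theta(t)\,\nabla q_N(\bfx)$ converges strongly to $\theta\,\nabla\xi$ in $L^2(0,T;L^2(\Omega)^d)$, while $\bfu_N$ converges weakly to $\bfu$ in that space, so their $L^2(0,T;L^2(\Omega)^d)$ pairing converges. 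Since $q_N\in M_N$, \eqref{eq:zerodiv} gives $(\bfu_N(t),\nabla q_N)=0$ for all $t\in(0,T)$, hence
\[
0=\int_0^T\theta(t)\,(\bfu_N(t),\nabla q_N)\dt\ \longrightarrow\ \int_0^T\theta(t)\,(\bfu(t),\nabla\xi)\dt.
\]
As $\theta$ is arbitrary, $(\bfu(t),\nabla\xi)=0$ for a.e.\ $t$; applying this to a countable dense subset of $H^1(\Omega)$ produces a single null set outside of which $(\bfu(t),\nabla\xi)=0$ for every $\xi\in H^1(\Omega)$, i.e.\ $\bfu(t)\in\bfV(\Omega)$ by \eqref{def:V}. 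Together with $\bfu\in L^2(0,T;H^1_0(\Omega)^d)$, this gives $\bfu\in L^2(0,T;H^1_0(\Omega)^d\cap\bfV(\Omega))$.

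The only delicate point is the passage to the limit in the discrete incompressibility constraint \eqref{eq:zerodiv}: the test space $M_N$ itself varies with $N$, so one cannot pair it directly against a fixed $\nabla\xi$. Assumption \eqref{hyp:Mlim} is exactly what allows one to replace $\nabla\xi$ by the strongly convergent sequence $\nabla\mathcal{P}_{M_N}(\xi)$ and then apply the standard weak--strong convergence principle together with the weak $L^2(0,T;L^2(\Omega)^d)$ convergence of $(\bfu_N)_{N\ge1}$.
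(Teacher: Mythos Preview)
Your proof is correct and follows essentially the same route as the paper: extract weak (respectively weak-$\star$) limits from the a priori bounds \eqref{eq:tildeuu_estfinite}, identify them via \eqref{eq:difftildeuu_estfinite}, and pass to the limit in \eqref{eq:zerodiv} by replacing $\nabla\xi$ with the strongly convergent $\nabla\mathcal{P}_{M_N}\xi$ using \eqref{hyp:Mlim}. Your argument is in fact slightly more detailed than the paper's (the reduction to $\xi\in L^2_0(\Omega)$ and the countable-dense-set step to produce a single null set), but the ideas are identical.
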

\begin{proof}
Owing to \eqref{eq:tildeuu_estfinite}, we get the existence of $\tilde\bfu\in L^2(0,T;H^1_0(\Omega)^d)$ such that, up to the extraction of a subsequence,  the sequence $(\tilde \bfu_\nstep)_{\nstep \ge 1}$  weakly converges to $\tilde\bfu$ in $L^2(0,T;H^1_0(\Omega)^d)$, and of $\bfu\in L^\infty(0,T;L^2(\Omega)^d)$ such that  $(\bfu_\nstep)_{\nstep \ge 1}$ weakly converges to $\bfu$ in $L^\infty(0,T;L^2(\Omega)^d)$ for the weak star topology.
Let $\xi \in H^1(\Omega) \cap L_0^2(\Omega) $ and $\varphi\in C^\infty_c(]0,T[)$ be given.

We notice that \eqref{eq:zerodiv} yields
\[
\int_0^T(\bfu_\nstep, \nabla \mathcal{P}_{M_N } \xi ) \varphi(t)\dt= 0,~\text{for any}~N \ge 1.
\]
Using the convergence of  the sequence $(\mathcal{P}_{M_N } \xi)_{N \ge 1}$ in $H^1(\Omega)$, the weak convergence of the sequence $( \bfu_N)_{N \ge 1}$ in $L^2((0,T) \times \Omega)^d$ and
passing to the limit in the previous identity gives
$$
\int_0^T( \bfu , \nabla\xi )\varphi(t)\dt=0,~\text{for any}~\xi \in H^1(\Omega).
$$
We then obtain that $\bfu \in L^2(0,T;H_0^1(\Omega)^d\cap\bfV(\Omega))$. Using \eqref{eq:difftildeuu_estfinite}, we get that $\tilde\bfu = \bfu$, which concludes the proof. 
\end{proof}

\subsection{Time estimates}\label{sec:timetrans}
The weak convergence property given by Lemma \ref{lem:weakcv} is not sufficient for  passing to the limit in the scheme, owing to the presence of the nonlinear convection term.
  Hence we need some stronger compactness property on one of the subsequences $(\bfu_\nstep)_{\nstep \ge 1}$ or $(\tilde \bfu_\nstep)_{\nstep \ge 1}$,
obtained through the application of Theorem \ref{thm:compactness1} below. We therefore need to prove an estimate on the time translates which fulfills  \eqref{eq:hyptimetranslate}. This is done by first proving  an estimate for the time translates with respect to a semi-norm which plays the role of a discrete $H^{-1}$ seminorm. This seminorm, denoted by $| \cdot |_{\ast,N}$, makes the pressure gradient in \eqref{eq:finitecor2} vanish by considering test functions $\bfv \in \bfE_N\subset \bfVN$.
It is defined  for any $\bfw\in L^2(\Omega)^d$ by
\begin{equation}
 \label{etoile-unN}
  |\bfw|_{\ast,N} = \sup\{ ( \bfw , \bfv),~ \bfv \in  \bfE_N,~ \Vert \bfv \Vert_{\bfE} \le 1\}.
\end{equation}
Recall that $\bfE_N$ and $\Vert \bfv \Vert_{\bfE}$ are defined by \eqref{eq:defbfE}-\eqref{eq:defnormeE} and that the above definition remains meaningful even if $\bfE_N = \{0\}$.
\begin{lemma}[A first estimate on the time translates]\label{lem:transsemidis}
Under Assumptions \eqref{hyp:T-Omega}, \eqref{hyp:f-u0}, \eqref{hyp:timediscfinite}, \eqref{hyp:X}, \eqref{hyp:M}, 
 there exists $\ctel{cste:esttrans1}$ only depending on $|\Omega|$, $\Vert\bfu_0\Vert_{L^2(\Omega)^d}$ and $\Vert\bff\Vert_{L^2(0,T;H^{-1}(\Omega)^d)}$ such that for any $N \ge 1$ and for any $\tau \in (0,T)$, 
\begin{equation*}
\int_0^{T-\tau} |\tilde  \bfu_\nstep(t+\tau) -  \tilde \bfu_\nstep(t) |_{\ast,N}^2 \dt \le \cter{cste:esttrans1} \tau (\tau +\deltat_{\! N}).
\end{equation*}
\end{lemma}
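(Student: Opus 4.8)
The strategy is to sum the scheme equation \eqref{eq:finitepre} over the time steps spanned by an interval $(t,t+\tau)$, test against an arbitrary $\bfv\in\bfE_N$ with $\Vert\bfv\Vert_\bfE\le 1$, and then estimate each resulting term. The crucial point is that since $\bfv\in\bfE_N\subset\bfVN$, testing the correction relation \eqref{eq:finitecor2} against $\bfv$ gives $(\bfu_N^{n+1},\bfv) = (\tilde\bfu_N^{n+1},\bfv)$ (the pressure gradient drops out), so that $(\tilde\bfu_N^{n+1}-\tilde\bfu_N^n,\bfv)$ can be converted, via a telescoping argument using the correction step, into $(\bfu_N^{n+1}-\bfu_N^n,\bfv)$ plus error terms controlled by $\Vert\bfu_N^n-\tilde\bfu_N^n\Vert_{L^2}$, which by \eqref{eq:difftildeuu_estfinite} is $O(\sqrt{\deltat_N})$. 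More precisely, for a.e.\ $t$, with $n_0,n_1$ the indices such that $t\in(t^{n_0},t^{n_0+1}]$ and $t+\tau\in(t^{n_1},t^{n_1+1}]$, we have $\tilde\bfu_N(t+\tau)-\tilde\bfu_N(t) = \tilde\bfu_N^{n_1+1}-\tilde\bfu_N^{n_0+1}$, and I would write $(\tilde\bfu_N^{n_1+1}-\tilde\bfu_N^{n_0+1},\bfv)$ as a sum over $n=n_0+1,\dots,n_1$ of $(\tilde\bfu_N^{n+1}-\tilde\bfu_N^n,\bfv)$.

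\textbf{Key steps.} First, from \eqref{eq:finitepre} written at levels $n$ and rearranged, $\frac{1}{\deltat_N}(\tilde\bfu_N^{n+1}-\bfu_N^n,\bfv) = -b(\tilde\bfu_N^n,\tilde\bfu_N^{n+1},\bfv) - a(\tilde\bfu_N^{n+1},\bfv) - (\nabla p_N^n,\bfv) + \langle\bff_N^{n+1},\bfv\rangle$; since $\bfv\in\bfVN$ and $p_N^n\in M_N$, the term $(\nabla p_N^n,\bfv)=0$. Using $(\bfu_N^n,\bfv) = (\tilde\bfu_N^n,\bfv)$ is not available directly (that identity holds for $\bfu_N^n$ vs.\ its own predicted field $\tilde\bfu_N^n$ only through \eqref{eq:finitecor2} at level $n$, which indeed gives $(\bfu_N^n,\bfv)=(\tilde\bfu_N^n,\bfv)$ for $\bfv\in\bfVN$); so in fact $(\tilde\bfu_N^{n+1}-\tilde\bfu_N^n,\bfv) = (\tilde\bfu_N^{n+1}-\bfu_N^n,\bfv)$ exactly. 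Hence summing, $(\tilde\bfu_N(t+\tau)-\tilde\bfu_N(t),\bfv) = \sum_{n=n_0+1}^{n_1}\deltat_N\big(-b(\tilde\bfu_N^n,\tilde\bfu_N^{n+1},\bfv) - a(\tilde\bfu_N^{n+1},\bfv) + \langle\bff_N^{n+1},\bfv\rangle\big)$. Next, bound each term using $\Vert\bfv\Vert_\bfE\le 1$: for $a$, $|a(\tilde\bfu_N^{n+1},\bfv)|\le \Vert\tilde\bfu_N^{n+1}\Vert_{H^1_0}\Vert\bfv\Vert_{H^1_0}\le \Vert\tilde\bfu_N^{n+1}\Vert_{H^1_0}$; for $\bff_N^{n+1}$, $|\langle\bff_N^{n+1},\bfv\rangle|\le\Vert\bff_N^{n+1}\Vert_{H^{-1}}$; for $b$, use the identity \eqref{eq:identityconv} and bound $|b(\tilde\bfu_N^n,\tilde\bfu_N^{n+1},\bfv)|\le C\Vert\tilde\bfu_N^n\Vert_{H^1_0}\Vert\tilde\bfu_N^{n+1}\Vert_{H^1_0}\Vert\bfv\Vert_{L^\infty}\le C\Vert\tilde\bfu_N^n\Vert_{H^1_0}\Vert\tilde\bfu_N^{n+1}\Vert_{H^1_0}$, invoking $H^1_0\hookrightarrow L^4$ and $\Vert\bfv\Vert_{L^\infty}\le 1$ to control the product of the two $L^4$ factors with the $L^\infty$ factor — this is where the $L^\infty$ component of the $\bfE$-norm is essential. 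Taking the supremum over admissible $\bfv$ gives a bound on $|\tilde\bfu_N(t+\tau)-\tilde\bfu_N(t)|_{\ast,N}$; since there are at most $\lceil\tau/\deltat_N\rceil+1 \le \tau/\deltat_N + 1$ indices in the sum and each contributes $\deltat_N$, the prefactor is $O(\tau+\deltat_N)$.

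\textbf{Squaring and integrating.} Square the pointwise bound and integrate in $t$ over $(0,T-\tau)$. Using Cauchy--Schwarz on the sum (with $\lesssim \tau/\deltat_N+1$ terms), $|\tilde\bfu_N(t+\tau)-\tilde\bfu_N(t)|_{\ast,N}^2 \lesssim (\tau+\deltat_N)\,\deltat_N\sum_{n=n_0+1}^{n_1}\big(\Vert\tilde\bfu_N^n\Vert_{H^1_0}^2\Vert\tilde\bfu_N^{n+1}\Vert_{H^1_0}^2 + \Vert\tilde\bfu_N^{n+1}\Vert_{H^1_0}^2 + \Vert\bff_N^{n+1}\Vert_{H^{-1}}^2\big)$; then integrating in $t$ turns $\sum_{n=n_0+1}^{n_1}\deltat_N(\cdots)$ into a factor $O(\tau)$ times the full sum $\sum_{n}\deltat_N(\cdots)$, which is $\int_0^T$ of the corresponding quantities, finite by \eqref{eq:tildeuu_estfinite}, \eqref{eq:difftildeuu_estfinite} and \eqref{hyp:f-u0} (for the quartic term, use $\Vert\tilde\bfu_N\Vert_{L^\infty(0,T;L^2)}\le\cter{cste:est}$ is not enough — rather bound $\Vert\tilde\bfu_N^n\Vert_{H^1_0}^2\Vert\tilde\bfu_N^{n+1}\Vert_{H^1_0}^2\le\frac12(\Vert\tilde\bfu_N^n\Vert_{H^1_0}^4+\Vert\tilde\bfu_N^{n+1}\Vert_{H^1_0}^4)$ and then... actually the cleanest route is to keep one $H^1_0$ factor and bound the other $\tilde\bfu_N^{n+1}$ in $L^2$ by $\cter{cste:est}$ after an inverse-type estimate — but no inverse estimate is available, so instead factor $\Vert\tilde\bfu_N^n\Vert_{H^1_0}\Vert\tilde\bfu_N^{n+1}\Vert_{H^1_0}$ and use Cauchy--Schwarz in $n$ so that $\sum_n\deltat_N\Vert\tilde\bfu_N^n\Vert_{H^1_0}^2\Vert\tilde\bfu_N^{n+1}\Vert_{H^1_0}^2 \le (\sum_n\deltat_N\Vert\tilde\bfu_N^n\Vert_{H^1_0}^2)^{1/2}\cdot\sup_n\Vert\tilde\bfu_N^{n+1}\Vert_{H^1_0}^2\cdot(\ldots)$, which still needs an $L^\infty(0,T;H^1_0)$ bound we do not have). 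The honest resolution, which I expect is what the authors do: treat the convection term's contribution to the time translate only on one side, i.e.\ bound $|b(\tilde\bfu_N^n,\tilde\bfu_N^{n+1},\bfv)|\le C\Vert\tilde\bfu_N^n\Vert_{L^4}\Vert\gradi\tilde\bfu_N^{n+1}\Vert_{L^2}\Vert\bfv\Vert_{L^4} + \ldots$ and use $\Vert\bfv\Vert_{L^4}\le C\Vert\bfv\Vert_{L^\infty}^{1/2}\Vert\bfv\Vert_{L^2}^{1/2}\le C$, then the sum $\sum_n\deltat_N\Vert\tilde\bfu_N^n\Vert_{L^4}^2\Vert\gradi\tilde\bfu_N^{n+1}\Vert_{L^2}^2$ — and here use $\Vert\tilde\bfu_N^n\Vert_{L^4}^2 \le C\Vert\tilde\bfu_N^n\Vert_{L^2}^{1/2}\Vert\tilde\bfu_N^n\Vert_{H^1_0}^{3/2}$ (Ladyzhenskaya, $d\le 3$), bound the $L^2$ factor by $\cter{cste:est}$, and apply Hölder in $n$ with exponents making $\sum\deltat_N\Vert\tilde\bfu_N^n\Vert_{H^1_0}^{3/2}\Vert\gradi\tilde\bfu_N^{n+1}\Vert_{L^2}^2$ finite via $\sum\deltat_N\Vert\tilde\bfu_N^{\cdot}\Vert_{H^1_0}^2\le\cter{cste:est}^2$. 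The main obstacle is precisely this bookkeeping for the nonlinear term: arranging the Hölder/interpolation so that the quartic convective contribution is controlled by the available $L^2(0,T;H^1_0)\cap L^\infty(0,T;L^2)$ bound together with $\Vert\bfv\Vert_\bfE\le 1$. Everything else — the telescoping via the correction step, the vanishing of the pressure term against $\bfv\in\bfVN$, and the counting argument producing the factor $\tau(\tau+\deltat_N)$ — is routine.
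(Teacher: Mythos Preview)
Your overall architecture is right and matches the paper: telescoping via the scheme, vanishing of the pressure term against $\bfv\in\bfE_N\subset\bfVN$, and the counting $\sum_n\chi_{N,\tau}^n\deltat_N\le\tau+\deltat_N$ together with $\int_0^{T-\tau}\chi_{N,\tau}^n\dt\le\tau$ to produce the factor $\tau(\tau+\deltat_N)$. The identity $(\bfu_N^n,\bfv)=(\tilde\bfu_N^n,\bfv)$ for $\bfv\in\bfVN$ is also used exactly as you say.

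The gap is your treatment of the convective term. You bound $|b(\tilde\bfu_N^n,\tilde\bfu_N^{n+1},\bfv)|$ by a product of two $H^1_0$ norms and then struggle with the resulting quartic sum, trying Ladyzhenskaya, H\"older in $n$, and even an (unavailable) inverse estimate --- none of which closes. The fix is much simpler and is what the paper does: use the $L^\infty(0,T;L^2)$ bound on $\tilde\bfu_N$ from \eqref{eq:difftildeuu_estfinite}, namely $\Vert\tilde\bfu_N^m\Vert_{L^2(\Omega)^d}\le\cter{cste:est}$ uniformly in $m$ and $N$. Writing $b$ via \eqref{eq:identityconv},
\[
|b(\tilde\bfu_N^n,\tilde\bfu_N^{n+1},\bfv)| \le \Vert\tilde\bfu_N^n\Vert_{L^2}\Vert\gradi\tilde\bfu_N^{n+1}\Vert_{L^2}\Vert\bfv\Vert_{L^\infty} + \tfrac12\Vert\dive\tilde\bfu_N^n\Vert_{L^2}\Vert\tilde\bfu_N^{n+1}\Vert_{L^2}\Vert\bfv\Vert_{L^\infty},
\]
and bounding the two $L^2$ velocity factors by $\cter{cste:est}$ gives
\[
|b(\tilde\bfu_N^n,\tilde\bfu_N^{n+1},\bfv)| \le \cter{cste:est}\big(\Vert\tilde\bfu_N^{n+1}\Vert_{H^1_0}+\Vert\tilde\bfu_N^{n}\Vert_{H^1_0}\big)\Vert\bfv\Vert_{\bfE}.
\]
This is \emph{linear} in the $H^1_0$ norms, so after squaring and summing you only need $\sum_n\deltat_N\Vert\tilde\bfu_N^{n}\Vert_{H^1_0}^2\le\cter{cste:est}^2$, which you already have from \eqref{eq:tildeuu_estfinite}. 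No quartic bookkeeping, no interpolation, no inverse estimate.
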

\begin{remark}\label{rem:ttonesthon}
 The proof of Lemma \ref{lem:transsemidis} relies on the $L^2(0,T;H^1_0(\Omega)^d)$ estimates \eqref{eq:tildeuu_estfinite}-\eqref{eq:difftildeuu_estfinite} proved in Lemma \ref{lem:estfinite}.
\end{remark}

\begin{proof}
Let $ N \ge 2$ and $\tau \in (0,T)$ (for $N=1$ the quantity we have to estimate is zero).  
Let $(\chi^n_{N,\tau})_{n \in \llbracket 1,N-1 \rrbracket}$  be the family of measurable functions defined for $n \in \llbracket 1,N-1 \rrbracket$ and $t \in \xR$ by $ \chi_{N,\tau}^n(t) =  \mathds 1_{ (t_N^n-\tau,t^n_N]}(t) $, then 
\begin{equation}
 \label{eqdef:chi-N-taufinite}
 \tilde \bfu_N(t+\tau)  -\tilde \bfu_N(t)  = \sum_{n=1}^{N-1} \chi_{N,\tau}^n(t) (\tilde \bfu_N^{n+1} - \tilde \bfu_N^n), ~\text{for any}~ t \in (0,T-\tau).
\end{equation}
Hence, owing to \eqref{eq:finitepre},  we have for any $\bfv \in \bfX_{\!N}  $ and for any $t \in (0,T-\tau)$ the following identity
\begin{multline*}
(\tilde \bfu_N(t+\tau) - \tilde \bfu_N(t),\bfv) = - \deltat_{\! N} \!\! \sum_{n=1}^{N-1} \!\!   \chi_{N,\tau}^n(t)a( \tilde \bfu_N^{n+1},\bfv)  \\  - \deltat_{\! N} \sum_{n=1}^{N-1} \!\!   \chi_{N,\tau}^n(t) b(\tilde \bfu_N^n,\tilde \bfu_N^{n+1},\bfv) 
  - \deltat_{\! N} \sum_{n=1}^{N-1} \!\!   \chi_{N,\tau}^n(t) (\nabla (2p_N^n - p_N^{n-1}),\bfv) \\ + \deltat_{\! N} \sum_{n=1}^{N-1} \!\!   \chi_{N,\tau}^n(t) \langle \bff_{\! N}^{n+1},\bfv\rangle.
\end{multline*}
Let $\bfv  \in\bfE_N$, and define $A(t)= \displaystyle\bigl(  \tilde\bfu_N(t+\tau) - \tilde \bfu_N(t) , \bfv\bigr)$. 
Using the previous identity we obtain
\[
A(t)= A_{d}(t) +  A_{c}(t) + A_{p}(t) + A_{\bff}(t),
\]
with
\[
A_{d}(t) = -\sum_{n=1}^{N-1}\chi_{N,\tau}^n(t)\deltat_{\! N} \int_\Omega \gradi \tilde{\bfu}_N^{n+1} : \gradi \bfv \dx,
\]\[
A_{c}(t) =- \sum_{n=1}^{N-1}\chi_{N,\tau}^n(t)\deltat_{\! N} b(\tilde \bfu_N^n,\tilde \bfu_N^{n+1},\bfv),
\]\[
A_{p}(t) = \sum_{n=1}^{N-1}    \chi_{N,\tau}^n(t) \deltat_{\! N} \int_\Omega  (2 p_N^{n} - p_N^{n-1}) \dive \bfv \dx,
\]\[
A_{\bff}(t) =  \sum_{n=1}^{N-1}    \chi_{N,\tau}^n(t)\deltat_{\! N} \langle \bff_{\! N}^{n+1}  ,\bfv\rangle.
\]
Using \eqref{eq:defnormeE} we have
\begin{equation}\label{eq:est_Adfinite}
A_{d}(t) \le 
 \| \bfv \|_{\bfE} \sum_{n=1}^{N-1}   \chi_{N,\tau}^n(t) \deltat_{\! N} \| \tilde{\bfu}_N^{n+1}  \|_{H_0^1(\Omega)^d}.
\end{equation}
Using the identity \eqref{eq:identityconv} and  the estimates \eqref{eq:tildeuu_estfinite}-\eqref{eq:difftildeuu_estfinite}
we have
\begin{multline}\label{eq:est_Acfinite}
A_{c}(t)  
= - \sum_{n=1}^{N-1}\chi_{N,\tau}^n(t)\deltat_{\! N} (( \tilde \bfu_N^n \cdot \gradi)  \tilde \bfu_N^{n+1} , \bfv ) \\
 - \frac{1}{2} \sum_{n=1}^{N-1}\chi_{N,\tau}^n(t)\deltat_{\! N} ( \dive \tilde \bfu_N^n \tilde \bfu_N^{n+1} , \bfv ) 
\\
\le  \sum_{n=1}^{N-1}\chi_{N,\tau}^n(t)\deltat_{\! N} \| \tilde \bfu_N^n \|_{L^2(\Omega)^d}  \| \tilde \bfu_N^{n+1} \|_{H_0^1(\Omega)^d} \| \bfv \|_{L^\infty(\Omega)^d} \\
+ \frac{1}{2}  \sum_{n=1}^{N-1}\chi_{N,\tau}^n(t)\deltat_{\! N} \| \dive \tilde \bfu_N^n \|_{L^2(\Omega)^d}  \| \tilde \bfu_N^{n+1} \|_{L^2(\Omega)^d} \| \bfv \|_{L^\infty(\Omega)^d} \\
\le \cter{cste:est}  \sum_{n=1}^{N-1}\chi_{N,\tau}^n(t)\deltat_{\! N}   \| \tilde \bfu_N^{n+1} \|_{H_0^1(\Omega)^d} \| \bfv \|_{L^\infty(\Omega)^d} \\
+ \frac{1}{2}\cter{cste:est}   \sum_{n=1}^{N-1}\chi_{N,\tau}^n(t)\deltat_{\! N} \|  \tilde \bfu_N^n \|_{H_0^1(\Omega)^d}   \| \bfv \|_{L^\infty(\Omega)^d} \\
\le \cter{cste:est} \sum_{n=1}^{N-1}   \chi_{N,\tau}^n(t)  \deltat_{\! N} (  \|  \tilde{\bfu}_N^{n+1} \|_{H_0^1(\Omega)^d} +  \|  \tilde{\bfu}_N^{n} \|_{H_0^1(\Omega)^d}) \| \bfv \|_{L^\infty(\Omega)^d} \\
\le \cter{cste:est}   \sum_{n=1}^{N-1}   \chi_{N,\tau}^n(t)  \deltat_{\! N}  (  \|  \tilde{\bfu}_N^{n+1} \|_{H_0^1(\Omega)^d} +  \|  \tilde{\bfu}_N^{n} \|_{H_0^1(\Omega)^d}) \| \bfv \|_{\bfE}.
\end{multline}
We get from $\bfE_N\subset\bfVN$ that $A_p(t) =0$. 
Next, we note that
\begin{equation}\label{eq:est_Affinite}
A_{\bff}(t) \le 
C_{\text{sob}} \| \bfv \|_{\bfE} \sum_{n=1}^{N-1}    \chi_{N,\tau}^n(t)\deltat_{\! N} \|\bff_{\! N}^{n+1}\|_{H^{-1}(\Omega)^d}.
\end{equation}
where $C_{\text{sob}}$ is such that $\| \bfv \|_{L^2(\Omega)^d} \le C_{\text{sob}}\| \bfv \|_{H_0^1(\Omega)^d}$ for any $\bfv \in H_0^1(\Omega)^d$.
Summing Equations \eqref{eq:est_Adfinite}, \eqref{eq:est_Acfinite}, \eqref{eq:est_Affinite}, we obtain
$$
A(t) \le C  \| \bfv \|_{\bfE}  \sum_{n=1}^{N-1}   \chi_{N,\tau}^n(t) \deltat_{\! N}(  \| \tilde \bfu_N^{n+1} \|_{H_0^1(\Omega)^d} + \| \tilde \bfu_N^{n} \|_{H_0^1(\Omega)^d}+ \| \bff_{\! N}^{n+1} \|_{H^{-1}(\Omega)^d})
$$
where $ C = \frac 3 2\cter{cste:est} +C_{\text{sob}} +1$.
This implies 
$$
|\tilde  \bfu_N(t+\tau) -  \tilde \bfu_N(t) |_{\ast,N} \le C  \sum_{n=1}^{N-1}  \chi_{N,\tau}^n(t) \deltat_{\! N} ( \| \tilde \bfu_N^{n+1} \|_{H_0^1(\Omega)^d}+\| \tilde \bfu_N^{n} \|_{H_0^1(\Omega)^d}  + \| \bff_{\! N}^{n+1} \|_{H^{-1}(\Omega)^d}).
$$
Since $\sum_{n=1}^{N-1}  \chi_{N,\tau}^n(t) \deltat_{\! N} \le \tau +\deltat_{\! N} $ for any $t \in (0,T-\tau)$ we then obtain
$$
|\tilde  \bfu_N(t+\tau) -  \tilde \bfu_N(t) |_{\ast,N}^2 \le 3 C^2 (\tau +\deltat_{\! N})  \sum_{n=1}^{N-1} \chi_{N,\tau}^n(t)\deltat_{\! N} ( \| \tilde \bfu_N^{n+1} \|_{H_0^1(\Omega)^d}^2+\| \tilde \bfu_N^{n} \|^2_{H_0^1(\Omega)^d}  + \| \bff_{\! N}^{n+1} \|_{H^{-1}(\Omega)^d}^2).
$$
Noting that  $\int_0^{T-\tau} \chi_{N,\tau}^n(t) \dt \le \tau $ for any $n \in \llbracket 1,N-1 \rrbracket$ yields
\begin{multline*}
\int_0^{T-\tau} |\tilde  \bfu_N(t+\tau) -  \tilde \bfu_N(t) |_{\ast,N}^2 \dt  \\  \le 3 C^2 (\tau +\deltat_{\! N}) \sum_{n=1}^{N-1}  \deltat_{\! N} ( \| \tilde \bfu_N^{n+1} \|_{H_0^1(\Omega)^d}^2+ \| \tilde \bfu_N^{n} \|^2_{H_0^1(\Omega)^d} + \| \bff_{\! N}^{n+1} \|_{H^{-1}(\Omega)^d}^2) \int_0^{T-\tau}  \chi_{N,\tau}^n(t) \dt \\
\le   3 C^2 (\tau +\deltat_{\! N}) \tau ( 2 \| \tilde \bfu_N \|_{L^2(0,T:H_0^1(\Omega)^d)}^2 + \| \bff \|_{L^2(0,T;H^{-1}(\Omega)^d)}^2) 
\le \cter{cste:esttrans1} \tau (\tau +\deltat_{\! N})
\end{multline*}
which gives the expected result.

\end{proof}

\begin{remark}
Note that the property   $\| \bfv \|_{L^\infty(\Omega)^d} \le \| \bfv \|_{\bfE}$ for any $\bfv \in \bfE_N$ is only used in \eqref{eq:est_Acfinite} to estimate the discrete convective term. 
\end{remark}

 Note that Lemma \ref{lem:transsemidis} is only the first step for proving an estimate on the time translates of the predicted velocity in the $L^2(L^2)$ norm. The next steps are based on the Lions-like result given below, for which we need the following result.
 
 \begin{lemma}\label{lem:contprojN}
Under Assumptions \eqref{hyp:T-Omega}, \eqref{hyp:X}, \eqref{hyp:M}, \eqref{hyp:PNexist}, \eqref{hyp:PNconv}, \eqref{hyp:Mlim},
let $(\bfv_N)_{N \ge 1}$ be a sequence of functions  of $L^2(\Omega)^d$
such that $(\bfv_N)_{N\ge 1}$ converges to $\bfv $ in $L^2(\Omega)^d$. 
Then the sequence $(\mathcal{P}_{\bfV_{\! N}(\Omega)}\bfv_N)_{N \ge 1}$ converges to $\mathcal{P}_{\bfV(\Omega)} \bfv$ in $L^2(\Omega)^d$.
\end{lemma}
\begin{proof}
Using the fact that $(\bfv_N)_{N\ge 1}$  is bounded in $L^2(\Omega)^d$ we obtain that the sequence $(\mathcal{P}_{\bfV_{\! N}(\Omega)}\bfv_N)_{N \ge 1}$ is bounded in $L^2(\Omega)^d$. 
Hence there exists a subsequence still denoted by $(\mathcal{P}_{\bfV_{\! N}(\Omega)}\bfv_N)_{N \ge 1}$ that  converges to a function $\tilde \bfv$ weakly in $L^2(\Omega)^d$. 
Let $\xi \in H^1(\Omega) \cap L_0^2(\Omega)$.
Using the fact that for any $N \ge 1$ we have $\mathcal{P}_{\bfV_{\! N}(\Omega)}\bfv_N \in \bfVN   $ and $\mathcal{P}_{M_N } \xi \in M_N $ we obtain
\[
(\mathcal{P}_{\bfV_{\! N}(\Omega)}\bfv_N , \nabla  \mathcal{P}_{M_N } \xi ) = 0,~\text{for any}~N \ge 1.
\]
Using the weak convergence of  the sequence $(\mathcal{P}_{\bfV_{\! N}(\Omega)}\bfv_N)_{N \ge 1}$ in $L^2(\Omega)^d$  and the strong convergence of the sequence $(\mathcal{P}_{M_N } \xi)_{N \ge 1}$ in $H^1(\Omega)$ and
passing to the limit in the previous identity gives
$$
(\tilde \bfv , \nabla\xi )=0,~\text{for any}~\xi \in H^1(\Omega).
$$
We then obtain that $\tilde \bfv \in \bfV(\Omega)$. Let $ \bfvarphi \in  \bfW$ be given. Using the fact that $\Pi_N \bfvarphi \in \bfVN   $ for any $ N\ge 1$, we obtain
$$
  (\bfv_N , \Pi_{N} \bfvarphi )  =( \mathcal{P}_{\bfV_{\! N}(\Omega)}\bfv_N , \Pi_{N} \bfvarphi ),~\text{for any}~N \ge 1.
$$
Using the convergence to $\bfvarphi$ of  the sequence $(\Pi_N \bfvarphi)_{N \ge 1}$ in $L^2(\Omega)^d$, the weak convergence of the sequence $(\mathcal{P}_{\bfV_{\! N}(\Omega)}\bfv_N)_{N \ge 1}$ in $L^2(\Omega)^d$,
passing to the limit in the previous identity and using the fact that $( \bfv , \bfvarphi) = (\mathcal{P}_{\bfV}  \bfv , \bfvarphi)$ for any $\bfvarphi \in \bfW$ give
$$
(\mathcal{P}_{\bfV}  \bfv , \bfvarphi) = (\tilde \bfv , \bfvarphi )~\text{for any}~\bfvarphi \in \bfW.
$$
Using the fact that 
$\bfW$  is dense  in  $\bfV(\Omega)$ for the $L^2(\Omega)^d$ norm we obtain that $ \tilde \bfv = \mathcal{P}_{\bfV} \bfv$ and that the sequence $(\mathcal{P}_{\bfV_{\! N}(\Omega)}\bfv_N)_{N \ge 1}$ converges to $\mathcal{P}_{\bfV(\Omega)} \bfv$ weakly in $L^2(\Omega)^d$. We can write
$$
\| \mathcal{P}_{\bfV_{\! N}(\Omega)}\bfv_N \|_{L^2(\Omega)^d}^2
= ( \bfv_N , \mathcal{P}_{\bfV_{\! N}(\Omega)}\bfv_N ),~\text{for any}~N \ge 1.
$$
Using the convergence of the sequence $(\bfv_N)_{N \ge 1}$ to $\bfv$ in $L^2(\Omega)^d$ and the weak convergence of the sequence $(\mathcal{P}_{\bfV_{\! N}(\Omega)}\bfv_N)_{N \ge 1}$ to $\mathcal{P}_{\bfV(\Omega)} \bfv$ in $L^2(\Omega)^d$ we obtain
$$
\lim_{\Nti} \| \mathcal{P}_{\bfV_{\! N}(\Omega)}\bfv_N \|_{L^2(\Omega)^d}^2 = ( \bfv , \mathcal{P}_{\bfV(\Omega)} \bfv ) = \| \mathcal{P}_{\bfV(\Omega)} \bfv \|_{L^2(\Omega)^d}^2.
$$
The weak convergence of the sequence $(\mathcal{P}_{\bfV_{\! N}(\Omega)}\bfv_N)_{N \ge 1}$ to $\mathcal{P}_{\bfV(\Omega)} \bfv$ in $L^2(\Omega)^d$  and the convergence of the sequence $( \| \mathcal{P}_{\bfV_{\! N}(\Omega)}\bfv_N \|_{L^2(\Omega)^d})_{N \ge 1}$ to $ \| \mathcal{P}_{\bfV(\Omega)} \bfv \|_{L^2(\Omega)^d}$ give the expected result.
\end{proof}

\begin{remark}
 Note that, in the preceding proof, we only use the assumptions weaker than  \eqref{hyp:PNconv} and \eqref{hyp:Mlim}  that $\bfW$ is a dense subset of $ \bfV(\Omega)$  for the norm of $L^2(\Omega)^d$ and that, for any $N \ge 1$ there exists a mapping $\Pi_N : \bfW \to  \bfV_N  $ such that for any $\bfvarphi\in  \bfW $ such that the sequence $(\Pi_N\bfvarphi )_{N \ge 1}$ converges to  $\bfvarphi$ in $L^2(\Omega)^d$. 
\end{remark}
 
\begin{lemma}[Lions-like]\label{lem:lionsfinite}
Under Assumptions \eqref{hyp:T-Omega}, \eqref{hyp:X}, \eqref{hyp:M},  \eqref{hyp:PNexist} \eqref{hyp:PNconv}, \eqref{hyp:Mlim}, we have
\begin{multline}\label{eq:f}
 \forall \varepsilon >0,\ \exists C_\varepsilon >0,\ \exists N_\epsilon \ge 1,\ \forall N \ge N_\epsilon,\ \forall \bfw \in \bfX_{\!N}  \\
\| \mathcal{P}_{\bfVN   } \bfw \|_{L^2(\Omega)^d}  \le \varepsilon \Vert \bfw \Vert_{H_0^1(\Omega)^d} + C_\varepsilon |\bfw|_{\ast,N}.    
\end{multline}
\end{lemma}
\begin{proof}
Let us assume that \eqref{eq:f} does not hold. This means that there exists $\varepsilon >0$ such that
\begin{multline}\label{eq:notf}
 \forall  C\ge 1,\ \forall M \ge 1,\ \exists N \ge M,\ \exists \bfw \in \bfX_{\!N}  \\
 \| \mathcal{P}_{\bfVN   } \bfw \|_{L^2(\Omega)^d}  > \varepsilon \Vert \bfw \Vert_{H_0^1(\Omega)^d} + C |\bfw|_{\ast,N}.    
\end{multline}
Let us set $\nu(0) = 0$, and let us build the infinite set $\mathcal{I} = \{\nu(n),n\in\mathbb{N}^\star\}\subset \mathbb{N}^\star$ and the sequence $(\bfw_N)_{N\in \mathcal{I}}$ by induction. For $n\in\mathbb{N}^\star$, we select $C= n$  and $M= \nu(n-1) + 1$ in \eqref{eq:notf}. We get the existence of a given $N\ge M$ and of a given $\bfw \in \bfX_{N}(\Omega)$ such that 
\[
 \| \mathcal{P}_{\bfVN   } \bfw \|_{L^2(\Omega)^d}   > \varepsilon \Vert \bfw \Vert_{H_0^1(\Omega)^d} + n |\bfw|_{\ast,N}. 
\]
We then define $\nu(n) = N$ and $\bfw_N= \bfw/ \| \mathcal{P}_{\bfVN } \bfw \|_{L^2(\Omega)^d}  $. Let us denote $\mathcal{I}= \{ \nu(n)~\text{such that}~n \in \mathbb{N}^\star\} \subset \mathbb{N}^\star.$ We denote by $\nu^{-1}$ the reciprocal function of $\nu:\mathbb{N}^\star\to \mathcal{I}$. 
Since the mapping $\nu$ is strictly increasing, we then indeed get that $\mathcal{I}$ is infinite.
We can then write, for any $N\in \mathcal{I}$,
\begin{equation*}
    1 > \varepsilon \Vert \bfw_N \Vert_{H_0^1(\Omega)^d} + \nu^{-1}(N) |\bfw_N|_{\ast,N},~\text{for any}~N\in \mathcal{I}.
\end{equation*}
It then follows from the latter inequality that the sequence $( \bfw_N)_{N\in \mathcal{I}}$ is bounded in $H^1_0(\Omega)^d$ and that $|\bfw_N|_{\ast,N} \to 0$ as $N\to\infty$ with $N\in \mathcal{I}$. 
Hence there exists an infinite set $\mathcal{J}\subset  \mathcal{I}$ such that $ (\bfw_N)_{N\in \mathcal{J}} $ converges in $L^2(\Omega)^d$ to a function $\bfw \in H_0^1(\Omega)^d$ while $|\bfw_N|_{\ast,N} \to 0$ as $N\to\infty$ with $N\in \mathcal{J}$. We notice that $ \| \mathcal{P}_{\bfVN   }\bfw_N \|_{L^2(\Omega)^d} = 1 $ for all  $N\in \mathcal{J}$.

Using Lemma \ref{lem:contprojN} we have
$ \mathcal{P}_{\bfVN   } \bfw_{N} \to \mathcal{P}_{\bfV(\Omega)}  \bfw$ in  $L^2(\Omega)^d$ as $N\to\infty$ with $N\in \mathcal{J}$, which therefore implies $\|\mathcal{P}_{\bfV}  \bfw \|_{L^2(\Omega)^d}=1$.

Let $ \bfvarphi \in  \bfW$ be given. 
For any $N\in \mathcal{J}$, by definition of $|\bfw_N|_{\ast,N}$, we have 
\[
   (\bfw_N , \Pi_N \bfvarphi) \le |\bfw_N|_{\ast,N} \Vert  \Pi_N\bfvarphi \Vert_{\bfE}.
\]
Using the fact that, for any $N\in \mathcal{J}$, $\Pi_N \bfvarphi \in \bfVN   $, we then obtain
\[
    ( \bfw_N ,\Pi_N \bfvarphi)  = ( \mathcal{P}_{\bfVN   } \bfw_N ,\Pi_N \bfvarphi)  \le |\bfw_N|_{\ast,N} \Vert  \Pi_N\bfvarphi \Vert_{\bfE}.
\]
Since $ \Vert  \Pi_N\bfvarphi \Vert_{\bfE}$ remains bounded by assumption \eqref{hyp:PNconv}, letting $N\to\infty$ with $N\in \mathcal{J}$ in this inequality yields that 
\[
  ( \mathcal{P}_{\bfV(\Omega)}\bfw , \bfvarphi ) = 0,~\text{for any}~ \bfvarphi \in \bfW.
\]
Using the fact that 
$\bfW$  is dense  in  $\bfV(\Omega)$ for the $L^2(\Omega)^d$ norm 
we can therefore let $\bfvarphi \to \mathcal{P}_{\bfV(\Omega)}\bfw$ in $L^2(\Omega)^d$. This yields
$$
\| \mathcal{P}_{\bfV(\Omega)} \bfw \|_{L^2(\Omega)^d}^2 =  0,
$$  
which contradicts $\Vert \mathcal{P}_{\bfV(\Omega)} \bfw \Vert_{L^2(\Omega)^d} = 1$.
\end{proof}
\begin{remark}
Note that, in the preceding proof, we only used the assumptions weaker than   \eqref{hyp:PNconv} and \eqref{hyp:Mlim}  that $\bfW$ is a dense subset of $ \bfV(\Omega)$  for the norm of $L^2(\Omega)^d$ and that,  for any $N \ge 1$ there exists a mapping $\Pi_N : \bfW \to  \bfE_N  $ such that for any $\bfvarphi\in  \bfW $ such that the sequence $(\Pi_N\bfvarphi )_{N \ge 1}$ converges to  $\bfvarphi$ in $L^2(\Omega)^d$ and the sequence $( \|\Pi_N \bfvarphi \|_{\bfE})_{N \ge 1}$ is bounded. 
\end{remark}

Our aim is now to use Lemma \ref{lem:transsemidis} on the time translates of $(\tilde \bfu_N)_{N \ge 1}$ for the $L^2(|\cdot|_{\ast,N})$ semi-norm and \eqref{eq:f} in the above lemma, in order to obtain an estimate on the time translates for the $L^2(0,T;L^2(\Omega)^d)$ norm, as stated by the next lemma.
\begin{lemma}[$L^2$ estimate on the time translates]\label{lem:trans-utildefinite}
Under Assumptions \eqref{hyp:T-Omega}, \eqref{hyp:f-u0}, \eqref{hyp:timediscfinite}, \eqref{hyp:X}, \eqref{hyp:M}, \eqref{hyp:PNexist}, \eqref{hyp:PNconv}, \eqref{hyp:Mlim}, the sequence $(\tilde \bfu_\nstep)_{N \ge 1}$ satisfies
\begin{equation}
\label{eq:trans-utildefinite}
 \int_0^{T-\tau}\!\!\!\!\!\!\!\!\Vert \tilde \bfu_\nstep(t+\tau) - \tilde \bfu_\nstep(t) \Vert_{L^2(\Omega)^d}^2 \dt \to 0 \mbox{ as } \tau \to 0, \mbox{ uniformly with respect to }N, 
\end{equation}
and is therefore relatively compact in $L^2(0,T;L^2(\Omega)^d)$.
\end{lemma}
\begin{proof}
Let us show that $A_\nstep(\tau) \to 0$ as $\tau \to 0$ uniformly with respect to $N$, where we define for any $\tau\in(0,T)$
\[
 A_\nstep(\tau) := \int_0^{T-\tau} \Vert \tilde \bfu_\nstep(t+\tau) - \tilde \bfu_\nstep(t) \Vert_2^2 \dt.
\]
For any $\bfw\in L^2(\Omega)^d$ and $\bfv\in \bfVN$, we have
\[
 \bfw = \bfw - \mathcal{P}_{\bfVN   }(\bfw) + \mathcal{P}_{\bfVN   }(\bfw)  =\bfw-\bfv - \mathcal{P}_{\bfVN   }(\bfw-\bfv) + \mathcal{P}_{\bfVN   }(\bfw).
\]
Using the fact that $\mathcal{P}_{\bfVN   }$ is an orthogonal projection, we have
\[
 \Vert \bfw-\bfv - \mathcal{P}_{\bfVN   }(\bfw-\bfv) \Vert_{L^2(\Omega)^d} \le \Vert \bfw-\bfv\Vert_{L^2(\Omega)^d},
\]
which leads to
\[
 \Vert \bfw \Vert_{L^2(\Omega)^d} \le \Vert \bfw-\bfv\Vert_{L^2(\Omega)^d} + \Vert \mathcal{P}_{\bfVN   }(\bfw)\Vert_{L^2(\Omega)^d},
\]
giving
\[
 \forall \bfw\in L^2(\Omega)^d,\ \forall \bfv\in \bfVN,\ \Vert \bfw \Vert_{L^2(\Omega)^d}^2 \le 2\Vert \bfw-\bfv\Vert_{L^2(\Omega)^d}^2 + 2\Vert \mathcal{P}_{\bfVN   }(\bfw)\Vert_{L^2(\Omega)^d}^2.
\]
We apply the preceding inequality, letting $\bfw = \tilde  \bfu_\nstep(t+\tau) - \tilde  \bfu_\nstep(t)$ and $\bfv =  \bfu_\nstep(t+\tau) -  \bfu_\nstep(t)$ and we integrate on $t\in(0,T-\tau)$. Setting
\[ 
 B_\nstep(\tau) =  \int_0^{T-\tau} \Vert (\tilde \bfu_\nstep -\bfu_\nstep)(t+\tau) - (\tilde \bfu_\nstep-\bfu_\nstep)(t) \Vert_2^2 \dt
\]
we obtain
\begin{equation}\label{eq:majantau}
A_N(\tau)  \le 2 B_\nstep(\tau) +  2 \int_0^{T-\tau} \| \mathcal{P}_{\bfVN   }(\tilde \bfu_\nstep(t+\tau) -  \tilde \bfu_\nstep(t))\|_{L^2(\Omega)^d}^2 \dt.
\end{equation}

Let $\zeta > 0$ be given.

From \eqref{eq:difftildeuu_estfinite}, we have $B_\nstep(\tau)\le 4\cter{cste:est}^2 \deltat_N$. Let $N_B\ge 1$ be such that $ 4\cter{cste:est}^2 \deltat_N \le \zeta$ for all $N\ge N_B$. Since $B_\nstep(\tau) \to 0$ as $\tau \to 0$ for $N=1,\ldots,N_B$, we can choose $\tau_B$ such that, for any $0\le \tau\le \tau_B$ and $N=1,\ldots,N_B$, we have $B_\nstep(\tau) \le \zeta$.

This yields $B_\nstep(\tau)\le \zeta$ for all $N\ge 1$ and any $0\le \tau<\tau_B$.

Our aim is now to use \eqref{eq:f} in Lemma \ref{lem:lionsfinite}, which implies 
\begin{multline*}
 \forall \varepsilon >0,\ \exists C_\varepsilon >0,\ \exists N_\epsilon \ge 1,\ \forall N \ge N_\epsilon,\ \forall \bfw \in \bfX_{\!N}  \\
 \| \mathcal{P}_{\bfVN   }(\bfw)\|_{L^2(\Omega)^d}^2 \le 2\varepsilon^2 \Vert \bfw \Vert_{H_0^1(\Omega)^d}^2 + 2C_\varepsilon^2 |\bfw|_{\ast,N}^2,
\end{multline*}
letting $\bfw =   \tilde  \bfu_\nstep(t+\tau) -  \tilde \bfu_\nstep(t) $.

From \eqref{eq:tildeuu_estfinite}, we have 
\begin{equation}\label{eq:majbrutal}
\int_0^{T-\tau} \Vert \tilde  \bfu_\nstep(t+\tau) -  \tilde \bfu_\nstep(t) \Vert_{H^1_0(\Omega)^d}^2 \dt \le 4 \cter{cste:est}^2.
\end{equation}

We then select $\varepsilon$ such that
\begin{equation}\label{eq:defvareps}
 2\varepsilon^2 4 \cter{cste:est}^2 = \zeta.
\end{equation}
Then there exists $C_\zeta  >0$ and $N_\zeta$ such that for any $N \ge N_\zeta$ and for any $\tau \in (0,T)$ and for any $t \in (0,T-\tau)$,
\begin{equation*}
 \| \mathcal{P}_{\bfVN   }(\tilde \bfu_\nstep(t+\tau) -  \tilde \bfu_\nstep(t))\|_{L^2(\Omega)^d}^2 \le 2\varepsilon^2 \Vert \tilde  \bfu_\nstep(t+\tau) -  \tilde \bfu_\nstep(t) \Vert_{H_0^1(\Omega)^d}^2 + C_\zeta^2 |\tilde  \bfu_\nstep(t+\tau) -  \tilde \bfu_\nstep(t) |_{\ast,N}^2.
\end{equation*}
Integrating the previous relation provides, using \eqref{eq:majbrutal} and \eqref{eq:defvareps},
\begin{multline*}
 \int_0^{T-\tau} \| \mathcal{P}_{\bfVN   }(\tilde \bfu_\nstep(t+\tau) -  \tilde \bfu_\nstep(t))\|_{L^2(\Omega)^d}^2 \dt \le \zeta \\ + 2C_\zeta^2 \int_0^{T-\tau} |\tilde  \bfu_\nstep(t+\tau) -  \tilde \bfu_\nstep(t) |^2_{\ast,N} \dt.
\end{multline*}
Thus, owing to Lemma \ref{lem:transsemidis},  we have for any $N \ge N_\zeta$ and 
for any $\tau \in (0,T)$
\begin{equation*}
\int_0^{T-\tau} \| \mathcal{P}_{\bfVN   }(\tilde \bfu_\nstep(t+\tau) -  \tilde \bfu_\nstep(t))\|_{L^2(\Omega)^d}^2 \dt 
\le\zeta  +  2C_\zeta^2 \cter{cste:esttrans1} \tau (\tau+\deltat_{\! N}),
\end{equation*}
and therefore, for any $ N \ge N_\zeta$ and for any $\tau \in (0,T)$, using \eqref{eq:majantau}
$$
A_\nstep(\tau) \le 2 B_\nstep(\tau) +  2\zeta + 4 C_\zeta^2 \cter{cste:esttrans1} \tau(\tau+\deltat_{\! N}).
$$
We now choose $\tau_C>0$, such that, for any $0\le\tau\le \tau_C$, $4 C_\zeta^2 \cter{cste:esttrans1} \tau(\tau+T) \le \zeta$.

We then get, for  any $ N \ge N_\zeta$ and $0\le \tau \le \min(\tau_B,\tau_C)$,
$$
A_\nstep(\tau) \le 2 \zeta +  2\zeta + \zeta = 5\zeta.
$$
Since  $A_\nstep(\tau) \to 0$ as $\tau \to 0$ for $N=1,\ldots,N_\zeta$, we can choose $\tau_D$ such that, for any $0\le \tau\le \tau_D$ and $N=1,\ldots,N_\zeta$, we have $A_\nstep(\tau) \le 5\zeta$.
\

We then obtain that $A_\nstep(\tau) \le 5 \zeta$ for any $ \tau \in [0, \min(\tau_B, \tau_C,\tau_D)] $ and $N\ge 1$.
 The proof of \eqref{eq:trans-utildefinite} is thus complete.
\end{proof}

\subsection{Convergence of the projection scheme to a weak solution}\label{sec:convweaksol}

The convergence proof given below relies on the following result which is a consequence of the Kolmogorov theorem, as noticed in \cite[Corollaire 4.41]{gallouet:cel-01196782}. 
\begin{theorem}\label{thm:compactness1}
Let $1 \le p < + \infty$.
 Let $B$ be a Banach space and let $X$ be a  Banach space compactly embedded in $B$. Let $(u_N)_{N \ge 1}$ be a sequence of $L^p(0,T;B)$ satisfying the following conditions:
\begin{enumerate}
\item The sequence $(u_N)_{N \ge 1}$ is bounded in $L^p(0,T;B)$.
\item The sequence $( u_N )_{N \ge 1}$ is bounded in $L^1(0,T;X)$.
\item  The sequence $(u_N)_{N \ge 1}$ satifies 
 \begin{equation}\label{eq:hyptimetranslate}
    \int_0^{T-\tau} \Vert u_{N}(t+\tau) - u_{N}(t) \Vert_{B}^p \dt \to 0 \mbox{ as } \tau \to 0,
\end{equation}
uniformly with respect to $N \ge 1$.
\end{enumerate}
Then the sequence $(u_N)_{N \ge 1}$ is relatively compact in $L^p(0,T;B)$.
\end{theorem}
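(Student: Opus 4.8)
The plan is to recognize Theorem~\ref{thm:compactness1} as the Bochner-valued version of the Kolmogorov--Riesz--Fréchet compactness criterion (of Aubin--Lions--Simon type), and to deduce it from the following characterization (see \cite[Corollaire 4.41]{gallouet:cel-01196782}): a family $\mathcal{F}\subset L^p(0,T;B)$ is relatively compact in $L^p(0,T;B)$ if and only if
\begin{itemize}
 \item[(i)] for every $0<t_1<t_2<T$, the set $\big\{\int_{t_1}^{t_2} f(t)\,\mathrm{d}t : f\in\mathcal{F}\big\}$ is relatively compact in $B$;
 \item[(ii)] $\displaystyle\int_0^{T-\tau}\|f(t+\tau)-f(t)\|_B^p\,\mathrm{d}t \to 0$ as $\tau\to 0$, uniformly with respect to $f\in\mathcal{F}$.
\end{itemize}
Condition (ii) is exactly hypothesis~3 of the theorem. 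For condition (i), since $t\mapsto u_N(t)$ lies in $L^1(0,T;X)$ one has $\big\|\int_{t_1}^{t_2}u_N(t)\,\mathrm{d}t\big\|_X \le \int_{t_1}^{t_2}\|u_N(t)\|_X\,\mathrm{d}t \le \|u_N\|_{L^1(0,T;X)}$, which is bounded uniformly in $N$ by hypothesis~2; hence $\{\int_{t_1}^{t_2}u_N\,\mathrm{d}t : N\ge 1\}$ is a bounded subset of $X$ and therefore relatively compact in $B$, because the embedding $X\hookrightarrow B$ is compact. This gives (i), and the claimed relative compactness follows. (Hypothesis~1 is consistent with, and implied by, (i)--(ii), so it plays no extra role; it is of course what is verified in practice.)

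If a self-contained argument is preferred to quoting the above criterion, I would reprove it by time-mollification. For $h\in(0,T)$ set $v_N^h(t)=\frac1h\int_t^{t+h}u_N(s)\,\mathrm{d}s$ for $t\in(0,T-h)$. By Jensen's inequality and Fubini,
\begin{multline*}
 \|v_N^h-u_N\|_{L^p(0,T-h;B)}^p \le \frac1h\int_0^h\Big(\int_0^{T-h}\|u_N(t+\sigma)-u_N(t)\|_B^p\,\mathrm{d}t\Big)\mathrm{d}\sigma \\ \le \sup_{0\le\sigma\le h}\int_0^{T-\sigma}\|u_N(t+\sigma)-u_N(t)\|_B^p\,\mathrm{d}t,
\end{multline*}
which tends to $0$ as $h\to0$, uniformly in $N$, by hypothesis~3. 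For fixed $h$, one has $\|v_N^h(t)\|_X\le \frac1h\|u_N\|_{L^1(0,T;X)}$, so $\{v_N^h(t):N\ge1\}$ is bounded in $X$, hence relatively compact in $B$; moreover $\|v_N^h(t+\sigma)-v_N^h(t)\|_B\le h^{-1/p}\big(\int_0^{T-\sigma}\|u_N(s+\sigma)-u_N(s)\|_B^p\,\mathrm{d}s\big)^{1/p}$, so the maps $t\mapsto v_N^h(t)$ are equicontinuous, uniformly in $N$. By the Arzelà--Ascoli theorem $\{v_N^h\}_N$ is relatively compact in $C([a,b];B)$ for every $[a,b]\subset(0,T-h)$, hence in $L^p(a,b;B)$. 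Combining this with the uniform approximation $v_N^h\to u_N$ and a diagonal argument over $h\to0$ and over an exhaustion of $(0,T)$ by subintervals, while using hypothesis~1 (and the $L^\infty(0,T-h;B)$ bound on $v_N^h$) to control the contributions near the endpoints, one concludes that $\{u_N\}$ is totally bounded, hence relatively compact, in the complete space $L^p(0,T;B)$.

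The main obstacle in this self-contained route is precisely the last point: the time-translates and the time-averaged functions are only controlled on compact subintervals $[a,b]\subset\subset(0,T)$, and upgrading ``relatively compact in $L^p(a,b;B)$ for every $[a,b]$'' to ``relatively compact in $L^p(0,T;B)$'' requires controlling the mass of $u_N$ in the time-boundary layers $(0,\varepsilon)\cup(T-\varepsilon,T)$ uniformly in $N$. This is exactly where the compactness of the embedding $X\hookrightarrow B$, through hypothesis~2 and condition (i), is indispensable --- a bare $L^p(0,T;B)$ bound (hypothesis~1) does not suffice for it. Everything else is routine bookkeeping, so in the paper I would simply invoke the criterion and check (i)--(ii) as above.
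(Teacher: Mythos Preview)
Your proposal is correct and aligns with the paper: the paper does not give a proof of Theorem~\ref{thm:compactness1} but simply presents it as ``a consequence of the Kolmogorov theorem, as noticed in \cite[Corollaire 4.41]{gallouet:cel-01196782}'', which is precisely the criterion you invoke and whose conditions (i)--(ii) you verify. Your additional self-contained sketch via time-mollification goes beyond what the paper provides; the endpoint discussion there is slightly muddled (in Simon's criterion condition (i) is usually stated for all $0\le t_1<t_2\le T$, and the tail control near $t=0,T$ comes from the translate condition (ii) applied to the zero-extension, not from the compact embedding), but this does not affect the validity of your primary argument.
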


Let us now detail the way this result is used.
By Lemma \ref{lem:weakcv}, up to a subsequence, the sequence 
$(\tilde \bfu_N)_{N \ge 1}$ weakly converges to some limit $\bar \bfu$ in $L^2(0,T;L^2(\Omega)^d)$,  and owing to \eqref{eq:difftildeuu_estfinite}, so does the sequence $(\bfu_N)_{N \ge 1}$.
There remains to check that $\bar \bfu$ is a weak solution to \eqref{pb:cont} in the sense of Definition \ref{def:weaksol}, by passing to the limit in the scheme with suitable test functions. This is possible using the compactness theorem \ref{thm:compactness1} and the time translate estimate lemma \ref{lem:trans-utildefinite}.
\begin{theorem}\label{thm:cvscheme}
Under Assumptions \eqref{hyp:T-Omega}, \eqref{hyp:f-u0}, let $(\delta\!t_N,\bfX_{\!N},M_N,\Pi_N)_{N\ge 1}$ be a sequence such that \eqref{hyp:timediscfinite}, \eqref{hyp:X}, \eqref{hyp:M}, \eqref{hyp:PNexist}, \eqref{hyp:PNconv}, \eqref{hyp:Mlim} hold. Then there exists a subsequence samely denoted,  and a function $\bar u\in L^2(0,T;H_0^1(\Omega)^d\cap\bfV(\Omega))$ $\cap$ $L^\infty(0,T;L^2(\Omega)^d)$,  such that the corresponding sequences  $(\tilde \bfu_N)_{N \ge 1}$ and $(\bfu_N)_{N \ge 1}$ defined by the incremental projection scheme \eqref{eq:semidiscretefinite}  converge to $\bar \bfu$ strongly in $L^2(0,T;L^2(\Omega)^d)$.
 Moreover the function $\bar \bfu$ is a weak solution to \eqref{pb:cont} in the sense of Definition \ref{def:weaksol}.
 \end{theorem}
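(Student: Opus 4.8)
The plan is in three stages: extract a strongly convergent subsequence, pass to the limit against separated test functions, and conclude by density.

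\medskip
\emph{Step 1 (strong compactness).} I would apply Theorem~\ref{thm:compactness1} with $p=2$, $B=L^2(\Omega)^d$ and $X=H^1_0(\Omega)^d$, which is compactly embedded in $B$ by Rellich's theorem. By Lemma~\ref{lem:estfinite} the sequence $(\tilde\bfu_\nstep)_{N\ge1}$ is bounded in $L^\infty(0,T;L^2(\Omega)^d)\subset L^2(0,T;L^2(\Omega)^d)$ and in $L^2(0,T;H^1_0(\Omega)^d)\subset L^1(0,T;X)$, and by Lemma~\ref{lem:trans-utildefinite} its time translates are uniformly small; hence, starting from the subsequence furnished by Lemma~\ref{lem:weakcv}, I can extract a further subsequence along which $\tilde\bfu_\nstep\to\bar\bfu$ strongly in $L^2(0,T;L^2(\Omega)^d)$. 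Uniqueness of weak limits gives $\bar\bfu=\bfu$, the function of Lemma~\ref{lem:weakcv}, so $\bar\bfu\in L^\infty(0,T;L^2(\Omega)^d)\cap L^2(0,T;H^1_0(\Omega)^d\cap\bfV(\Omega))$; and since $\|\bfu_\nstep-\tilde\bfu_\nstep\|_{L^2(0,T;L^2(\Omega)^d)}\le\cter{cste:est}\sqrt{\delta\!t_N}\to0$ by \eqref{eq:difftildeuu_estfinite}, the corrected velocities $\bfu_\nstep$ also converge to $\bar\bfu$ strongly in $L^2(0,T;L^2(\Omega)^d)$.

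\medskip
\emph{Step 2 (passing to the limit).} Fix $\bfvarphi\in\bfW$ and $\psi\in C^\infty_c([0,T))$, and test \eqref{eq:finitepre} at step $n$ with $\bfv=\Pi_N\bfvarphi\in\bfE_N\subset\bfX_{\!N}$. Since $\Pi_N\bfvarphi\in\bfVN$ and $p_N^n\in M_N$, the term $(\nabla p_N^n,\Pi_N\bfvarphi)$ vanishes; writing $\tilde\bfu_N^{n+1}=\bfu_N^{n+1}+\delta\!t_N\nabla(p_N^{n+1}-p_N^n)$ and using $\Pi_N\bfvarphi\in\bfVN$ again, one gets $(\tilde\bfu_N^{n+1}-\bfu_N^n,\Pi_N\bfvarphi)=(\bfu_N^{n+1}-\bfu_N^n,\Pi_N\bfvarphi)$, so the discrete time derivative telescopes in the corrected velocities alone. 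Multiplying by $\psi(t_N^{n+1})$, summing over $n\in\llbracket0,N-1\rrbracket$ and carrying out an Abel summation in time, the resulting Riemann sums converge: the time-derivative term tends to $-\int_0^T(\bfu,\partial_t(\psi\bfvarphi))\dt-\psi(0)(\bfu_0,\bfvarphi)$ (using $\bfu_N^0=\mathcal{P}_{\bfVN}\bfu_0$, hence $(\bfu_N^0,\Pi_N\bfvarphi)=(\bfu_0,\Pi_N\bfvarphi)$, and $\Pi_N\bfvarphi\to\bfvarphi$ in $L^2(\Omega)^d$); the diffusion term to $\int_0^T\int_\Omega\gradi\bfu:\gradi(\psi\bfvarphi)\dx\dt$ (weak convergence of $\tilde\bfu_\nstep$ in $L^2(0,T;H^1_0(\Omega)^d)$ against $\gradi\Pi_N\bfvarphi\to\gradi\bfvarphi$ in $L^2(\Omega)^{d\times d}$); the source term to $\int_0^T\langle\bff,\psi\bfvarphi\rangle\dt$; and the convective term to $\int_0^T b(\bfu,\bfu,\psi\bfvarphi)\dt$, which equals $\int_0^T((\bfu\cdot\gradi)\bfu,\psi\bfvarphi)\dt$ by \eqref{eq:identityconv} since $\dive\bfu=0$. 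This establishes \eqref{weaksol} for every test function $\bfv(t,\bfx)=\psi(t)\bfvarphi(\bfx)$.

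\medskip
\emph{Step 3 (density) and main obstacle.} To reach all $\bfv\in C^\infty_c(\Omega\times[0,T))^d$ with $\dive\bfv=0$, I would use linearity, the density \eqref{hyp:PNexist} of $\bfW$ in $\bfV(\Omega)\cap H^1_0(\Omega)^d$, and the continuity of both sides of \eqref{weaksol} with respect to $\bfv$ for a norm controlled by that of $L^2(0,T;H^1_0(\Omega)^d)\cap C^1([0,T];L^2(\Omega)^d)$ (for the convective term one bounds $b(\bfu,\bfu,\bfv)=-b(\bfu,\bfv,\bfu)$ by $\|\gradi\bfv\|_{L^\infty}\|\bfu\|_{L^2(0,T;L^2(\Omega)^d)}^2$), approximating an arbitrary divergence-free $\bfv$ uniformly in $t$ by finite sums $\sum_k\psi_k\bfvarphi_k$ with $\bfvarphi_k\in\bfW$. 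The delicate point is the convective term: it is exactly what forces the strong $L^2(0,T;L^2(\Omega)^d)$ compactness of $(\tilde\bfu_\nstep)$ obtained through Lemmas~\ref{lem:transsemidis}, \ref{lem:lionsfinite} and \ref{lem:trans-utildefinite}; once this is granted, one must still absorb the one-step shift between $\tilde\bfu_N^n$ and $\tilde\bfu_N^{n+1}$ inside $b$ (harmless because $\delta\!t_N\to0$ and the sequence is relatively compact in $L^2(0,T;L^2(\Omega)^d)$) and justify the passage to the limit in $\int_0^T b(\tilde\bfu_\nstep,\tilde\bfu_\nstep,\psi\Pi_N\bfvarphi)\dt$ by splitting $b$ into its two antisymmetric halves and combining the strong $L^2$ (indeed, by interpolating $L^\infty(0,T;L^2(\Omega)^d)$ with $L^2(0,T;H^1_0(\Omega)^d)$, strong $L^q$ spacetime convergence for some $q>2$) convergence of $\tilde\bfu_\nstep$ with the weak $L^2$ convergence of $\gradi\tilde\bfu_\nstep$ and the uniform $L^\infty$ bound on $\Pi_N\bfvarphi$ from \eqref{hyp:PNconv}. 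A subsidiary, essentially bookkeeping, difficulty is that admissible discrete test functions are available only as images under $\Pi_N$ of elements of $\bfW$ — which is precisely what makes the pressure term disappear and what necessitates the density argument of this step.
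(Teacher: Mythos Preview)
Your proposal is correct and follows essentially the same route as the paper: strong $L^2$ compactness via Theorem~\ref{thm:compactness1} combined with Lemma~\ref{lem:trans-utildefinite}, testing \eqref{eq:finitepre} with $\psi(t_N^{n+1})\Pi_N\bfvarphi$ so that the pressure term vanishes and the discrete time derivative telescopes in $\bfu_N^n$, then passing to the limit term by term (splitting the convective term into its two skew-symmetric halves and controlling the one-step time shift via the time-translate estimate and the $\Vert\cdot\Vert_\bfE$ bound on $\Pi_N\bfvarphi$), and finally concluding by density. One minor inconsistency: in Step~3 you assert continuity of the convective term in an $H^1_0$-based norm but justify it with a $\|\gradi\bfv\|_{L^\infty}$ bound; the $H^1_0$ continuity actually holds via $|b(\bfu,\bfu,\bfv)|\le\|\bfu\|_{L^4}^2\|\gradi\bfv\|_{L^2}$ and $\bfu\in L^2(0,T;L^4(\Omega)^d)$, which is what the paper uses.
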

 \begin{proof}
 We proceed in several steps.
 \begin{itemize}
 \item \textit{Step 1: compactness and convergence in $L^2$.}
Using Lemma \ref{lem:trans-utildefinite}, we can apply Theorem \ref{thm:compactness1}. We get the existence of $\bar \bfu$ in $L^2(0,T;L^2(\Omega)^d)$ and of a subsequence of  $(\delta\!t_N,\bfX_{\!N},M_N,\Pi_N)_{N\ge 1}$, still denoted  $(\delta\!t_N,\bfX_{\!N},M_N,\Pi_N)_{N\ge 1}$, such that the corresponding sequences $(\bfu_\nstep)_{\nstep \ge 1}$ and $(\tilde \bfu_\nstep)_{\nstep \ge 1}$ converge to  $\bar \bfu$ in $L^2(0,T;L^2(\Omega)^d)$. Owing to Lemma \ref{lem:weakcv},
we get that $\bar \bfu\in L^\infty(0,T;L^2(\Omega)^d) \cap L^2(0,T;H_0^1(\Omega)^d\cap\bfV(\Omega))$.
 \item \textit{Step 2: convergence towards a weak solution.}
There remains to show that $\bar \bfu$ is a weak solution in the sense of Definition \ref{def:weaksol} and in particular that $\bar \bfu $ satisfies \eqref{weaksol}.
Let $ \psi \in C_c^\infty ( [0,T))$ and  $\bfvarphi \in \bfW$. Let $\bfv $ be the function defined by $\bfv(t,\bfx)= \psi(t) \bfvarphi(\bfx)$ for any $(t,\bfx)\in[0,T)\times\Omega$. Let $N\ge 1$.
Let $(\bfv_N^n)_{n \in \llbracket 0,N \rrbracket}$ be the sequence of functions of $\bfX_{\!N} \cap \bfVN  $ defined by
$
\bfv_N^n  = \psi(t_N^n) \Pi_N \bfvarphi$ for any $n \in \llbracket 0,N \rrbracket$. Let $\bfv_N : (0,T) \to H_0^1(\Omega)^d$ be defined by
\begin{equation*}
 \bfv_\nstep (t) = \sum_{n=0}^{\nstep-1} \ \Char_{(t_N^\nalgo,t_N^{n+1}]}(t) \bfv_N^{\nalgo+1}.
\end{equation*}
We remark that we have
\begin{multline}
\| \bfv_N - \bfv \|_{L^\infty(0,T;H_0^1(\Omega)^d)} \le \delta\!t_N \| \psi' \|_{L^\infty(0,T)} \| \Pi_N \bfvarphi \|_{\bfE}
\\ + \| \psi \|_{L^\infty(0,T)} \| \Pi_N \bfvarphi - \bfvarphi\|_{H_0^1(\Omega)^d},~\text{for any}~N \ge 1.
\end{multline}
Consequently we obtain using \eqref{hyp:PNconv}  that the sequence $(\bfv_N)_{N \ge 1}$ converges to $\bfv$ in $L^\infty(0,T;H_0^1(\Omega)^d)$. 
Multiplying \eqref{eq:finitepre} by $\deltat_{\! N} \bfv_N^{n+1}$, using the fact  that $(\tilde \bfu_N^{n+1},\bfv_N^{n+1})=(\bfu_N^{n+1},\bfv_N^{n+1})$ for any $n \in \llbracket0,N-1\rrbracket$ and summing over $n \in \llbracket 0,N-1 \rrbracket$ lead to
\begin{multline}\label{eq:weaksolsemidisRvarphi}
\sum_{n=0}^{N-1} (  \bfu_N^{n+1} -  \bfu_N^n , \bfv_N^{n+1} )  + \delta\!t_N \sum_{n=0}^{N-1} b(\tilde \bfu_N^n,\tilde \bfu_N^{n+1},\bfv_N^{n+1})\\ +\int_{0}^T \int_\Omega \gradi \tilde \bfu_N : \gradi \bfv_N \dx \dt = \int_{0}^T \langle \bff , \bfv_N \rangle \dt. 
\end{multline}
Using the fact that $\bfv_N^{N}=0$ in $\Omega$ the first term of the left hand side reads
\begin{multline*}
\sum_{n=0}^{N-1} ( \bfu_N^{n+1} -  \bfu_N^n , \bfv_N^{n+1} ) 
= - \sum_{n=0}^{N-1}( \bfu_N^n,\bfv_N^{n+1}-\bfv_N^n) - ( \bfu_N^0 , \bfv_N^0 ) \\
= - \int_0^T  \psi'(t) (  \bfu_N(t) ,  \Pi_N \bfvarphi) \dt- \psi(0) (\mathcal{P}_{\bfVN} \bfu_0 , \Pi_N \bfvarphi)
\\
= - \int_0^T  \psi'(t) (  \bfu_N(t) ,  \Pi_N \bfvarphi) \dt- \psi(0) ( \bfu_0 , \Pi_N \bfvarphi).
\end{multline*}
Since the sequence $( \bfu_N)_{N \ge 1}$ converges to $\bar \bfu$ in $L^2(0,T;L^2(\Omega)^d)$ and the sequence $(\Pi_N\bfvarphi)_{N \ge 1}$ converges to $ \bfvarphi$ in $L^2(\Omega)^d$, we can then write
\begin{equation}\label{eq:convweakstepfinite1}
\lim_ {\Nti}\sum_{n=0}^{N-1} ( \bfu_N^{n+1}- \bfu_N^n , \bfv_N^{n+1} )  = - \int_0^T ( \bar \bfu, \partial_t \bfv ) \dt - (\bfu_0 , \bfv(0,\cdot) ).
\end{equation}
Using the fact that the initial predicted velocity is zero we have
\begin{multline*}
\delta\!t_N \sum_{n=0}^{N-1} b(\tilde \bfu_N^n,\tilde \bfu_N^{n+1},\bfv_N^{n+1}) = \delta\!t_N \sum_{n=1}^{N-1} b(\tilde \bfu_N^n,\tilde \bfu_N^{n+1}, \bfv_N^{n+1}) \\
 =  \frac{1}{2}\int_{\delta\!t_N}^T (( \tilde \bfu_N(t-\delta\!t_N) \cdot \gradi) \tilde \bfu_N(t) , \bfv_N(t)) \dt \\ - \frac{1}{2}\int_{\delta\!t_N}^T (( \tilde \bfu_N(t-\delta\!t_N) \cdot \gradi) \bfv_N(t) , \tilde \bfu_N(t)) \dt.
\end{multline*}
The first term of the discrete convective term can be expressed by
\begin{multline*}
\int_{\delta\!t_N}^T (( \tilde \bfu_N(t-\delta\!t_N) \cdot \gradi) \tilde \bfu_N(t) , \bfv_N(t)) \dt =\int_{0}^T (( \tilde \bfu_N(t) \cdot \gradi) \tilde \bfu_N(t) , \bfv_N(t)) \dt \\ 
-\int_{0}^{\delta\!t_N} (( \tilde \bfu_N(t) \cdot \gradi) \tilde \bfu_N(t) , \bfv_N(t)) \dt \\
+ \int_{\delta\!t_N}^T ((( \tilde \bfu_N(t-\delta\!t_N) - \tilde \bfu_N(t)) \cdot \gradi) \tilde \bfu_N(t) , \bfv_N(t)) \dt.
\end{multline*}
Using Lemma \ref{lem:estfinite}, we get the following bound for 
the second term of the right hand-side  of the previous identity:
$$
\int_{0}^{\delta\!t_N} (( \tilde \bfu_N(t) \cdot \gradi) \tilde \bfu_N(t) , \bfv_N(t)) \dt \le \cter{cste:est}^2 \| \psi \|_{L^\infty(0,T)} \| \Pi_N \bfvarphi \|_{L^\infty(\Omega)^d} \sqrt{\delta\!t_N}.
$$
Using the fact that the sequence $( \| \Pi_N \bfvarphi \|_{L^\infty(\Omega)^d} )_{N \ge 1}$ is bounded we obtain that this term tends to zero as $N$ tends to infinity. 
Using Lemma \ref{lem:estfinite}, a bound for the third term of the right hand-side of the previous identity is given by
\begin{multline*}
\int_{\delta\!t_N}^T ((( \tilde \bfu_N(t-\delta\!t_N) - \tilde \bfu_N(t)) \cdot \gradi) \tilde \bfu_N(t) , \bfv_N(t)) \dt \\ \le \cter{cste:est} \| \psi \|_{L^\infty(0,T)} \| \Pi_N \bfvarphi \|_{L^\infty(\Omega)^d} \Big( \int_0^{T-\delta\!t_N} \| \tilde \bfu_N(t+\delta\!t_N) - \tilde \bfu_N(t) \|^2_{L^2(\Omega)^d} \Big)^{1/2}.
\end{multline*}
Using  Lemma \ref{lem:trans-utildefinite} and the fact that   $( \| \Pi_N \bfvarphi \|_{L^\infty(\Omega)^d} )_{N \ge 1}$ is bounded we obtain that the preceding term tends to zero as $N$ tends to infinity.
Using the convergence of the sequence $(\tilde \bfu_N)_{N \ge 1}$ in $L^2(0,T;L^4(\Omega)^d)$, the weak convergence of the sequence $(\tilde \bfu_N)_{N \ge 1}$ in $L^2(0,T;H_0^1(\Omega)^d)$ and  the convergence of the sequence $ ( \bfv_N)_{N \ge 1}$ in $L^\infty(0,T;L^4(\Omega)^d)$ we obtain 
$$
\lim_{\Nti} \int_{0}^T (( \tilde \bfu_N(t) \cdot \gradi) \tilde \bfu_N(t) , \bfv_N(t)) \dt  = \int_0^T ( (\bar \bfu \cdot \gradi) \bar \bfu ,  \bfv ) \dt.
$$
The second term of the discrete convective term is such that
\begin{multline*}
\int_{\delta\!t_N}^T (( \tilde \bfu_N(t-\delta\!t_N) \cdot \gradi) \bfv_N(t) , \tilde \bfu_N(t)) \dt =\\
\int_{0}^T (( \tilde \bfu_N(t) \cdot \gradi) \tilde \bfu_N(t) , \bfv_N(t)) \dt 
-\int_{0}^{\delta\!t_N} (( \tilde \bfu_N(t) \cdot \gradi)  \bfv_N(t) , \tilde \bfu_N(t)) \dt \\
+\int_{\delta\!t_N}^T ((( \tilde \bfu_N(t-\delta\!t_N) - \tilde \bfu_N(t)) \cdot \gradi)  \bfv_N(t) , \tilde \bfu_N(t)) \dt.
\end{multline*}
A bound for the second term of the previous identity is obtained by
$$
\int_{0}^{\delta\!t_N} (( \tilde \bfu_N(t) \cdot \gradi)  \bfv_N(t) , \tilde \bfu_N(t)) \dt \le \| \psi \|_{L^\infty(0,T)} \| \Pi_N \bfvarphi \|_{H^1_0(\Omega)^d} \int_0^{\delta\!t_N} \| \tilde \bfu_N(t) \|_{L^4(\Omega)^d}^2 \dt.
$$
Using the fact that the sequence $(\tilde \bfu_N)_{N \ge 1}$ is bounded in $L^2(0,T;L^4(\Omega)^d)$ and the fact the sequence $( \| \Pi_N \bfvarphi \|_{H_0^1(\Omega)^d} )_{N \ge 1}$ is bounded we obtain that this term tends to zero as $N$ tends to infinity. 
The third term of the previous equation can be bounded as follows
\begin{multline*}
\int_{\delta\!t_N}^T ((( \tilde \bfu_N(t-\delta\!t_N) - \tilde \bfu_N(t)) \cdot \gradi)  \bfv_N(t) , \tilde  \bfu_N(t)) \dt \\ \le \cter{cste:est} \| \psi \|_{L^\infty(0,T)} \| \Pi_N \bfvarphi \|_{H_0^1(\Omega)^d} \Big( \int_0^{T-\delta\!t_N} \| \tilde \bfu_N(t+\delta\!t_N) - \tilde \bfu_N(t) \|^2_{L^4(\Omega)^d} \Big)^{1/2} \\
\times \Big( \int_0^{T-\delta\!t_N} \| \tilde \bfu_N(t) \|^2_{L^4(\Omega)^d} \Big)^{1/2}  \\
\le \cter{cste:est} \| \psi \|_{L^\infty(0,T)} \| \Pi_N \bfvarphi \|_{H_0^1(\Omega)^d} \Big( \int_0^{T-\delta\!t_N} \| \tilde \bfu_N(t+\delta\!t_N) - \tilde \bfu_N(t) \|^2_{L^2(\Omega)^d} \Big)^{1/8} \\
\times  \Big( \int_0^{T-\delta\!t_N} \| \tilde \bfu_N(t+\delta\!t_N) - \tilde \bfu_N(t) \|^2_{L^6(\Omega)^d} \Big)^{3/8} \Big( \int_0^{T-\delta\!t_N} \| \tilde \bfu_N(t) \|^2_{L^4(\Omega)^d} \Big)^{1/2}
\end{multline*}
Using the estimates on the translation of the sequence $(\tilde \bfu_N)_{N \ge 1}$ given in Lemma \ref{lem:trans-utildefinite}, the fact that the sequence $(\tilde \bfu_N)_{N \ge 1}$ is bounded in $L^2(0,T;L^6(\Omega)^d)$  and the fact that   $( \| \Pi_N \bfvarphi \|_{H_0^1(\Omega)^d} )_{N \ge 1}$ is bounded we obtain that this term tends to zero as $N$ tends to infinity.
Using the convergence of the sequence $(\tilde \bfu_N)_{N \ge 1}$ in $L^2(0,T;L^4(\Omega)^d)$ and  the convergence of the sequence $ ( \bfv_N)_{N \ge 1}$ in $L^\infty(0,T;H_0^1(\Omega)^d)$ we obtain 
$$
\lim_{\Nti} \int_{0}^T (( \tilde \bfu_N(t) \cdot \gradi)  \bfv_N(t) , \tilde \bfu_N(t)) \dt  = \int_0^T ( (\bar \bfu \cdot \gradi)  \bfv ,  \bar \bfu ) \dt.
$$
We then obtain 
\begin{equation}\label{eq:convweakstepfinite2}
\lim_{\Nti}  \delta\!t_N \sum_{n=0}^{N-1} b(\tilde \bfu_N^n,\tilde \bfu_N^{n+1},\bfv_N^{n+1})  = \int_0^T ( (\bar \bfu \cdot \gradi) \bar \bfu ,  \bfv ) \dt.
\end{equation}
Using the weak convergence of the sequence $(\gradi \tilde \bfu_N)_{N \ge 1}$ in $L^2(0,T;L^2(\Omega)^d)$ and the convergence of the sequence $ ( \gradi \bfv_N)_{N \ge 1}$ in $L^2(0,T;L^2(\Omega)^d)$ we obtain
\begin{equation}\label{eq:convweakstepfinite3}
\lim_{\Nti}\int_{0}^{T} \int_\Omega \gradi \tilde \bfu_N : \gradi \bfv_N \dx \dt = \int_0^T \int_\Omega \gradi \bar \bfu : \gradi \bfv \dx \dt.
\end{equation}
Using  the convergence of the sequence $ (  \bfv_N)_{N \ge 1}$ in $L^2(0,T;L^2(\Omega)^d)$  we obtain
\begin{equation}\label{eq:convweakstepfinite4}
\lim_{\Nti} \int_{0}^T \langle \bff , \bfv_N \rangle \dt = \int_0^T \langle \bff , \bfv \rangle \dt.
\end{equation}
Using \eqref{eq:convweakstepfinite1}-\eqref{eq:convweakstepfinite4} and passing to the limit in \eqref{eq:weaksolsemidisRvarphi} gives the weak sense \eqref{weaksol} with $\bfv$ as test function. We get that \eqref{weaksol} holds for all test functions by density of the set of linear combinations of such functions.
\end{itemize}
\end{proof}

\section{Lowest-order Taylor-Hood finite element approximation}\label{sec:taylorhood}

In this section, we study the case of the lowest-order Taylor-Hood element. This pair is of particular interest, since it is the lowest-order conforming finite element method which is available on simplicial meshes (triangles in 2D, tetrahedra in 3D). It is such that the discrete velocity space $\bfX_{\!N}$ contains continuous piecewise quadratic functions vanishing at the boundary, and the discrete pressure space $M_{\!N}$ contains continuous piecewise affine functions.
\\
Our aim is to show that it is possible to construct an operator $\Pi_N$ satisfying  the convergence assumptions \eqref{hyp:PNexist}-\eqref{hyp:Mlim} done in Section \ref{sec:cvhyp}, under the only standard regularity hypothesis on the meshes that the ratio between exterior and interior diameters of the triangles or tetrahedra is uniformly bounded.  We emphasize that the hypotheses which must be verified in  Section \ref{sec:cvhyp} are not equivalent to the inf-sup condition necessary for schemes coupling the velocities and the pressure (as in \cite{guer2007faedo,fer2018proof}). Notice that the verification of this inf-sup stability is not easy in the case of the Taylor-Hood approximation (see \cite{Bercovier1979ErrorEF}, \cite{Boland1985StableAS}, \cite{Stenberg1984AnalysisOM}, \cite{Brezzi1991StabilityOH}, \cite{Boffi1994STABILITYOH}). 
Indeed, this stability condition does not hold on general 2D and 3D simplicial meshes, only assuming the standard regularity hypothesis as recalled above. A few pathological cases can be listed in 2D for meshes containing 1, 2 or 3 triangles and in 3D for meshes containing a series of tetrahedra whose edges are all on the boundary, but these meshes are not relevant in the case of a convergence study.
~
\medskip
~\\
But there also exists standard situations which make the inf-sup condition fail, while considering a convergence study based on a sequence of 3D meshes whose size tends to zero, the regularity property being checked. Indeed, Figure \protect{\ref{fig:3Dmeshes}} shows the case where the mesh includes at least one tetrahedron (denoted $K_0$) whose all the edges are located on the boundary, with one vertex  (denoted by $\bfy_0$) only belonging to $K_0$. Then the inf-sup condition cannot hold. Indeed, for any $\bfv\in \bfX_{\!N}$, $\bfv = 0$ on $K_0$, and for  $q\in M_N$ such that $q(\bfy_0) = 1+c$ and  $q(\bfy_i) = c$ for any vertex $\bfy_i\neq\bfy_0$, $c$ being adjusted for yielding null mean value for $q$, then $(\gradi q,\bfv) = 0$.

\begin{figure}[!ht]
  \begin{center}
 \begin{tikzpicture}[scale=1]
  
  \fill[ red!5, opacity=1.] (1,1)--(6,1)--(5,5)--(1,1);
  \fill[ blue!30, opacity=1.] (5.5,1)--(5.9,1.4)--(5.7,1.5)--(5.5,1);
  
  \draw[-, very thick, black](1,1)--(6,1)--(5,5)--(3,6)--(1,1);
  \draw[-, very thick, black](6,1)--(3,6);
  \draw[dotted, very thick, black](1,1)--(5,5);
 \draw[-, blue](5.5,1)--(6,1)--(5.9,1.4)--(5.7,1.5)--(5.5,1);
  \draw[-, blue](6,1)--(5.7,1.5);
  \draw[dotted, blue](5.5,1)--(5.9,1.4);
  
  \draw[stealth-] (5.75,1.1)--(6.5,2);\path(6.7,2.2) node[black]{$K_0$};
  
   \draw[fill, red!30, opacity=1.](5.5,1) circle (0.05);
   \draw[fill, red!30, opacity=1.](6,1) circle (0.05);
   \draw[fill, red!30, opacity=1.](5.9,1.4) circle (0.05);
   \draw[fill, red!30, opacity=1.](5.7,1.5) circle (0.05);
   
%

\path(6.3,1) node[black]{$\bfy_0$};
 
  \end{tikzpicture}
  \caption{An example leading to inf-sup condition failure  for  the lowest-order Taylor-Hood finite element in 3D. }
  \protect{\label{fig:3Dmeshes}}
  \end{center}
\end{figure}
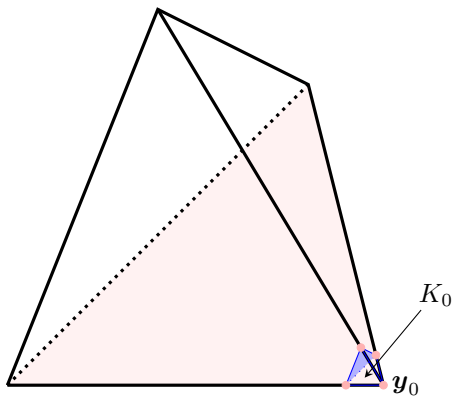

~
\medskip
~\\
Sufficient conditions on the mesh (which then prevent from the cases listed above) are then requested for the proof of the existence of Fortin operators, that are defined as bounded interpolation
operators preserving the discrete divergence of a function. The existence of a Fortin operator is then equivalent to the inf-sup condition (indeed, in the example of Figure \ref{fig:3Dmeshes}, no operator can keep the discrete divergence in the case of a function whose support is included in the corner tetrahedron).  We refer to \cite{Chen2014ASC}, \cite{Mardal2013AUS}, \cite{Falk2008AFO} for the construction of a Fortin operator in the two dimensional case. The extension of the 2D face-bubble Fortin operator to the 3D case has been done in  \cite{Diening2021FortinOF}. It relies on edge-bubble functions, which are modified at the boundary in order that the interpolation vanishes on $\partial \Omega$, under the constraint that each simplex as at least one interior node (this  eliminates the above cases of inf-sup condition failure). 
~
\medskip
~\\
In this paper, instead of constructing a complete Fortin operator, we focus on the only properties needed for the operator $\Pi_N$ involved in the convergence of the projection scheme.
We recall below the definition of the Lagrange interpolator $\Pi_L$  and its approximation properties (which are elementary compared to that of the Scott-Zhang interpolator, and lead to very short proofs). We then adapt and simplify the divergence correcting linear operator $\Pi_D$ provided by  \cite{Diening2021FortinOF} to the case of compact support functions. This enables us to define the operator $\Pi_N$ from $\Pi_L$ and $\Pi_D$ in the case of any simplicial mesh (including all the cases where the inf-sup condition fails).

\subsection{Regular simplicial mesh}

Let us assume that the computational domain $\Omega$ is polygonal ($d = 2$) or polyhedral ($d = 3$).
We define a simplex in dimension $d\ge 1$ as the interior of the convex hull of a given set of $d+1$ points (called its vertices) which are not all contained in the same hyperplane (a simplex is a triangle if $d=2$ and a tetrahedron if $d=3$).
~\\
We consider a regular simplicial mesh $\mathcal{T}$ of $\Omega$ in the usual sense of the finite element literature \cite{ciarlet}, as depicted in Figure \ref{fig:mesh}, which means that:

\begin{itemize}
 \item The family containing all the vertices of the elements of the mesh
is denoted by $(\bfy_i)_{i\in\mathcal{N}}$. The set ${\mathcal N}$ is partitioned into ${\mathcal N} ={\mathcal N}_{\rm int}\cup {\mathcal N}_{\rm ext}$, where $(\bfy_i)_{i \in {\mathcal N}_{\rm ext}}$ is the set of  exterior nodes and  $(\bfy_i)_{i \in {\mathcal N}_{\rm int}}$ is the set of  interior nodes.
\item For any $K\in \mathcal{T}$, the family of the vertices of $K$ is denoted by $(\bfy_i)_{i\in \mathcal{N}_K}$ with  $\mathcal{N}_K\subset \mathcal{N}$ contains $d+1$ elements. 
For any $K \in \mathcal{T}$, we denote by $|K|$ the measure in $\mathbb{R}^d$ of $K$, by $h_K$ the diameter of $K$ and by $\rho_K$ the diameter of the largest ball included in $K$. 
Then, we define the mesh size $h_\mathcal{T}$ and the mesh regularity $\theta_{\mathcal{T}}$ by 
$$ h_{\mathcal{T}}=\max_{K \in \mathcal{T}} h_K, \quad \quad \theta_{\mathcal{T}} = \max_{K \in \mathcal{T}} \frac{ h_K}{\rho_K}. $$ 
\item For any $K\in \mathcal{T}$, the set of the faces of $K$ is denoted by $\mathcal{F}_K$. For any $F\in \mathcal{F}_K$, the family of the vertices of the simplex $F$ is denoted by $(\bfy_i)_{i\in \mathcal{N}_F}$ (then  $\mathcal{N}_F\subset \mathcal{N}_K$ contains $d$ elements).
\item The set  $\mathcal{E}$  of the edges of the elements contains all pairs $\{i,j\}$ such that there exists $K\in\mathcal{T}$ with $\{i,j\}\subset \mathcal{N}_K$. For any $\sigma = \{i,j\}\in \mathcal{E}$, we denote by $\bfy_{\sigma}  = \frac 1 2(\bfy_i + \bfy_j)$. 
For all $K\in \mathcal{T}$, we denote by $\mathcal{E}_K$ the subset of $\mathcal{E}$ containing all the edges of $K$. 
\end{itemize}


For $i \in \mathcal{N}$ (resp.  $\{i,j\}\in \mathcal{E}$), we define $\mathcal{T}_i$ (resp. $\mathcal{T}_{ij}$) as the set of all $K\in \mathcal{T}$ such that $i\in {\mathcal N}_K$ (resp. $\{i,j\}\in \mathcal{E}_K$) and we denote by 
\[
 \omega_i = \bigcup_{K\in \mathcal{T}_i} \overline{K}\hbox{ and }
 \omega_{ij} = \bigcup_{K\in \mathcal{T}_{ij}} \overline{K}.
\]

\begin{figure}[!ht]
  \begin{center}
  \begin{tikzpicture}[scale=.8]
  
   \draw[-, blue!20, pattern=north west lines, pattern color=blue!30, opacity=0.8] (3,1)--(5,1)--(6,2.4)--(5,3.8)--(3,3.8)--(2,2.4); \path(4.4,2.6) node[black]{$\bfy_i$};\path(3,1.8) node[blue]{$\omega_i$};

  \draw[-, very thick, blue](3,1)--(5,1);
  \draw[-, very thick, blue](5,1)--(7,1);
  \draw[-, very thick, blue](3,1)--(2,2.4);
  \draw[-, very thick, blue](3,1)--(4,2.4);
  \draw[-, very thick, blue](5,1)--(4,2.4);
  \draw[-, very thick, blue](5,1)--(6,2.4);
  \draw[-, very thick, blue](7,1)--(6,2.4);
  \draw[-, very thick, blue](7,1)--(8,2.4);

  \draw[-, very thick, blue](2,2.4)--(4,2.4);
  \draw[-, very thick, blue](4,2.4)--(6,2.4);
  \draw[-, very thick, blue](6,2.4)--(8,2.4);
  \draw[-, very thick, blue](2,2.4)--(3,3.8);
  \draw[-, very thick, blue](4,2.4)--(3,3.8);
  \draw[-, very thick, blue](4,2.4)--(5,3.8);
  \draw[-, very thick, blue](6,2.4)--(5,3.8);
  \draw[-, very thick, blue](6,2.4)--(7,3.8);
  \draw[-, very thick, blue](8,2.4)--(7,3.8);

  \draw[-, very thick, blue](3,3.8)--(5,3.8);
  \draw[-, very thick, blue](5,3.8)--(7,3.8);
  \draw[-, very thick, blue](4,5.2)--(3,3.8);
  \draw[-, very thick, blue](4,5.2)--(5,3.8);
  \draw[-, very thick, blue](6,5.2)--(5,3.8);
  \draw[-, very thick, blue](6,5.2)--(7,3.8);

  \draw[-, very thick, blue](4,5.2)--(6,5.2);

   \draw[fill, red!30, opacity=1.](3,1) circle (0.1);
  \draw[fill, red!30, opacity=1.](5,1) circle (0.1);
  \draw[fill, red!30, opacity=1.](7,1) circle (0.1);
  \draw[fill, red!30, opacity=1.](2,2.4) circle (0.1);
  \draw[fill, red!30, opacity=1.](4,2.4) circle (0.1);
  \draw[fill, red!30, opacity=1.](6,2.4) circle (0.1);
  \draw[fill, red!30, opacity=1.](8,2.4) circle (0.1);
  \draw[fill, red!30, opacity=1.](3,3.8) circle (0.1);
  \draw[fill, red!30, opacity=1.](5,3.8) circle (0.1);
  \draw[fill, red!30, opacity=1.](7,3.8) circle (0.1);
  \draw[fill, red!30, opacity=1.](4,5.2) circle (0.1);
  \draw[fill, red!30, opacity=1.](6,5.2) circle (0.1);

   \draw[fill, black, opacity=1.](3,1) circle (0.05);
  \draw[fill, black, opacity=1.](5,1) circle (0.05);
  \draw[fill, black, opacity=1.](7,1) circle (0.05);
  \draw[fill, black, opacity=1.](2,2.4) circle (0.05);
  \draw[fill, black, opacity=1.](4,2.4) circle (0.05);
  \draw[fill, black, opacity=1.](6,2.4) circle (0.05);
  \draw[fill, black, opacity=1.](8,2.4) circle (0.05);
  \draw[fill, black, opacity=1.](3,3.8) circle (0.05);
  \draw[fill, black, opacity=1.](5,3.8) circle (0.05);
  \draw[fill, black, opacity=1.](7,3.8) circle (0.05);
  \draw[fill, black, opacity=1.](4,5.2) circle (0.05);
  \draw[fill, black, opacity=1.](6,5.2) circle (0.05);
  
   \draw[fill, black, opacity=1.](4,1) circle (0.05);
   \draw[fill, black, opacity=1.](2.5,1.7) circle (0.05);
   \draw[fill, black, opacity=1.](3.5,1.7) circle (0.05);
   \draw[fill, black, opacity=1.](6,1) circle (0.05);
   \draw[fill, black, opacity=1.](4.5,1.7) circle (0.05);
   \draw[fill, black, opacity=1.](5.5,1.7) circle (0.05);
   \draw[fill, black, opacity=1.](6.5,1.7) circle (0.05);
   \draw[fill, black, opacity=1.](7.5,1.7) circle (0.05);

   \draw[fill, black, opacity=1.](3,2.4) circle (0.05);
   \draw[fill, black, opacity=1.](2.5,3.1) circle (0.05);
   \draw[fill, black, opacity=1.](3.5,3.1) circle (0.05);
   \draw[fill, black, opacity=1.](5,2.4) circle (0.05);
   \draw[fill, black, opacity=1.](4.5,3.1) circle (0.05);
   \draw[fill, black, opacity=1.](5.5,3.1) circle (0.05);
   \draw[fill, black, opacity=1.](7,2.4) circle (0.05);
   \draw[fill, black, opacity=1.](6.5,3.1) circle (0.05);
   \draw[fill, black, opacity=1.](7.5,3.1) circle (0.05);

   \draw[fill, black, opacity=1.](4,3.8) circle (0.05);
   \draw[fill, black, opacity=1.](3.5,4.5) circle (0.05);
   \draw[fill, black, opacity=1.](6,3.8) circle (0.05);
   \draw[fill, black, opacity=1.](4.5,4.5) circle (0.05);
   \draw[fill, black, opacity=1.](5.5,4.5) circle (0.05);
   \draw[fill, black, opacity=1.](6.5,4.5) circle (0.05);

   \draw[fill, black, opacity=1.](5,5.2) circle (0.05);

  \end{tikzpicture}
  \hspace{.5cm}
  \begin{tikzpicture}[scale=.8]
  
   \draw[fill=none](5,4.685) circle (0.5);
   \draw[stealth-stealth](4.5,4.685)--(5.5,4.685);;\path(5,4.8) node[black]{$\rho_K$};
   \draw[stealth-stealth](4,5.3)--(6,5.3);\path(5,5.5) node[black]{$h_K$};\path(4.4,5) node[black]{$K$};
   \draw[-,blue!20, pattern=north east lines, pattern color=black!30, opacity=0.8] (5,3.8)--(6,5.2)--(4,5.2);
   
   \draw[-, blue!20, pattern=north west lines, pattern color=blue!30, opacity=0.8] (5,1)--(6,2.4)--(5,3.8)--(4,2.4); \path(3.6,2.6) node[black]{$\bfy_i$};\path(6.4,2.6) node[black]{$\bfy_j$};\path(5,1.8) node[blue]{$\omega_{ij}$};
   
   \path(5,2.6) node[black]{$\bfy_{\{i,j\}}$};
   
   \draw[fill, black](5,2.4) circle (0.05);
   
  \draw[-, very thick, blue](3,1)--(5,1);
  \draw[-, very thick, blue](5,1)--(7,1);
  \draw[-, very thick, blue](3,1)--(2,2.4);
  \draw[-, very thick, blue](3,1)--(4,2.4);
  \draw[-, very thick, blue](5,1)--(4,2.4);
  \draw[-, very thick, blue](5,1)--(6,2.4);
  \draw[-, very thick, blue](7,1)--(6,2.4);
  \draw[-, very thick, blue](7,1)--(8,2.4);

  \draw[-, very thick, blue](2,2.4)--(4,2.4);
  \draw[-, very thick, blue](4,2.4)--(6,2.4);
  \draw[-, very thick, blue](6,2.4)--(8,2.4);
  \draw[-, very thick, blue](2,2.4)--(3,3.8);
  \draw[-, very thick, blue](4,2.4)--(3,3.8);
  \draw[-, very thick, blue](4,2.4)--(5,3.8);
  \draw[-, very thick, blue](6,2.4)--(5,3.8);
  \draw[-, very thick, blue](6,2.4)--(7,3.8);
  \draw[-, very thick, blue](8,2.4)--(7,3.8);

  \draw[-, very thick, blue](3,3.8)--(5,3.8);
  \draw[-, very thick, blue](5,3.8)--(7,3.8);
  \draw[-, very thick, blue](4,5.2)--(3,3.8);
  \draw[-, very thick, blue](4,5.2)--(5,3.8);
  \draw[-, very thick, blue](6,5.2)--(5,3.8);
  \draw[-, very thick, blue](6,5.2)--(7,3.8);

  \draw[-, very thick, blue](4,5.2)--(6,5.2);

   \draw[fill, red!30, opacity=1.](3,1) circle (0.1);
  \draw[fill, red!30, opacity=1.](5,1) circle (0.1);
  \draw[fill, red!30, opacity=1.](7,1) circle (0.1);
  \draw[fill, red!30, opacity=1.](2,2.4) circle (0.1);
  \draw[fill, red!30, opacity=1.](4,2.4) circle (0.1);
  \draw[fill, red!30, opacity=1.](6,2.4) circle (0.1);
  \draw[fill, red!30, opacity=1.](8,2.4) circle (0.1);
  \draw[fill, red!30, opacity=1.](3,3.8) circle (0.1);
  \draw[fill, red!30, opacity=1.](5,3.8) circle (0.1);
  \draw[fill, red!30, opacity=1.](7,3.8) circle (0.1);
  \draw[fill, red!30, opacity=1.](4,5.2) circle (0.1);
  \draw[fill, red!30, opacity=1.](6,5.2) circle (0.1);

   \draw[fill, black, opacity=1.](3,1) circle (0.05);
  \draw[fill, black, opacity=1.](5,1) circle (0.05);
  \draw[fill, black, opacity=1.](7,1) circle (0.05);
  \draw[fill, black, opacity=1.](2,2.4) circle (0.05);
  \draw[fill, black, opacity=1.](4,2.4) circle (0.05);
  \draw[fill, black, opacity=1.](6,2.4) circle (0.05);
  \draw[fill, black, opacity=1.](8,2.4) circle (0.05);
  \draw[fill, black, opacity=1.](3,3.8) circle (0.05);
  \draw[fill, black, opacity=1.](5,3.8) circle (0.05);
  \draw[fill, black, opacity=1.](7,3.8) circle (0.05);
  \draw[fill, black, opacity=1.](4,5.2) circle (0.05);
  \draw[fill, black, opacity=1.](6,5.2) circle (0.05);

   \draw[fill, black, opacity=1.](4,1) circle (0.05);
   \draw[fill, black, opacity=1.](2.5,1.7) circle (0.05);
   \draw[fill, black, opacity=1.](3.5,1.7) circle (0.05);
   \draw[fill, black, opacity=1.](6,1) circle (0.05);
   \draw[fill, black, opacity=1.](4.5,1.7) circle (0.05);
   \draw[fill, black, opacity=1.](5.5,1.7) circle (0.05);
   \draw[fill, black, opacity=1.](6.5,1.7) circle (0.05);
   \draw[fill, black, opacity=1.](7.5,1.7) circle (0.05);

   \draw[fill, black, opacity=1.](3,2.4) circle (0.05);
   \draw[fill, black, opacity=1.](2.5,3.1) circle (0.05);
   \draw[fill, black, opacity=1.](3.5,3.1) circle (0.05);
   \draw[fill, black, opacity=1.](5,2.4) circle (0.05);
   \draw[fill, black, opacity=1.](4.5,3.1) circle (0.05);
   \draw[fill, black, opacity=1.](5.5,3.1) circle (0.05);
   \draw[fill, black, opacity=1.](7,2.4) circle (0.05);
   \draw[fill, black, opacity=1.](6.5,3.1) circle (0.05);
   \draw[fill, black, opacity=1.](7.5,3.1) circle (0.05);

   \draw[fill, black, opacity=1.](4,3.8) circle (0.05);
   \draw[fill, black, opacity=1.](3.5,4.5) circle (0.05);
   \draw[fill, black, opacity=1.](6,3.8) circle (0.05);
   \draw[fill, black, opacity=1.](4.5,4.5) circle (0.05);
   \draw[fill, black, opacity=1.](5.5,4.5) circle (0.05);
   \draw[fill, black, opacity=1.](6.5,4.5) circle (0.05);

   \draw[fill, black, opacity=1.](5,5.2) circle (0.05);

  \end{tikzpicture}
  
  \caption{An interior part of a 2D mesh. The degrees of freedom for the pressure are in pink, those for the components of the velocity are in  black. }\label{fig:mesh}
  \end{center}
\end{figure}
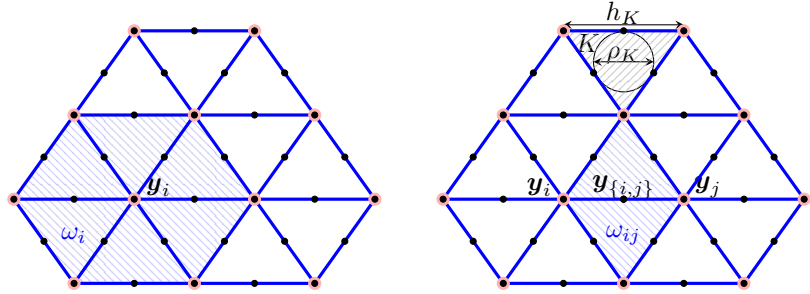

In the whole section \ref{sec:taylorhood}, we denote by $C_i$, with $i\in\mathbb{N}$, non negative real values which may depend on $d$ and on $\Omega$, and increasingly on $\theta_{\mathcal{T}}$.
The following lemma is a direct consequence of the definition of $\theta_{\mathcal{T}}$.
\begin{lemma}\label{lem:reguomegaij}
 There exists $\ctel{cst:regumesh}$ such that
 \[
  {\rm card}\mathcal{T}_{ij} \le \cter{cst:regumesh},
 \]

 \[
 \forall K\in \mathcal{T}_{ij},\  \frac 1 {\cter{cst:regumesh}} h_K^d \le |\omega_{ij}| \le \cter{cst:regumesh} h_K^d,
 \]
 \begin{equation}\label{eq:diamomegaij}
  \forall K\in \mathcal{T}_{ij},\  {\rm diam}(\omega_{ij}) \le \cter{cst:regumesh} h_K.
 \end{equation}

\end{lemma}

\subsection{Discrete spaces and bases}

 We introduce the set $\mathbb{P}^k(K)$ of the polynomials over $K \in \mathcal{T}$  
of degree less than or
equal to $k$, regardless of the space dimension.  We define
$$
\mathbb{P}^k(\mathcal{T}) = \{ p \in C(\overline{\Omega},\mathbb{R}) ~\text{such that}~p_{|K} \in \mathbb{P}^k(K)~\text{for any}~K \in \mathcal{T} \}.
$$

\medskip

We denote by $(\varphi_i)_{i \in \mathcal{N}}$ the nodal basis of $\mathbb{P}^1(\mathcal{T})$, that is, 
for any $i\in \mathcal{N}$, the element $\varphi_i$ of $\mathbb{P}^1(\mathcal{T})$ 
such that $\varphi_i(\bfy_i) = 1$ and $\varphi_i(\bfy_j) = 0$ for all $j\in \mathcal{N}\setminus\{i\}$.

 Using the properties of the family $(\varphi_i)_{i \in \mathcal{N}}$ we have 
\begin{equation}\label{eq:supportvarphi}
\overline{\{ \bfx \in \Omega~\text{such that}~\varphi_i(\bfx) \ne 0 \} } = \omega_i~\text{for any}~i \in \mathcal{N}.
\end{equation}

Then the nodal basis of $\mathbb{P}^2(\mathcal{T})$ is given by the family 
$\Big((\varphi_i(2\varphi_i-1))_{i\in\mathcal{N}},(4\varphi_i \varphi_j)_{\{i,j\}\in\mathcal{E}}\Big)$.
Denoting this family by $(\phi_i)_{i \in \mathcal{N}\cup\mathcal{E}}$,
for any $i\in \mathcal{N}\cup\mathcal{E}$, we have that $\phi_i(\bfy_i) = 1$ and $\phi_i(\bfy_j) = 0$ for all $j\in (\mathcal{N}\cup\mathcal{E})\setminus\{i\}$.

Using \eqref{eq:supportvarphi} and using \cite[Proposition 11.6]{Ern2021FiniteEI} we can state the following lemma.

\begin{lemma}
There exists $\ctel{cste:estgradvarphi}$ such that 
\begin{equation}\label{eq:estgradivarphi}
\forall K\in\mathcal{T},\  \max_{i \in \mathcal{N}_K} \| \nabla\varphi_i \|_{L^\infty(\omega_i)^d} \le \frac{\cter{cste:estgradvarphi} }{h_K}
\end{equation}
which leads that there exists $\ctel{cst:majgradp2}$ 
\begin{equation}\label{eq:estgradphiP2}
 \forall K\in\mathcal{T},\  \forall \bfx\in K, \ \forall i\in\mathcal{N}_K\cup\mathcal{E}_K,\ |\nabla\phi_i(\bfx)|\le \frac {\cter{cst:majgradp2}} {h_K}.
\end{equation}

\end{lemma}

The spaces of approximation for the velocity and the pressure are defined by
\begin{equation}\label{eq:defxnmn}
 \bfX_{\!N} = \mathbb{P}^2(\mathcal{T})^d \cap H_0^1(\Omega)^d
\quad \text{and} \quad M_N = \mathbb{P}^1(\mathcal{T}) \cap L_0^2(\Omega).
\end{equation}

\subsection{The $\mathbb{P}^2(\mathcal{T})$ Lagrange interpolator}

The Lagrange interpolator, denoted by 
$\Pi_L : C^\infty_c(\Omega)^d \to \bfX_{\!N}$, 
is defined by 
\begin{equation}\label{eq:defpun}
\forall \bfv =( v_k)_{k=1,\ldots,d} \in  C^\infty_c(\Omega)^d,\ (\Pi_L \bfv)_k = \sum_{i \in \mathcal{N}\cup\mathcal{E} }  v_k(\bfy_i)\phi_i,\ k = 1,\ldots,d.
\end{equation}

\begin{lemma}\label{lem:estP1inf}
There exists  $\ctel{cste:estP1inf}$ such that for any $\bfv \in C^\infty_c(\Omega)^d$, we have
\begin{equation}\label{eq:estP1inf}
\| \Pi_L \bfv \|_{L^\infty(\Omega)^d} \le \cter{cste:estP1inf} \| \gradi \bfv \|_{L^\infty(\Omega)^{d \times d}}.
\end{equation}
\end{lemma}
\begin{proof}
 Let $\underline{x}_1,\overline{x}_1\in\mathbb{R}$ such that, for any $\bfx =(x_1,\ldots,x_d)\in\Omega$, $\underline{x}_1\le x_1\le\overline{x}_1$. Writing, for $k=1,\ldots,d$,
 \[
   v_k(\bfy_i) = \int_{\underline{x}_1}^{(y_i)_1} \partial_1  v_k(s,((y_i)_\ell)_{\ell=2,\ldots,d}){\rm d}s,
 \]
 we get 
 \[
  | v_k(\bfy_i)|\le (\overline{x}_1-\underline{x}_1) \| \gradi \bfv \|_{L^\infty(\Omega)^{d \times d}}.
 \]
 Since $|\phi_i(\bfx)|\le 1$ for all  $\bfx\in\Omega$, and using that the maximum number of $i\in\mathcal{N}\cup\mathcal{E}$ such that  $\phi_i(\bfx)\neq 0$ is  $(d+1)(d+2)/2$, we get from \eqref{eq:defpun} that
 \[
  | v_k(\bfx)|\le \frac {(d+1)(d+2)} 2 (\overline{x}_1-\underline{x}_1) \| \gradi \bfv \|_{L^\infty(\Omega)^{d \times d}},
 \]
 hence concluding the proof.
\end{proof}

We have the following results.

\begin{lemma}\label{lem:estP2conv}
There exists 
$\ctel{cste:estP2conv}$ such that for any  $\bfv = (v_k)_{k=1,\ldots,d}\in C^\infty_c(\Omega)^d$, it holds, for any $K\in\mathcal{T}$,

\begin{equation}\label{eq:estP2inf1}
\| \bfv - \Pi_L \bfv \|_{L^\infty(K)^{d}} \le \cter{cste:estP2conv} h_K \| \gradi \bfv \|_{L^\infty(\Omega)^{d \times d}},
\end{equation}
\begin{equation}\label{eq:estP1inf2}
\| \gradi \Pi_L \bfv \|_{L^\infty(K)^{d \times d}} \le \cter{cste:estP2conv} \| \gradi \bfv \|_{L^\infty(\Omega)^{d \times d}},
\end{equation}
\begin{equation}\label{eq:estP2inf2}
\| \bfv - \Pi_L \bfv \|_{L^\infty(K)^{d}} \le \cter{cste:estP2conv} h_K^2 \| \bfv \|_{W^{2,\infty}(\Omega)},
\end{equation}
and
\begin{equation}\label{eq:estP2gradinf2}
\| \gradi \bfv - \gradi \Pi_L \bfv \|_{L^\infty(K)^{d\times d}} \le \cter{cste:estP2conv} h_K \| \bfv \|_{W^{2,\infty}(\Omega)}.
\end{equation}
\end{lemma}
\begin{proof}
Let $\bfv = (v_k)_{k=1,\ldots,d}\in C^\infty_c(\Omega)^d$.
For $k=1,\ldots,d$, the mean value theorem implies that there exists $\bfx_{i}$ belonging to the segment $[\bfy_i,\bfx]$ such that
\[
 v_k(\bfy_i) - v_k(\bfx) = (\bfy_i - \bfx)\cdot\nabla v_k(\bfx_{i}),
\]
which implies
\[
  |v_k(\bfy_i) - v_k(\bfx)|\le h_K  \| \gradi \bfv \|_{L^\infty(\Omega)^{d \times d}}.
\]

Noticing that, for any $K\in \mathcal{T}$ and for all $\bfx\in K$, we have $\sum_{i\in \mathcal{N}_K\cup\mathcal{E}_K}\phi_i(\bfx) = 1$, we get
\[
 |\Pi_L v_k(\bfx) - v_k(\bfx)|= |\sum_{i\in \mathcal{N}_K\cup\mathcal{E}_K}(v_k(\bfy_i) - v_k(\bfx)) \phi_i(\bfx)|\le \frac {(d+1)(d+1)} 2  h_K  \| \gradi \bfv \|_{L^\infty(\Omega)^{d \times d}}.
\]
This yields \eqref{eq:estP2inf1} as well as

\[
 \nabla v_k(\bfx) =  \sum_{i\in \mathcal{N}_K\cup\mathcal{E}_K}(v_k(\bfy_i) - v_k(\bfx))  \nabla \phi_i(\bfx) =  \sum_{i\in \mathcal{N}_K\cup\mathcal{E}_K}(\bfy_i - \bfx)\cdot\nabla v_k(\bfx_{i})  \nabla \phi_i(\bfx).
\]

Applying \eqref{eq:estgradphiP2}, we conclude that \eqref{eq:estP1inf2} holds. Writing, for $k=1,\ldots,d$ and any $j\in\mathcal{N}_K\cup\mathcal{E}_K$, the Taylor expansion
 \[
   v_k(\bfy_j) =  v_k(\bfx) + (\bfy_j - \bfx)\cdot\nabla  v_k(\bfx) + \frac 1 2 (\bfy_j - \bfx)^t D_2 v_k(\bfx_j) (\bfy_j - \bfx),
 \]
 for some point $\bfx_j$ in the segment $[\bfy_j,\bfx]$, und using \eqref{eq:defpun} in addition to 
 \[
 \sum_{j\in\mathcal{N}_K\cup\mathcal{E}_K} \phi_j(\bfx) = 1\hbox{ and }\sum_{j\in\mathcal{N}_K\cup\mathcal{E}_K} \phi_j(\bfx)(\bfy_j - \bfx) = 0,
 \]
we obtain \eqref{eq:estP2inf2}.
 We deduce, from the preceding Taylor expansion, that
 \begin{multline*}
  \sum_{j\in\mathcal{N}_K\cup\mathcal{E}_K} v_k(\bfy_j)\nabla \phi_i(\bfx) =  \sum_{j\in\mathcal{N}_K\cup\mathcal{E}_K} v_k(\bfx)\nabla \phi_i(\bfx)
  +  \sum_{j\in\mathcal{N}_K\cup\mathcal{E}_K} (\bfy_j - \bfx)\cdot\nabla  v_k(\bfx)\nabla \phi_i(\bfx) \\ + \sum_{j\in\mathcal{N}_K\cup\mathcal{E}_K}\frac 1 2 (\bfy_j - \bfx)^t D_2 v_k(\bfx_j) (\bfy_j - \bfx)\nabla \phi_i(\bfx).
 \end{multline*}
 We notice that, since the interpolation of first order polynomials is exact, 
 \[
  \sum_{j\in\mathcal{N}_K\cup\mathcal{E}_K} v_k(\bfx)\nabla \phi_i(\bfx) = 0\hbox{ and }
   \sum_{j\in\mathcal{N}_K\cup\mathcal{E}_K} (\bfy_j - \bfx)\cdot\nabla  v_k(\bfx)\nabla \phi_i(\bfx) = \nabla  v_k(\bfx),
 \]
 and
 \[
  |\sum_{j\in\mathcal{N}_K\cup\mathcal{E}_K}\frac 1 2 (\bfy_j - \bfx)^t D_2 v_k(\bfx_j) (\bfy_j - \bfx)\nabla \phi_i(\bfx)| \le \ctel{cst:secordappgrad} h_K\| \bfv \|_{W^{2,\infty}(\Omega)},
 \]
which allows to conclude \eqref{eq:estP2gradinf2}.
 \end{proof}

\subsection{Divergence correcting operator}

As stated in the introduction of Section \ref{sec:taylorhood}, we adapt and simplify the edge-bubble operator defined in \cite{Diening2021FortinOF}, without requesting that in the general case the divergence correcting operator vanishes at the boundary of the domain.

\subsubsection{Tangential edge bubble functions}

We define the family  of normalized tangential edge bubble function
$$
\forall \bfx\in\Omega, \ \mathbf{b}_{i,j}(\bfx) = \frac{ (d+2)(d+1)}{  | w_{i,j}|} \varphi_i(\bfx) \varphi_j(\bfx) (\bfy_j - \bfy_i)~\text{for any}~\{i,j\}\in \mathcal{E}.
$$
Note that, using \eqref{eq:supportvarphi} we have
\begin{equation}\label{eq:supportvarphiprod}
\overline{\{ \bfx \in \Omega~\text{such that}~\varphi_i(\bfx) \varphi_j(\bfx) \ne 0 \} }=  \omega_{ij}~\text{for any}~\{i,j\} \in \edges.
\end{equation}

We observe that, for any $\{i,j\} \in \edges$,  we have $\mathbf{b}_{i,j}\in \mathbb{P}^2(\mathcal{T})$, and, in the case where the segment $]\bfy_i,\bfy_j[\subset\Omega$,  we additionally have $\mathbf{b}_{i,j}\in \bfX_{\!N}$ (this property holds even if $\{i,j\} \in \edgesext$).

\begin{lemma}\label{lem:bij}
For any $\{i,j\}\in \mathcal{E}$ and for any $k \in \mathcal{N}$ we have the following relation
$$
( \dive \mathbf{b}_{i,j}, \varphi_k) = \delta_{i,k} - \delta_{j,k}.
$$
\end{lemma}
\begin{proof}
Let $\{i,j\}\in \mathcal{E}$ and let $k \in \mathcal{N}$. Using \eqref{eq:supportvarphiprod} We have 
$$
( \dive \mathbf{b}_{i,j}, \varphi_k) = \int_{\omega_{ij}}  \dive \mathbf{b}_{i,j} \varphi_k \dx.
$$
\begin{itemize}
\item The element $k$ is not an element of $\cup_{K \in \mathcal{T}_{ij}} \mathcal{N}_K$. 
Using \eqref{eq:supportvarphi} we have $\varphi_k(\bfx) = 0$ for any $\bfx \in \omega_{ij}$
and hence the expression vanishes.
\item The element $k$ is an element of $\cup_{K \in \mathcal{T}_{ij}} \mathcal{N}_K$.
We have
$$
( \dive \mathbf{b}_{i,j}, \varphi_k) 
= - \frac{ (d+2)(d+1)}{  | w_{i,j}|} \sum_{ K \in \mathcal{T}_{ij}} \int_K \varphi_i \varphi_j (\bfy_j - \bfy_i) \cdot \nabla\varphi_k \dx $$
$$
+ \frac{ (d+2)(d+1)}{  | w_{i,j}|} \sum_{  K \in \mathcal{T}_{ij}} \sum_{F \in \mathcal{F}_K} \int_F \varphi_k \varphi_i \varphi_j (\bfy_j - \bfy_i)  \cdot \bfn_F \dedge
$$ 
For any $K \in  \mathcal{T}_{ij}$ and $F \in \mathcal{F}_K$ we remark that $ (\bfy_j - \bfy_i) \cdot \bfn_F = 0$ in the case $\{i,j\}\subset \mathcal{N}_F$ and $\varphi_i(\bfx)\varphi_j(\bfx) = 0$ for any $\bfx\in F$ in the case $\{i,j\}\not\subset \mathcal{N}_F$.

We then obtain for any $K \in \mathcal{T}_{ij}$ and for any $F \in \mathcal{F}_K$
$$
 \int_{F} \varphi_k \varphi_i \varphi_j (\bfy_j - \bfy_i)  \cdot \bfn_F \dedge=0
$$
which gives the following identity
$$
( \dive \mathbf{b}_{i,j}, \varphi_k) 
= - \frac{ (d+2)(d+1)}{  | w_{i,j}|} \sum_{ K \in \mathcal{T}_{ij}} \int_K \varphi_i \varphi_j (\bfy_j - \bfy_i) \cdot \nabla\varphi_k \dx.
$$
For $k \notin  \{i, j\}$ using  $\varphi_k (\bfy_i)=0$ and $\varphi_k(\bfy_j)=0$ we obtain that the piecewise constant
function $\nabla\varphi_k$  is orthogonal to the edge vector $(\bfy_j - \bfy_i)$ on each $K \in \mathcal{T}_{ij}$.
Thus, 
the integrals vanishes and the claim follows.
For $k \in \{i,j\}$ using $\varphi_k(\bfy_k)=1$ we obtain for any $K \in \mathcal{T}_{ij}$ the following identity
\begin{equation*} 
\nabla(\varphi_k)_{|K} \cdot (\bfy_j-\bfy_i) = 
\left|\begin{array}{l} \displaystyle
-1, \quad \mbox{for }\ k=i,
\\[2.5ex] \displaystyle
1, \quad \mbox{ for }  \ k=j.
\end{array}\right.
\end{equation*}
For $k=i$ we obtain 
$$
( \dive \mathbf{b}_{i,j}, \varphi_k) =\frac{ (d+2)(d+1)}{  | w_{i,j}|} \sum_{K \in \mathcal{T}_{ij}} \int_K \varphi_i \varphi_j \dx=1,
$$
and for $k=j$ we obtain 
$$
( \dive \mathbf{b}_{i,j}, \varphi_k) = - \frac{ (d+2)(d+1)}{  | w_{i,j}|} \sum_{K \in \mathcal{T}_{ij}} \int_K \varphi_i \varphi_j \dx=-1,
$$
which gives the expected result.
\end{itemize}
\end{proof}

\begin{lemma}\label{lem:boundbij}
 There exists $\ctel{cste:estbijinf}$ such that 
\begin{equation}\label{eq:estbijinf}
\forall K\in\mathcal{T},\ \forall \{i,j\} \in \edges_K,\ \| \mathbf{b}_{ij} \|_{L^\infty(K)^d} \le \cter{cste:estbijinf} h_K^{1-d},
\end{equation}
and
\begin{equation}\label{eq:estgradbijinf}
\forall K\in\mathcal{T},\ \forall \{i,j\} \in \edges_K,\ \| \gradi\mathbf{b}_{ij} \|_{L^\infty(K)^d} \le \cter{cste:estbijinf} h_K^{-d}.
\end{equation}
\end{lemma}
\begin{proof}
We have, for $\{i,j\}\subset \mathcal{N}_K$,
\[
 |\varphi_i(\bfx) \varphi_j(\bfx)| \le 1
\]
and
\[
 | w_{i,j}| \ge \ctel{cte:minmesureomegaij} h_K^d,
\]
which leads to \eqref{eq:estbijinf} since $|\bfy_j - \bfy_i| \le h_K$. From \eqref{eq:estgradivarphi}, we get
\[
 |\nabla(\varphi_i\varphi_j)(\bfx) |\le 2\frac{\cter{cste:estgradvarphi} }{h_K}.
\]
Combining with the preceding inequalities, we deduce \eqref{eq:estgradbijinf}. 
\end{proof}

\medskip

\subsubsection{Divergence correcting operator}

We can now define the divergence correcting operator
 $\Pi_{D} : W_0^{1,\infty}(\Omega)^d \to  \mathbb{P}^2(\mathcal{T})^d $  for any $\bfv \in W_0^{1,\infty}(\Omega)^d$ by
\begin{equation}\label{eq:Piddef}
\forall \bfx\in\Omega,\ \Pi_D v(\bfx) = \sum_{i \in \mathcal{N}} \Pi_{D,i} \bfv (\bfx)\quad \text{with} \quad \Pi_{D,i} \bfv(\bfx) = \sum_{\substack{ j \in \mathcal{N} \\ \{i,j\} \in \edges}}  ( \dive(\varphi_i \bfv), \varphi_j) \mathbf{b}_{j,i}(\bfx).
\end{equation}
Using the fact that $\mathbf{b}_{i,j} = - \mathbf{b}_{j,i}$ for any $\{i,j\} \in \edges$ we remark that this operator satisfies
\begin{equation}\label{eq:Pi2identity}
\forall\bfv \in W_0^{1,\infty}(\Omega)^d,\ \forall \bfx\in\Omega,\ \Pi_D \bfv(\bfx) = \sum_{\{i,j\} \in \edges} (\bfv, - \varphi_i \nabla\varphi_j+ \varphi_j \nabla\varphi_i) \mathbf{b}_{j,i}(\bfx).
\end{equation}
\begin{lemma}\label{lem:pidsupport}
For any $\bfv \in W_0^{1,\infty}(\Omega)^d$ and for any $\bfx\in \Omega$, if $\Pi_D\bfv(\bfx)\ne 0$, then there exists $\bfy\in B(\bfx,\cter{cst:regumesh} h_{\mathcal{T}})$ such that $\bfv(\bfy)\ne 0$ (recall that $\cter{cst:regumesh}$ is introduced in Lemma \ref{lem:reguomegaij}).
\end{lemma}
\begin{proof}
 If $\Pi_D\bfv(\bfx)\ne 0$, there exists $\{i,j\} \in \edges$ such that $(\bfv, - \varphi_i \nabla\varphi_j+ \varphi_j \nabla\varphi_i) \mathbf{b}_{j,i}(\bfx)\neq 0$, which implies $\bfx \in \omega_{ij}$, which yields that there exists $K\in\mathcal{T}_{ij}$ such that $\bfx\in \overline{K}$. This also implies that
 \[
  \int_{\omega_{ij}} |\bfv(\bfy)| {\rm d}\bfy >0.
 \]
 Therefore there exists $\bfy\in  \omega_{ij}$ such that $\bfv(\bfy)\neq 0$.  Remarking that \eqref{eq:diamomegaij} implies that 
   ${\rm diam}(\omega_{ij}) \le \cter{cst:regumesh} h_K$. Hence $|\bfx - \bfy| \le  \cter{cst:regumesh} h_{\mathcal{T}}$.
\end{proof}

We can now prove that this operator preserves the divergence in the following sense.
\begin{lemma}\label{lem:piddiv}
For any $\bfv \in W_0^{1,\infty}(\Omega)^d$ and any $\varphi \in \mathbb{P}^1(\mathcal{T})$ we have
$$
( \dive \Pi_{D} \bfv, \varphi) =( \dive  \bfv,\varphi).
$$
\end{lemma}
\begin{proof}
Using Lemma \ref{lem:bij} we have, for any $k\in\mathcal{N}$,
\begin{equation*}
\forall i\in\mathcal{N},\ ( \dive \Pi_{D,i} \bfv, \varphi_k) 
=\sum_{\substack{ j \in \mathcal{N} \\ \{i,j\} \in \edges}}  ( \dive(\varphi_i \bfv), \varphi_j) ( \delta_{j,k} - \delta_{i,k}).
\end{equation*}
Hence we get $( \dive \Pi_{D} \bfv, \varphi_k) = T_1 + T_2$,
with
\begin{equation*}
T_1 = \sum_{ i \in \mathcal{N}}\sum_{\substack{ j \in \mathcal{N} \\ \{i,j\} \in \edges}}  ( \dive(\varphi_i \bfv), \varphi_j)  \delta_{j,k}\hbox{ and }
T_2 = -\sum_{ i \in \mathcal{N}}\sum_{\substack{ j \in \mathcal{N} \\ \{i,j\} \in \edges}}  ( \dive(\varphi_i \bfv), \varphi_j)  \delta_{i,k}.
\end{equation*}
Remarking that, for any quantity $Q_{i,j}$,
\[
 \sum_{ i \in \mathcal{N}}\sum_{\substack{ j \in \mathcal{N} \\ \{i,j\} \in \edges}}Q_{i,j} = \sum_{ \{i,j\} \in \edges} (Q_{i,j} + Q_{j,i}) = \sum_{ j \in \mathcal{N}}\sum_{\substack{ i \in \mathcal{N} \\ \{i,j\} \in \edges}}Q_{i,j},
\]
we obtain
\[
 T_1 = \sum_{ j \in \mathcal{N}}\sum_{\substack{ i \in \mathcal{N} \\ \{i,j\} \in \edges}}  ( \dive(\varphi_i \bfv), \varphi_j)  \delta_{j,k} =   ( \dive(\sum_{\substack{ i \in \mathcal{N} \\ \{i,k\} \in \edges}}\varphi_i \bfv), \varphi_k).
\]
For any $\bfx\in \omega_k$, we have
\[
 \sum_{\substack{ i \in \mathcal{N} \\ \{i,k\} \in \edges}} \varphi_i(\bfx) = 1 - \varphi_k(\bfx),
\]
which leads to
\[
 T_1 =  ( \dive((1-\varphi_k) \bfv), \varphi_k).
\]

Next we have
\begin{equation*}
T_2 = -\sum_{\substack{ j \in \mathcal{N} \\ \{k,j\} \in \edges}}  ( \dive(\varphi_k \bfv), \varphi_j) =  -( \dive(\varphi_k \bfv), \sum_{\substack{ j \in \mathcal{N} \\ \{k,j\} \in \edges}} \varphi_j).
\end{equation*}
For any $\bfx\in \omega_k$, we have
\[
 \sum_{\substack{ j \in \mathcal{N} \\ \{k,j\} \in \edges}} \varphi_j(\bfx) = 1 - \varphi_k(\bfx),
\]
which implies, since $(\dive(\varphi_k \bfv),1)=0$ (recall that $\bfv$ vanishes on the boundary of $\Omega$),
\begin{equation*}
T_2 =  -( \dive(\varphi_k \bfv),  - \varphi_k) = ( \dive(\varphi_k \bfv), \varphi_k).
\end{equation*}
This yields
\[
 T_1+T_2  = ( \dive(\bfv), \varphi_k),
\]
which concludes the proof.
\end{proof}

\begin{lemma}\label{lem:estP2}
There exists $\ctel{cste:estP2}$ such that, for any $\bfv \in W_0^{1,\infty}(\Omega)^d$, we have
\begin{equation*}
\| \Pi_D \bfv \|_{L^\infty(\Omega)^d} \le \cter{cste:estP2} \| \bfv \|_{L^\infty(\Omega)^d}.
\end{equation*}
\end{lemma}
\begin{proof}
Using \eqref{eq:Pi2identity}  and the definition of the tangential bubble functions we have, for $\bfx\in K$ with $K\in \mathcal{T}$,
\[
| \Pi_D \bfv(\bfx) | \le  \sum_{\{i,j\} \in \edges} | (\bfv, - \varphi_i \nabla\varphi_j+ \varphi_j \nabla\varphi_i)| | \mathbf{b}_{i,j}(\bfx) |.
\]
If $K\subset\omega_{ij}$, then there exists $\ctel{cst:diamomij}$ such that $\omega_{ij}\subset B(\bfx,\cter{cst:diamomij}h_K)$. Since ${\rm m}(\omega_{ij}) \ge\ctel{cst:diamomijb} h_K^d$, we get the existence of $\ctel{cst:diamomijc}$ such that the number of $\{i,j\} \in \edges$ with $\mathbf{b}_{i,j}(\bfx)\neq 0$ is bounded by $\cter{cst:diamomijc}$. Using \eqref{eq:estbijinf} and \eqref{eq:estgradivarphi}, we obtain
\[
 | \Pi_D \bfv(\bfx) | \le \| \bfv \|_{L^\infty(\Omega)^d}\cter{cst:diamomijc}\cter{cste:estbijinf} h_K\frac{\cter{cste:estgradvarphi} }{h_K},
\]
which concludes the proof.
\end{proof}

\begin{lemma}\label{lem:estgradP2}
There exists $\ctel{cste:estgradP2}$ such that
\begin{equation}\label{eq:majgradpidv}
\forall \bfv \in W_0^{1,\infty}(\Omega)^d,\ \| \gradi\Pi_D \bfv \|_{L^\infty(\Omega)^{d\times d}} \le \cter{cste:estgradP2} \max_{K\in\mathcal{T}}\frac {\| \bfv \|_{L^\infty(K)^{d}}} {h_K}.
\end{equation}
\end{lemma}
\begin{proof}
Let us denote, for a given $K\in \mathcal{T}$,
\begin{equation}\label{eq:hypvv}
 C_\bfv = \max_{K\in\mathcal{T}}\frac {\| \bfv \|_{L^\infty(K)^{d}}} {h_K}.
\end{equation}
Using \eqref{eq:Pi2identity}, we have, for $\bfx\in K$,
\begin{equation*}
\gradi \Pi_D v(\bfx) =  \sum_{\{i,j\} \in \edges}  (\bfv, - \varphi_i \nabla\varphi_j+ \varphi_j \nabla\varphi_i) \gradi \mathbf{b}_{i,j}(\bfx).
\end{equation*}
We remark that, using \eqref{eq:hypvv} and \eqref{eq:estgradivarphi} for $K\subset\omega_{ij}$ and $\bfx\in K$,
\[
 |\bfv(\bfx)(-\varphi_i(\bfx) \nabla\varphi_j(\bfx)+ \varphi_j(\bfx) \nabla\varphi_i(\bfx))| \le  \| \bfv \|_{L^\infty(K)^{d}} 2\frac{\cter{cste:estgradvarphi} }{h_K}.
\]
This yields
\[
 |(\bfv, - \varphi_i \nabla\varphi_j+ \varphi_j \nabla\varphi_i)|\le 2|\omega_{ij}| C_\bfv  \cter{cste:estgradvarphi},
\]
and therefore, using \eqref{eq:estgradbijinf}, we conclude \eqref{eq:majgradpidv}.
\end{proof}

\subsection{The operator $\Pi_N $}\label{sec:defPi}

We let  $\bfW = C^\infty_c(\Omega)^d\cap \bfV(\Omega)$. We notice that, for any $\bfv\in\bfW$, then $\Pi_L\bfv\in W^{1,\infty}_0(\Omega)^d$. One can then consider the function $\Pi_D ( \bfv - \Pi_L \bfv)\in \mathbb{P}^2(\mathcal{T})^d$. We notice that this function is not necessarily null on the boundary of the domain, and in this case, it is not an element of $\bfX_{\!N}$. 

We then define $\Pi_N  : \bfW \to \bfX_{\!N}  $, for any $\bfv \in \bfW$, by:
\begin{equation}\label{eq:defpiNTH}
 \hbox{if }\Pi_D ( \bfv - \Pi_L \bfv)\in H^1_0(\Omega)^d,\hbox{ then }\Pi_N  \bfv = \Pi_L \bfv + \Pi_D ( \bfv - \Pi_L \bfv)\hbox{ else }\Pi_N  \bfv = 0.
\end{equation}
Notice that the operator $\Pi_N$ defined by \eqref{eq:defpiNTH} is not linear.
\begin{lemma}\label{lem:divpinzero}
For any $\bfv \in C^\infty_c(\Omega)^d\cap \bfV(\Omega) $ we have $\Pi_N \bfv \in \bfE_N$ that is
\begin{equation}\label{eq:divpinzero}
 \forall \varphi \in \mathbb{P}^1(\mathcal{T}),\ ( \dive \Pi_N  \bfv,\varphi)  =0.
\end{equation}
\end{lemma}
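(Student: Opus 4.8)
The plan is to derive \eqref{eq:divpinzero} directly from the divergence-preservation identity of Lemma~\ref{lem:piddiv}, after disposing of the two branches in the definition~\eqref{eq:defpiNTH}. Both the membership $\Pi_N\bfv\in\bfX_{\!N}$ and $\Pi_N\bfv\in L^\infty(\Omega)^d$ will come essentially for free, so the substance is the divergence condition, which then also yields $\Pi_N\bfv\in\bfVN$ and hence $\Pi_N\bfv\in\bfE_N$.

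First I would observe that if $\Pi_D(\bfv-\Pi_L\bfv)\notin H^1_0(\Omega)^d$ then $\Pi_N\bfv=0$ and there is nothing to prove. So assume from now on that $\Pi_N\bfv=\Pi_L\bfv+\Pi_D(\bfv-\Pi_L\bfv)$. Since $\bfv\in C^\infty_c(\Omega)^d$ has support compactly contained in $\Omega$, all nodal values of $\bfv$ at vertices and edge midpoints lying on $\partial\Omega$ vanish, so $\Pi_L\bfv\in\mathbb{P}^2(\mathcal{T})^d\cap H^1_0(\Omega)^d=\bfX_{\!N}$; as $\Pi_D(\bfv-\Pi_L\bfv)\in\mathbb{P}^2(\mathcal{T})^d$ always and lies in $H^1_0(\Omega)^d$ in the present branch, we get $\Pi_N\bfv\in\bfX_{\!N}$, and being a continuous piecewise polynomial on $\overline\Omega$ it also belongs to $L^\infty(\Omega)^d$.

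For the divergence condition, the one point needing attention is that $\bfw:=\bfv-\Pi_L\bfv$ is an admissible argument for $\Pi_D$, i.e.\ $\bfw\in W^{1,\infty}_0(\Omega)^d$: this holds because $\bfv\in C^\infty_c(\Omega)^d$ and $\Pi_L\bfv$ is a continuous piecewise quadratic function vanishing on $\partial\Omega$ (again by the compact support of $\bfv$). Then Lemma~\ref{lem:piddiv} gives, for every $\varphi\in\mathbb{P}^1(\mathcal{T})$,
\[
(\dive\Pi_D(\bfv-\Pi_L\bfv),\varphi)=(\dive(\bfv-\Pi_L\bfv),\varphi),
\]
and adding $(\dive\Pi_L\bfv,\varphi)$ to both sides yields $(\dive\Pi_N\bfv,\varphi)=(\dive\bfv,\varphi)$. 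Since $\bfv$ is smooth and lies in $\bfV(\Omega)$, $\dive\bfv=0$ a.e.\ in $\Omega$, so the right-hand side is $0$; this is exactly \eqref{eq:divpinzero}.

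Finally I would conclude $\Pi_N\bfv\in\bfVN$: for any $q\in M_N\subset\mathbb{P}^1(\mathcal{T})$, Green's formula together with $\Pi_N\bfv\in H^1_0(\Omega)^d$ and \eqref{eq:divpinzero} gives $(\Pi_N\bfv,\nabla q)=-(\dive\Pi_N\bfv,q)=0$, which is the defining property~\eqref{eq:defVN} of $\bfVN$. Combined with the previous paragraphs this shows $\Pi_N\bfv\in\bfE_N$. I do not anticipate any real obstacle; the only bookkeeping items are the passage from $M_N$-test functions to $\mathbb{P}^1(\mathcal{T})$-test functions (the two spaces differ only by constants, which have zero gradient) and the verification that $\bfv-\Pi_L\bfv$ vanishes on $\partial\Omega$ so that Lemma~\ref{lem:piddiv} applies.
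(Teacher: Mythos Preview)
Your proof is correct and follows essentially the same approach as the paper: both split on the two branches of \eqref{eq:defpiNTH} and, in the nontrivial branch, apply Lemma~\ref{lem:piddiv} to $\bfv-\Pi_L\bfv$ to reduce $(\dive\Pi_N\bfv,\varphi)$ to $(\dive\bfv,\varphi)=0$. You additionally spell out the verifications that $\Pi_N\bfv\in\bfX_{\!N}\cap L^\infty(\Omega)^d$ and that \eqref{eq:divpinzero} implies $\Pi_N\bfv\in\bfVN$, which the paper leaves implicit.
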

\begin{proof}
 If $\Pi_D ( \bfv - \Pi_L \bfv)\in H^1_0(\Omega)^d$, we get from Lemma \ref{lem:piddiv}
 \[
  ( \dive  \Pi_D ( \bfv - \Pi_L \bfv),\varphi)  =  ( \dive ( \bfv - \Pi_L \bfv),\varphi) =  -( \dive   \Pi_L \bfv,\varphi),
 \]
 which provides \eqref{eq:divpinzero}. Otherwise, we get  \eqref{eq:divpinzero} since $\Pi_N  \bfv = 0$.
\end{proof}

\begin{lemma}\label{lem:estPinf}
There exists $\ctel{cste:estPinf}$ such that for any $\bfv \in \bfW$, we have
\begin{equation}\label{eq:estlinfpin}
\| \Pi_N  \bfv \|_{L^\infty(\Omega)^d} \le \cter{cste:estPinf} \| \gradi \bfv \|_{L^\infty(\Omega)^{d \times d}}.
\end{equation}
and
\begin{equation}\label{eq:estlinfgradpin}
\| \gradi\Pi_N  \bfv \|_{L^\infty(\Omega)^d} \le \cter{cste:estPinf} \| \gradi \bfv \|_{L^\infty(\Omega)^{d \times d}}.
\end{equation}
\end{lemma}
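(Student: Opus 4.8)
The plan is to split according to the two cases in the definition \eqref{eq:defpiNTH} of $\Pi_N$. If $\Pi_N\bfv = 0$, both inequalities hold trivially for any positive constant. So from now on I assume $\Pi_D(\bfv-\Pi_L\bfv)\in H^1_0(\Omega)^d$, hence $\Pi_N\bfv = \Pi_L\bfv + \Pi_D(\bfv-\Pi_L\bfv)$. Recall that $\bfv\in C^\infty_c(\Omega)^d$ and $\Pi_L\bfv\in W_0^{1,\infty}(\Omega)^d$, so that $\bfv - \Pi_L\bfv\in W_0^{1,\infty}(\Omega)^d$ and Lemmas \ref{lem:estP2} and \ref{lem:estgradP2} may legitimately be applied to it.

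The key preliminary observation I would record is the local Lagrange interpolation estimate \eqref{eq:estP2inf1}: for every $K\in\mathcal{T}$,
\[
\|\bfv-\Pi_L\bfv\|_{L^\infty(K)^d}\le \cter{cste:estP2conv}\,h_K\,\|\gradi\bfv\|_{L^\infty(\Omega)^{d\times d}}.
\]
Dividing by $h_K$ and taking the maximum over $K$ gives $\max_{K\in\mathcal{T}}\|\bfv-\Pi_L\bfv\|_{L^\infty(K)^d}/h_K\le \cter{cste:estP2conv}\|\gradi\bfv\|_{L^\infty(\Omega)^{d\times d}}$, while using $h_K\le h_{\mathcal{T}}\le{\rm diam}(\Omega)$ gives $\|\bfv-\Pi_L\bfv\|_{L^\infty(\Omega)^d}\le \cter{cste:estP2conv}\,{\rm diam}(\Omega)\,\|\gradi\bfv\|_{L^\infty(\Omega)^{d\times d}}$.

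For \eqref{eq:estlinfpin} I would use the triangle inequality $\|\Pi_N\bfv\|_{L^\infty(\Omega)^d}\le\|\Pi_L\bfv\|_{L^\infty(\Omega)^d}+\|\Pi_D(\bfv-\Pi_L\bfv)\|_{L^\infty(\Omega)^d}$, bound the first term by $\cter{cste:estP1inf}\|\gradi\bfv\|_{L^\infty(\Omega)^{d\times d}}$ via Lemma \ref{lem:estP1inf}, and bound the second by $\cter{cste:estP2}\|\bfv-\Pi_L\bfv\|_{L^\infty(\Omega)^d}$ via Lemma \ref{lem:estP2}, then invoke the preliminary estimate. For \eqref{eq:estlinfgradpin}, similarly $\|\gradi\Pi_N\bfv\|_{L^\infty(\Omega)^{d\times d}}\le\|\gradi\Pi_L\bfv\|_{L^\infty(\Omega)^{d\times d}}+\|\gradi\Pi_D(\bfv-\Pi_L\bfv)\|_{L^\infty(\Omega)^{d\times d}}$; the first term is handled by \eqref{eq:estP1inf2}, and for the second, Lemma \ref{lem:estgradP2} gives $\|\gradi\Pi_D(\bfv-\Pi_L\bfv)\|_{L^\infty(\Omega)^{d\times d}}\le\cter{cste:estgradP2}\max_{K\in\mathcal{T}}\|\bfv-\Pi_L\bfv\|_{L^\infty(K)^d}/h_K$, which the preliminary estimate controls by $\cter{cste:estgradP2}\cter{cste:estP2conv}\|\gradi\bfv\|_{L^\infty(\Omega)^{d\times d}}$. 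Adding the two contributions yields both inequalities with a constant $\cter{cste:estPinf}$ depending only on $d$, $\Omega$ and increasingly on $\theta_{\mathcal{T}}$.

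The only point requiring care — and it is really the mechanism that makes the lemma true rather than a genuine obstacle — is this last cancellation: the factor $h_K$ gained in the local Lagrange interpolation error \eqref{eq:estP2inf1} is exactly what absorbs the factor $h_K^{-1}$ produced by the gradient of the edge-bubble correction in Lemma \ref{lem:estgradP2}, so that no negative power of the mesh size survives. Everything else is a direct application of the interpolation estimates already established in Lemmas \ref{lem:estP1inf}, \ref{lem:estP2conv}, \ref{lem:estP2} and \ref{lem:estgradP2}.
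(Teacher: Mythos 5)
Your proposal is correct and follows essentially the same route as the paper: split on the two cases of \eqref{eq:defpiNTH}, bound $\Pi_L\bfv$ via Lemma \ref{lem:estP1inf} and \eqref{eq:estP1inf2}, bound the correction $\Pi_D(\bfv-\Pi_L\bfv)$ via Lemmas \ref{lem:estP2} and \ref{lem:estgradP2}, and use the local estimate \eqref{eq:estP2inf1} so that the factor $h_K$ in the Lagrange error cancels the $h_K^{-1}$ from the bubble gradient. You simply spell out the intermediate bound on $\|\bfv-\Pi_L\bfv\|_{L^\infty}$ (and the $\mathrm{diam}(\Omega)$ factor) that the paper leaves implicit.
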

\begin{proof}
It suffices to consider the case  $\Pi_D ( \bfv - \Pi_L \bfv)\in H^1_0(\Omega)^d$ (otherwise the conclusion is straightforward). 
We apply Lemmas \ref{lem:estP1inf} and \ref{lem:estP2}.  We then get \eqref{eq:estlinfpin}.

Now applying Lemmas \eqref{eq:estP2inf1} in Lemma \ref{lem:estP2conv} and Lemma \ref{lem:estgradP2}, we obtain \eqref{eq:estlinfgradpin}.
\end{proof}

\begin{lemma}\label{lem:convpin} There exists $\ctel{cst:convpin}$ such that, for any $\bfv \in \bfW$ such that $\Pi_D ( \bfv - \Pi_L \bfv)\in H^1_0(\Omega)$, then
\begin{equation}\label{eq:estlinfpinpin}
\| \Pi_N  \bfv - \bfv \|_{W^{1,\infty}(\Omega)^d} \le \cter{cst:convpin} h_{\mathcal{T}} \| \bfv \|_{W^{2,\infty}(\Omega)}.
\end{equation}
\end{lemma}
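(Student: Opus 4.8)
The plan is to use the explicit formula for $\Pi_N$ in the relevant case and reduce everything to the interpolation estimates already established for $\Pi_L$ and $\Pi_D$. Since by hypothesis $\Pi_D(\bfv - \Pi_L\bfv)\in H^1_0(\Omega)^d$, definition \eqref{eq:defpiNTH} gives $\Pi_N\bfv = \Pi_L\bfv + \Pi_D(\bfv - \Pi_L\bfv)$, hence
\[
 \Pi_N\bfv - \bfv = (\Pi_L\bfv - \bfv) + \Pi_D(\bfv - \Pi_L\bfv),
\]
and I will bound the two summands separately in $W^{1,\infty}(\Omega)^d$.

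For the first summand, I would simply invoke \eqref{eq:estP2inf2} and \eqref{eq:estP2gradinf2}: on each $K\in\mathcal{T}$ one has $\|\bfv - \Pi_L\bfv\|_{L^\infty(K)^d}\le \cter{cste:estP2conv} h_K^2 \|\bfv\|_{W^{2,\infty}(\Omega)}$ and $\|\gradi\bfv - \gradi\Pi_L\bfv\|_{L^\infty(K)^{d\times d}}\le \cter{cste:estP2conv} h_K \|\bfv\|_{W^{2,\infty}(\Omega)}$; taking the maximum over $K$ and using $h_K\le h_{\mathcal{T}}$ together with $h_K^2\le \mathrm{diam}(\Omega)\, h_{\mathcal{T}}$ yields $\|\Pi_L\bfv - \bfv\|_{W^{1,\infty}(\Omega)^d}\le C h_{\mathcal{T}}\|\bfv\|_{W^{2,\infty}(\Omega)}$.

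For the second summand, the first thing to note is that $\bfw := \bfv - \Pi_L\bfv$ belongs to $W^{1,\infty}_0(\Omega)^d$ — indeed $\bfv\in C^\infty_c(\Omega)^d$ and, as recalled in Section~\ref{sec:defPi}, $\Pi_L\bfv\in W^{1,\infty}_0(\Omega)^d$ — so Lemmas \ref{lem:estP2} and \ref{lem:estgradP2} apply to $\bfw$. Lemma \ref{lem:estP2} gives $\|\Pi_D\bfw\|_{L^\infty(\Omega)^d}\le \cter{cste:estP2}\|\bfw\|_{L^\infty(\Omega)^d}\le \cter{cste:estP2}\cter{cste:estP2conv} h_{\mathcal{T}}^2\|\bfv\|_{W^{2,\infty}(\Omega)}$, again via \eqref{eq:estP2inf2}. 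For the gradient, Lemma \ref{lem:estgradP2} gives $\|\gradi\Pi_D\bfw\|_{L^\infty(\Omega)^{d\times d}}\le \cter{cste:estgradP2}\max_{K\in\mathcal{T}} \|\bfw\|_{L^\infty(K)^d}/h_K$, and here the gain of an extra power of $h_K$ in \eqref{eq:estP2inf2} is exactly what is needed: $\|\bfw\|_{L^\infty(K)^d}/h_K\le \cter{cste:estP2conv} h_K\|\bfv\|_{W^{2,\infty}(\Omega)}\le \cter{cste:estP2conv} h_{\mathcal{T}}\|\bfv\|_{W^{2,\infty}(\Omega)}$.

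Adding the two bounds gives the claimed estimate with $\cter{cst:convpin}$ depending only on $d$, $\Omega$ and (increasingly) on $\theta_{\mathcal{T}}$, as in the convention of Section~\ref{sec:taylorhood}. There is really no serious obstacle here: the proof is a bookkeeping of the constants from the $\Pi_L$ and $\Pi_D$ estimates, and the only point requiring a moment's care is checking that $\bfv - \Pi_L\bfv$ vanishes on $\partial\Omega$ so that $\Pi_D$ may legitimately be applied to it, and noticing that it is precisely the second-order accuracy \eqref{eq:estP2inf2} of $\Pi_L$ (rather than the merely first-order \eqref{eq:estP2inf1}) that absorbs the $1/h_K$ loss incurred by $\gradi\Pi_D$.
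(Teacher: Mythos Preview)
Your proof is correct and follows essentially the same route as the paper: decompose $\Pi_N\bfv - \bfv$ into $(\Pi_L\bfv-\bfv)+\Pi_D(\bfv-\Pi_L\bfv)$, control the first piece by \eqref{eq:estP2inf2}--\eqref{eq:estP2gradinf2}, and control the second by combining the second-order bound \eqref{eq:estP2inf2} with the $\Pi_D$ estimates (Lemmas \ref{lem:estP2} and \ref{lem:estgradP2}). Your remark that the $h_K^2$ gain from \eqref{eq:estP2inf2} is precisely what offsets the $1/h_K$ loss in Lemma \ref{lem:estgradP2} is the key observation, and the paper uses it in the same way.
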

\begin{proof}
We first remark that, from \eqref{eq:estP2inf2} and \eqref{eq:estP2gradinf2} in Lemma \ref{lem:estP2conv}, we get 
 \begin{equation*}
 \| \Pi_L \bfv - \bfv \|_{W^{1,\infty}(\Omega)^d} \le \ctel{cst:convpind} h_{\mathcal{T}} \| \bfv \|_{W^{2,\infty}(\Omega)}.
\end{equation*}
Again applying \eqref{eq:estP2inf2} in the same Lemma, and using Lemma \ref{lem:estgradP2}, we get that
 \begin{equation*}
\| \Pi_D(\bfv - \Pi_L\bfv) \|_{W^{1,\infty}(\Omega)^d} \le \cter{cste:estgradP2}\cter{cste:estP2conv}h_{\mathcal{T}} \| \bfv \|_{W^{2,\infty}(\Omega)},
\end{equation*}
which concludes the proof of \eqref{eq:estlinfpinpin}.
\end{proof}

\begin{theorem}\label{thm:hyppinok}
 Let $\bfW = C^\infty_c(\Omega)\cap\bfV(\Omega)$.
Let $(\mathcal{T}_N)_{N\ge 1}$ be a sequence of simplicial meshes in the sense of this section, such that $(h_{\mathcal{T}_N})_{N\ge 1}$ converges to zero while  $(\theta_{\mathcal{T}_N})_{N\ge 1}$ remains bounded. Let $(\bfX_{\!N} ,M_N,\Pi_N)_{N\ge 1}$ be such that $\bfX_{\!N} $ and $M_N$ are the Taylor-Hood spaces defined by \eqref{eq:defxnmn}, $\Pi_N$ is defined by \eqref{eq:defpiNTH}, in the case where $\mathcal{T} = \mathcal{T}_N$. 
\\
Then the family $\Big(\bfW,(\bfX_{\!N} ,M_N,\Pi_N)_{N\ge 1}\Big)$ satisfies the convergence assumptions \eqref{hyp:PNexist}-\eqref{hyp:Mlim} in Section \ref{sec:cvhyp}.
\end{theorem}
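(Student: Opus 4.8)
The plan is to verify, one group at a time, the hypotheses \eqref{hyp:PNexist}, \eqref{hyp:X}--\eqref{hyp:M}, \eqref{hyp:PNconv} and \eqref{hyp:Mlim} of Section \ref{sec:cvhyp}, relying on the interpolation estimates of Lemmas \ref{lem:estP1inf}--\ref{lem:convpin}. Assumptions \eqref{hyp:X} and \eqref{hyp:M} hold trivially because $\bfX_{\!N} = \mathbb{P}^2(\mathcal{T}_N)^d\cap H^1_0(\Omega)^d$ and $M_N = \mathbb{P}^1(\mathcal{T}_N)\cap L^2_0(\Omega)$ are finite-dimensional, hence closed, subspaces. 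Assumption \eqref{hyp:PNexist} is exactly the density of $C^\infty_c(\Omega)^d\cap\bfV(\Omega)$ in $H^1_0(\Omega)^d\cap\bfV(\Omega)$ for the $H^1_0(\Omega)^d$ norm recalled in the introduction.

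For \eqref{hyp:PNconv}, fix $\bfvarphi\in\bfW$. I first check $\Pi_N\bfvarphi\in\bfE_N$. By construction \eqref{eq:defpiNTH}, $\Pi_N\bfvarphi$ is either $0$ or $\Pi_L\bfvarphi + \Pi_D(\bfvarphi-\Pi_L\bfvarphi)$; in both cases it is a piecewise polynomial on the bounded set $\Omega$, hence lies in $L^\infty(\Omega)^d$, and it lies in $H^1_0(\Omega)^d$ (in the non-trivial case because $\Pi_L\bfvarphi\in\bfX_{\!N}$ and $\Pi_D(\bfvarphi-\Pi_L\bfvarphi)\in\mathbb{P}^2(\mathcal{T}_N)^d$ is precisely assumed to belong to $H^1_0(\Omega)^d$). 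Thus $\Pi_N\bfvarphi\in\bfX_{\!N}$, and since $\Pi_N\bfvarphi\in H^1_0(\Omega)^d$, integration by parts together with Lemma \ref{lem:divpinzero} gives $(\Pi_N\bfvarphi,\nabla q) = -(\dive\Pi_N\bfvarphi,q) = 0$ for every $q\in M_N\subset\mathbb{P}^1(\mathcal{T}_N)$, i.e. $\Pi_N\bfvarphi\in\bfVN$. The uniform bound on $\|\Pi_N\bfvarphi\|_\bfE$ follows from Lemma \ref{lem:estPinf}: indeed $\|\Pi_N\bfvarphi\|_\bfE = \|\gradi\Pi_N\bfvarphi\|_{L^2(\Omega)^{d\times d}} + \|\Pi_N\bfvarphi\|_{L^\infty(\Omega)^d}\le (|\Omega|^{1/2}+1)\,\cter{cste:estPinf}\|\gradi\bfvarphi\|_{L^\infty(\Omega)^{d\times d}}$, and $\cter{cste:estPinf}$ depends only on $d$, $\Omega$ and $\theta_{\mathcal{T}_N}$, which stays bounded by hypothesis.

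The remaining point of \eqref{hyp:PNconv}, namely $\Pi_N\bfvarphi\to\bfvarphi$ in $H^1_0(\Omega)^d$, is the main (if mild) obstacle, since one must first rule out the degenerate branch $\Pi_N\bfvarphi = 0$ for $N$ large; this is a support-tracking argument. Let $\delta_0 = \mathrm{dist}(\mathrm{supp}\,\bfvarphi,\partial\Omega) > 0$. Each nodal basis function $\phi_i$ of $\mathbb{P}^2(\mathcal{T}_N)$ has support of diameter at most $C h_{\mathcal{T}_N}$ containing $\bfy_i$ (use \eqref{eq:supportvarphiprod} and \eqref{eq:diamomegaij}), and the relevant component of $\bfvarphi$ vanishes at $\bfy_i$ whenever $\bfy_i\notin\mathrm{supp}\,\bfvarphi$; hence $\Pi_L\bfvarphi$, and therefore $\bfvarphi-\Pi_L\bfvarphi$, is supported within distance $C h_{\mathcal{T}_N}$ of $\mathrm{supp}\,\bfvarphi$, and Lemma \ref{lem:pidsupport} then shows that $\Pi_D(\bfvarphi-\Pi_L\bfvarphi)$ is supported within distance $(C+\cter{cst:regumesh})h_{\mathcal{T}_N}$ of $\mathrm{supp}\,\bfvarphi$. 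As soon as $(C+\cter{cst:regumesh})h_{\mathcal{T}_N} < \delta_0$, this function vanishes near $\partial\Omega$, hence belongs to $H^1_0(\Omega)^d$, so $\Pi_N\bfvarphi = \Pi_L\bfvarphi + \Pi_D(\bfvarphi-\Pi_L\bfvarphi)$ and Lemma \ref{lem:convpin} yields $\|\Pi_N\bfvarphi - \bfvarphi\|_{W^{1,\infty}(\Omega)^d}\le\cter{cst:convpin}h_{\mathcal{T}_N}\|\bfvarphi\|_{W^{2,\infty}(\Omega)}\to 0$; since $\Omega$ is bounded this implies $\Pi_N\bfvarphi\to\bfvarphi$ in $H^1_0(\Omega)^d$.

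Finally, for \eqref{hyp:Mlim}, I use that $\mathcal{P}_{M_N}$ is the orthogonal projection for the inner product $(p,q)\mapsto(\nabla p,\nabla q)$, so that $\|\nabla(p-\mathcal{P}_{M_N}p)\|_{L^2(\Omega)^d}\le\|\nabla(p-q)\|_{L^2(\Omega)^d}$ for every $q\in M_N$. Given $p\in H^1(\Omega)\cap L^2_0(\Omega)$ and $\varepsilon>0$, by density of $C^\infty(\overline\Omega)$ in $H^1(\Omega)$ there is $\tilde p\in C^\infty(\overline\Omega)$ with $\|\nabla(p-\tilde p)\|_{L^2(\Omega)^d}\le\varepsilon$; taking $q$ to be the $\mathbb{P}^1$-Lagrange interpolant of $\tilde p$ on $\mathcal{T}_N$ minus its mean (which lies in $M_N$ and has the same gradient as the interpolant), the standard first-order interpolation estimate on a regular mesh gives $\|\nabla(\tilde p - q)\|_{L^2(\Omega)^d}\le C h_{\mathcal{T}_N}\|\tilde p\|_{H^2(\Omega)}\le\varepsilon$ for $N$ large. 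Hence $\|\nabla(p-\mathcal{P}_{M_N}p)\|_{L^2(\Omega)^d}\le 2\varepsilon$ for $N$ large, and the Poincar\'e--Wirtinger inequality on $\Omega$ turns this into $\|p-\mathcal{P}_{M_N}p\|_{H^1(\Omega)}\to 0$. Collecting the four verifications proves the theorem.
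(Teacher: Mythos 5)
Your proof is correct and follows essentially the same route as the paper: the density citation for \eqref{hyp:PNexist}, the support-tracking argument via Lemma \ref{lem:pidsupport} to rule out the degenerate branch of \eqref{eq:defpiNTH} for small $h_{\mathcal{T}_N}$, Lemma \ref{lem:convpin} for the $H^1_0$ convergence, and the $\bfE$-norm bound from Lemma \ref{lem:estPinf}. The only difference is that you spell out the standard $\mathbb{P}^1$ interpolation and Poincar\'e--Wirtinger argument for \eqref{hyp:Mlim}, which the paper simply asserts as a known consequence of $\mathbb{P}^1$ convergence for the Neumann problem.
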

\begin{proof}
The density of $C^\infty_c(\Omega)^d\cap\bfV(\Omega)$ in  $H^1_0(\Omega)^d\cap \bfV(\Omega)$ for the $H^1_0(\Omega)^d$ norm  is proved in particular in \cite[Lemma IV.3.4 p. 249]{BoyerFabrie-book}.

Let $\bfv\in \bfW$. Applying \eqref{eq:defpiNTH}, we have that $\Pi_N\bfv\in \bfE_N$.

Indeed, if $\Pi_D ( \bfv - \Pi_L \bfv)\in H^1_0(\Omega)$, then $\Pi_D ( \bfv - \Pi_L \bfv)\in \bfX_{\!N} $ as well as $\Pi_L \bfv\in \bfX_{\!N} $. In this case, the application of Lemma \ref{lem:divpinzero} yields \eqref{hyp:PNexist}.

Otherwise, if $\Pi_D ( \bfv - \Pi_L \bfv)\notin H^1_0(\Omega)$, then $\Pi_N\bfv = 0\in \bfE_N  $ and  \eqref{hyp:PNexist} is also satisfied.

Let $\bfv\in C^\infty_c(\Omega)\cap\bfV(\Omega)$. Since the support of $\bfv$ is compact, there exists $a>0$ such that, for any $\bfx\in \partial\Omega$, $\bfv = 0$ on the ball $B(\bfx,a)$. Let $N_0 \ge 1$ be such that, for all $N\ge N_0$, $h_{\mathcal{T}_N} \le a/(\cter{cst:regumesh}+2)$.
 Let   $N\ge N_0$, and let us consider $\Pi_L$ defined by \eqref{eq:defpun} in the case  where $\mathcal{T} = \mathcal{T}_N$. Then, for any $\bfx\in \partial\Omega$, $\bfv  - \Pi_L\bfv = 0$ on the ball $B(\bfx,a - h_{\mathcal{T}_N})$.

We then deduce from Lemma \ref{lem:pidsupport} that, for any $\bfx\in\Omega$ with $B(\bfx, h_{\mathcal{T}_N})\cap \partial\Omega\neq\emptyset$, then $\bfv  - \Pi_L\bfv = 0$ on the ball $B(\bfx,\cter{cst:regumesh} h_{\mathcal{T}_N})$, which leads to $\Pi_D ( \bfv - \Pi_L \bfv)\in H^1_0(\Omega)$ for all $N\ge N_0$ in the case  where $\mathcal{T} = \mathcal{T}_N$. Therefore,  $\Pi_D ( \bfv - \Pi_L \bfv)\in H^1_0(\Omega)$ for all $N\ge N_0$ in the case  where $\mathcal{T} = \mathcal{T}_N$. This yields that, for any $N\ge N_0$, the conclusion of Lemma \ref{lem:convpin} holds. This proves \eqref{hyp:PNconv}.

Finally, \eqref{hyp:Mlim} is a consequence of the convergence of $\mathbb{P}^1$ finite elements for the Neumann problem with null average.
 
\end{proof}

\begin{remark}[A pair of finite element which does not fulfill the inf-sup condition]\label{rem:noninfsup}~\\
 Let $\omega>1$ be a given real and let $(\mathcal{T}_N)_{N\ge 1}$  be a sequence of simplicial meshes in the sense of this section, such that $(h_{\mathcal{T}_N})_{N\ge 1}$ converges to zero while  $(\theta_{\mathcal{T}_N})_{N\ge 1}$ remains bounded.  For any  $N\ge 1$, let $\widehat{\mathcal{T}}_N$ be the set of the elements of $\mathcal{T}_N$ whose distance to the boundary is greater than $\omega\ h_{\mathcal{T}_N}$. We then define $\displaystyle\Omega_N = \bigcup_{K\in \widehat{\mathcal{T}}_N} \overline{K}$. We consider $\mathbb{P}^1(\mathcal{T})$ finite element for the pressure, $\mathbb{P}^2(\mathcal{T})$ finite element for the velocity on any $K\in \widehat{\mathcal{T}}_N$, and  $\mathbb{P}^1(\mathcal{T})$  or transition finite element  as in Figure \ref{fig:transelement} for any $K\in \mathcal{T}_N\setminus\widehat{\mathcal{T}}_N$. We then define $\Pi_D$ only for functions whose support is included in $\Omega_N$ and we define $\Pi_N  : \bfW \to \bfE_N = \bfX_{\!N}\cap\bfVN\cap L^\infty(\Omega)^d  $, for any $\bfv \in \bfW$, by:
\begin{equation*}
 \hbox{if {\rm support}}(\bfv - \Pi_L \bfv)\subset \Omega_N,\hbox{ then }\Pi_N  \bfv = \Pi_L \bfv + \Pi_D ( \bfv - \Pi_L \bfv)\hbox{ else }\Pi_N  \bfv = 0.
\end{equation*}
 Following the proof of Theorem \ref{thm:hyppinok}, we can prove that this operator $\Pi_N$ fulfills the convergence assumptions \eqref{hyp:PNexist}-\eqref{hyp:Mlim} in Section \ref{sec:cvhyp}.
 \end{remark}

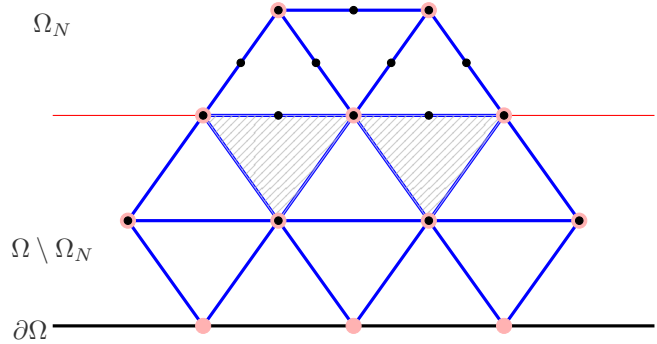
\begin{figure}[!ht]
  \begin{center}
  \begin{tikzpicture}[scale=1]
  
  \path(1,5) node[black!80]{$\Omega_N$};
   
  \path(1,2) node[black!80]{$\Omega\setminus\Omega_N$};
  
  \path(0.7,.95) node[black!80]{$\partial\Omega$};
   
  \draw[-, very thick, black](1,1)--(9,1);
  
  \draw[-, red](1,3.8)--(9,3.8);
   
  \draw[-, very thick, blue](3,1)--(2,2.4);
  \draw[-, very thick, blue](3,1)--(4,2.4);
  \draw[-, very thick, blue](5,1)--(4,2.4);
  \draw[-, very thick, blue](5,1)--(6,2.4);
  \draw[-, very thick, blue](7,1)--(6,2.4);
  \draw[-, very thick, blue](7,1)--(8,2.4);

  \draw[-, very thick, blue](2,2.4)--(4,2.4);
  \draw[-, very thick, blue](4,2.4)--(6,2.4);
  \draw[-, very thick, blue](6,2.4)--(8,2.4);
  \draw[-, very thick, blue](2,2.4)--(3,3.8);
  \draw[-, very thick, blue](4,2.4)--(3,3.8);
  \draw[-, very thick, blue](4,2.4)--(5,3.8);
  \draw[-, very thick, blue](6,2.4)--(5,3.8);
  \draw[-, very thick, blue](6,2.4)--(7,3.8);
  \draw[-, very thick, blue](8,2.4)--(7,3.8);

  \draw[-, very thick, blue](3,3.8)--(5,3.8);
  \draw[-, very thick, blue](5,3.8)--(7,3.8);
  \draw[-, very thick, blue](4,5.2)--(3,3.8);
  \draw[-, very thick, blue](4,5.2)--(5,3.8);
  \draw[-, very thick, blue](6,5.2)--(5,3.8);
  \draw[-, very thick, blue](6,5.2)--(7,3.8);

  \draw[-, very thick, blue](4,5.2)--(6,5.2);

   \draw[-,blue!20, pattern=north east lines, pattern color=black!30, opacity=0.8] (4,2.4)--(5,3.8)--(3,3.8)--(4,2.4);
   \draw[-,blue!20, pattern=north east lines, pattern color=black!30, opacity=0.8] (6,2.4)--(7,3.8)--(5,3.8)--(6,2.4);

   \draw[fill, red!30, opacity=1.](3,1) circle (0.1);
  \draw[fill, red!30, opacity=1.](5,1) circle (0.1);
  \draw[fill, red!30, opacity=1.](7,1) circle (0.1);
  \draw[fill, red!30, opacity=1.](2,2.4) circle (0.1);
  \draw[fill, red!30, opacity=1.](4,2.4) circle (0.1);
  \draw[fill, red!30, opacity=1.](6,2.4) circle (0.1);
  \draw[fill, red!30, opacity=1.](8,2.4) circle (0.1);
  \draw[fill, red!30, opacity=1.](3,3.8) circle (0.1);
  \draw[fill, red!30, opacity=1.](5,3.8) circle (0.1);
  \draw[fill, red!30, opacity=1.](7,3.8) circle (0.1);
  \draw[fill, red!30, opacity=1.](4,5.2) circle (0.1);
  \draw[fill, red!30, opacity=1.](6,5.2) circle (0.1);

  \draw[fill, black, opacity=1.](2,2.4) circle (0.05);
  \draw[fill, black, opacity=1.](4,2.4) circle (0.05);
  \draw[fill, black, opacity=1.](6,2.4) circle (0.05);
  \draw[fill, black, opacity=1.](8,2.4) circle (0.05);
  \draw[fill, black, opacity=1.](3,3.8) circle (0.05);
  \draw[fill, black, opacity=1.](5,3.8) circle (0.05);
  \draw[fill, black, opacity=1.](7,3.8) circle (0.05);
  \draw[fill, black, opacity=1.](4,5.2) circle (0.05);
  \draw[fill, black, opacity=1.](6,5.2) circle (0.05);

%
%
   
   \draw[fill, black, opacity=1.](4,3.8) circle (0.05);
   \draw[fill, black, opacity=1.](3.5,4.5) circle (0.05);
   \draw[fill, black, opacity=1.](6,3.8) circle (0.05);
   \draw[fill, black, opacity=1.](4.5,4.5) circle (0.05);
   \draw[fill, black, opacity=1.](5.5,4.5) circle (0.05);
   \draw[fill, black, opacity=1.](6.5,4.5) circle (0.05);

   \draw[fill, black, opacity=1.](5,5.2) circle (0.05);

  \end{tikzpicture}

  \caption{Finite element pair which does not fulfill the inf-sup condition. The degrees of freedom for the components of the velocity are in black, those for the pressure are in pink. The two hatched elements include 4 degrees of freedom for each of the components of the velocity.}\label{fig:transelement}
  \end{center}
\end{figure}

\section{Has the function $\bar\bfu$ provided by Theorem \ref{thm:cvscheme} some additional regularity?}\label{sec:further}

Theorem \ref{thm:cvscheme} shows the convergence of the incremental projection scheme to a weak solution  $\bar\bfu$ in the sense of Definition \ref{def:weaksol}. The question that $\bar\bfu$ be {\it suitable} in the sense introduced by Caffarelli-Kohn-Nirenberg in \cite{ckn1982par} arises. In \cite{guer2007faedo} and later in \cite{fer2018proof}, the authors proved that the weak solution, obtained as the limit of a semi-discrete or fully discrete numerical scheme coupling the velocity and the pressure under the existence of an inf-sup condition, is a suitable weak solution. Further works have to be done in order to determine whether this property also holds for $\bar\bfu$ obtained as the limit of the projection method.

\bibliographystyle{abbrvurl}

\end{document}